\documentclass[11pt,a4paper]{article}
\usepackage{amsmath,amssymb,amsthm,mathtools}
\usepackage{graphicx}
\usepackage{xcolor}
\usepackage{hyperref}
\usepackage{tikz-cd}
\usepackage{geometry}
\hypersetup{colorlinks=true,linkcolor=blue,citecolor=blue,urlcolor=blue}
\definecolor{binarygreen}{RGB}{0,105,70}
\definecolor{nonbinaryred}{RGB}{180,35,45}
\newtheorem{theorem}{Theorem}[section]
\newtheorem{proposition}[theorem]{Proposition}
\newtheorem{lemma}[theorem]{Lemma}
\newtheorem{corollary}[theorem]{Corollary}
\theoremstyle{definition}
\newtheorem{definition}[theorem]{Definition}
\newtheorem{example}[theorem]{Example}
\theoremstyle{remark}
\newtheorem{remark}[theorem]{Remark}
\newcommand{\Z}{\mathbb{Z}}

\newcommand{\F}{\mathbb{F}}
\newcommand{\LLang}{\mathcal{L}}
\newcommand{\eps}{\varepsilon}
\newcommand{\Afive}{\mathrm{A005187}}
\newcommand{\Afivefive}{\mathrm{A055938}}
\newcommand{\Aeight}{\mathrm{A080578}}
\newcommand{\Acon}{\mathrm{A046699}}
\newcommand{\Aonefiveoneone}{\mathrm{A001511}}
\newcommand{\Aoneonethreesevenone}{\mathrm{A011371}}
\newcommand{\Asevenninefivefivenine}{\mathrm{A079559}}
\title{Mersenne Representation, the Conolly Sequence, and Soliton Profiles over Finite Fields}
\author{Fumitaka YURA
\thanks{
Department of Mathematical Engineering,
Faculty of Engineering,
Musashino University,
3-3-3 Ariake,
Koto-ku,
Tokyo 135-8181,
Japan. 
E-mail: \texttt{f-yura@musashino-u.ac.jp}.
}}
\date{}

\begin{document}

\maketitle

\begin{abstract}
We study the Mersenne representation of nonnegative integers and its decomposition into the binary and nonbinary sides. The nonbinary values form A055938, and their successor structure gives a direct proof of the relation between A055938 and A080578 that is recorded as conjectural in the OEIS entry for A080578. The binary-side counting function is identified with a shifted Conolly sequence.

We then develop the parent map, truncation blocks, truncation remainders, and the Mersenne tau function associated with this representation. The parent map is conjugate to deletion of the lowest digit. Differences of the Mersenne tau rows recover the parent iterates and the counting function, and they give formulas for digit reconstruction and for a diagonal tau defect.

Finally, we revisit a known finite-depth one-soliton family of the finite-field BBS. The Mersenne combinatorics is used directly to reconstruct a global integer-valued traveling-wave profile and to prove an integer window-counting theorem. Reduction modulo \(3\) yields the corresponding finite-field traveling-wave solutions. We also construct an integer-valued traveling-wave tau function whose front values are given by the Mersenne tau rows. The resulting construction shows how the combinatorics of a number representation can enter directly into the reconstruction of solutions of an integrable system.
\end{abstract}

\medskip

\noindent\textbf{2020 Mathematics Subject Classification.}
Primary 11B83; Secondary 05A15, 37B15, 37K40, 37K60.

\medskip

\noindent\textbf{Keywords.}
Mersenne representation;
A005187;
A055938;
A080578;
A079559;
Conolly sequence;
A046699;
tau function;
finite-field BBS;
soliton.

\section{Introduction}

We consider a representation of the nonnegative integers based on the Mersenne weights \(2^{k}-1\). Its admissible words use the digits \(\mathtt{0}\), \(\mathtt{1}\), and \(\mathtt{2}\), where the digit \(\mathtt{2}\) forces all lower digits to be \(\mathtt{0}\). After unnecessary leading zeros are removed, every nonnegative integer has a unique admissible word. We call this the Mersenne representation. Our purpose is to develop its combinatorial and tau-function structures and to apply them to the construction of solitonic traveling-wave solutions of a finite-field box--ball system (BBS).

The words without the digit \(\mathtt{2}\) form a binary sublanguage whose values constitute A005187. The values of the remaining words constitute A055938. We construct a local successor transformation on the latter sublanguage. Its orbit enumerates A055938 in increasing order and gives a self-referential difference rule.

A080578 starts with \(a(1)=1\). For \(n\ge2\), its increment is \(1\) if \(n\) has already occurred among the preceding terms, and is \(3\) otherwise. In the terminology of Fokkink and Joshi, A080578 is the Cloitre \((0,1,1,3)\)-hiccup sequence \cite{FokkinkJoshi2026}. The entry for A080578 in the On-Line Encyclopedia of Integer Sequences (OEIS) records
\[
\Aeight(n)=\Afivefive(n-1)+2
\qquad(n\ge2)
\]
as conjectural \cite{OEISA080578}. We prove this relation directly from the successor structure of the nonbinary part of the Mersenne language.

The same OEIS entry records a relation between A080578 and the Conolly sequence, attributed to Cloitre \cite{OEISA080578}. Combining this relation with the A080578--A055938 relation proved below gives a formula for the increasing enumeration of the nonbinary side in terms of the Conolly sequence. Jackson and Ruskey related the first-occurrence structure of the Conolly sequence to A005187 through a binary-tree model \cite{JacksonRuskey2006}. In the present setting, this connection is expressed by identifying a shifted Conolly sequence with the counting function of the binary side. Thus the Conolly sequence describes both the increasing enumeration of the nonbinary side and the counting function of the binary side. We also give an independent frequency-based proof of the counting identity from the plateau structure.

The binary side defines an intrinsic counting function \(Z\). The associated parent map is the value-side form of digit deletion. Its iterates successively delete the lower digits of the Mersenne representation. The Mersenne digits can be recovered by evaluating \(Z\) along the parent orbit. We also determine the plateau lengths of \(Z\) and relate the plateau lengths and endpoint values to A001511 and A011371.

Appending trailing zeros defines a family of truncation thresholds. The intervals between consecutive thresholds are truncation blocks on which the corresponding parent iterate is constant. The truncation remainder is the value of the deleted lower-digit suffix. Its depth transition recovers the next Mersenne digit.

We then introduce the Mersenne tau function as a finite sum of parent iterates. Its first and second row differences recover the parent iterates and the counting function, respectively. A linear combination of four consecutive rows recovers each Mersenne digit. We also prove that the defect between diagonally adjacent second row differences is equal to the upper-sector digit associated with the truncation remainder. In the finite-depth application, this identity determines the core window residues.

In the final part, we revisit the nested finite-depth one-soliton family of the finite-field BBS established in \cite{Yura2014}. We include the velocity-\(2\) solution as the endpoint \(h=0\), but do not consider the velocity-\(0\) or velocity-\(1\) solutions.

The existence of this family was proved in \cite{Yura2014}. Here we give a different reconstruction based on the Mersenne representation. The distinctive feature of the present approach is that the combinatorial structure is used directly to construct the integer-valued profile and to prove the evolution rule; the finite-field solution is obtained only at the final stage by reduction modulo \(3\). We also construct an integer-valued traveling-wave tau function whose front values are given by the Mersenne tau rows.
Figure~\ref{fig:three-soliton-collision} shows a numerical evolution of the finite-field BBS variable \(\widehat U_{n}^{t}\in\F_{3}\). The evolution involves three nested finite-depth profiles, corresponding to \(h=0,1,2\). In this computation, the profiles reappear after their collisions. The present paper does not study multi-soliton collisions or prove collision stability. Instead, for each fixed \(h\), it first constructs a global integer-valued profile and then verifies that its reduction modulo \(3\) satisfies the traveling-wave equation. The corresponding one-soliton \(\widehat U\)-profile is obtained from \eqref{eq:U-from-S} in the free-propagation region, away from the collisions. The figure is included only to illustrate the finite-field BBS setting in which these one-soliton profiles arise.

\begin{figure}[t]
  \centering
  \includegraphics[width=\textwidth]{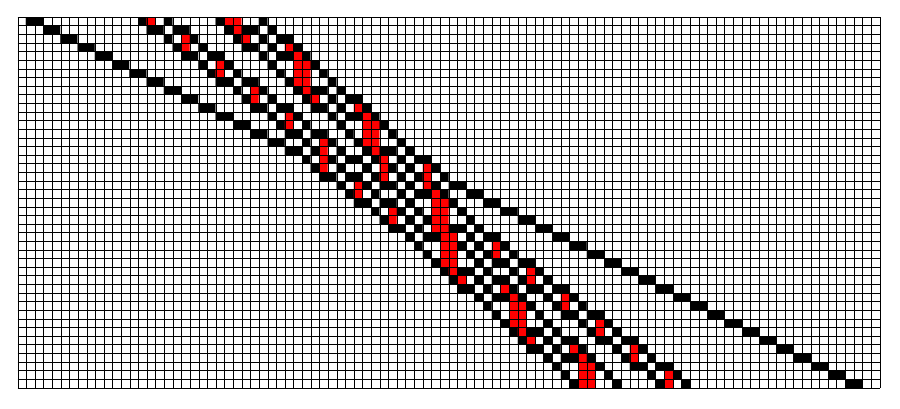}
  \caption{
  A numerical evolution of the finite-field BBS variable
  \(\widehat U_{n}^{t}\in\F_{3}\).
  The spatial coordinate \(n\) increases from left to right, and the
  time coordinate \(t\) increases from top to bottom.
  The initial configuration is obtained by placing sufficiently far
  apart the three one-soliton profiles corresponding to \(h=0,1,2\).
  The field values \(0\), \(1\), and \(2\) are shown in white,
  black, and red, respectively.
  The corresponding velocities are \(2\), \(4/3\), and \(8/7\),
  respectively.
  In this computation, the three profiles reappear after their
  collisions.
  The present paper describes the individual one-soliton
  \(\widehat U\)-profiles obtained from \eqref{eq:U-from-S} during free
  propagation, but does not study multi-soliton collisions or
  collision stability.
  }
  \label{fig:three-soliton-collision}
\end{figure}

The main results of this paper are as follows. First, the successor structure of the nonbinary Mersenne language gives a direct proof of the relation between A055938 and A080578 that is recorded as conjectural in the OEIS entry for A080578. It also gives a direct connection with the Conolly sequence. Second, the counting function and the associated parent, truncation, and tau-function structures provide digit-reconstruction and diagonal-difference formulas for the Mersenne representation. Third, for the finite-depth one-soliton family established in \cite{Yura2014}, these combinatorial structures give a new integer reconstruction and a window-counting proof that the reconstructed profiles satisfy the finite-field traveling-wave equation.

The paper is organized as follows. We first study the successor structure of the nonbinary Mersenne language and its connections with A055938, A080578, and the Conolly sequence. We then introduce the counting function and develop the parent map, digit reconstruction, plateau structure, and truncation blocks. Next, we introduce the Mersenne tau function and establish its row-difference, digit-recovery, and diagonal-defect formulas. We then prove the complement symmetry of the counting function. In the final part, we construct the finite-depth integer profile, prove the window-counting theorem, and reduce the result modulo \(3\) to obtain the finite-field traveling-wave solutions. We also give their tau-function realization and an example at depth \(h=2\).
\section{Mersenne Language}

\subsection{Words and value map}

Let \(\Sigma=\{\mathtt{0},\mathtt{1},\mathtt{2}\}\).  We identify each digit symbol with its digit value whenever it appears in an arithmetic expression.  A word \(x=d_{m}\cdots d_{1}\) is called a Mersenne word if
\[
d_{k}=\mathtt{2}\quad\Longrightarrow\quad d_{k-1}=d_{k-2}=\cdots=d_{1}=\mathtt{0}
\]
for every \(k\). In particular, a Mersenne word contains at most one occurrence of the digit \(\mathtt{2}\). We denote by \(\LLang_{m}\) the set of Mersenne words of length \(m\), and define \(\LLang:=\bigcup_{m\ge0}\LLang_{m}\). For \(m=0\), we set \(\LLang_{0}:=\{\eps\}\), where \(\eps\) denotes the empty word.

For \(x,y\in\LLang\), define
\[
x\sim y
\]
if there exist \(a,b\ge0\) such that
\[
\mathtt{0}^{a}x=\mathtt{0}^{b}y.
\]
Thus \(x\sim y\) means that \(x\) and \(y\) differ only by leading zeros. Each equivalence class contains a unique word without a leading zero, except that the class of zero contains the empty word \(\eps\) as its canonical representative. Unless a fixed length is specified, we use this canonical representative.

For \(x=d_{m}\cdots d_{1}\), define
\[
C(x):=\sum_{k=1}^{m}d_{k}(2^{k}-1),
\qquad
P(x):=\sum_{k=1}^{m}d_{k},
\qquad
B(x):=\sum_{k=1}^{m}d_{k}2^{k-1}.
\]
We call \(P(x)\) the Mersenne digit sum of \(x\), and call \(B(x)\) the binary companion of \(x\). For \(m=0\), these are empty sums, and hence \(C(\eps)=P(\eps)=B(\eps)=0\). Thus \(\eps\) is the canonical Mersenne representation of \(0\). We write \(C^{-1}(r)\) for the canonical representative of \(r\).

Define the deletion map by
\[
\delta(d_{m}\cdots d_{1}):=d_{m}\cdots d_{2},
\qquad
\delta^{\ell}(d_{m}\cdots d_{1})=d_{m}\cdots d_{\ell+1}.
\]
If \(\ell\ge m\), all digits are deleted and the result is \(\eps\). We also set \(\delta(\eps)=\eps\).

\begin{proposition}\label{prop:CBP}
For every Mersenne word \(x\), one has
\[
C(x)=2B(x)-P(x),
\]
and
\[
B(x)=C(x)-C(\delta(x)),
\qquad
P(x)=C(x)-2C(\delta(x)).
\]
\end{proposition}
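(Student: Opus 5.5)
The three identities are purely linear in the digits, so the plan is to verify them termwise from the definitions of \(C\), \(P\), \(B\), and \(\delta\); the Mersenne admissibility condition plays no role. The first identity follows by expanding \(2^{k}-1\):
\[
C(x)=\sum_{k=1}^{m}d_{k}(2^{k}-1)=2\sum_{k=1}^{m}d_{k}2^{k-1}-\sum_{k=1}^{m}d_{k}=2B(x)-P(x).
\]
The case \(x=\eps\) is covered because all three quantities are empty sums equal to \(0\).

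For the second identity I will write \(\delta(x)=d_{m}\cdots d_{2}\) as a word of length \(m-1\) whose \(j\)-th digit from the right is \(d_{j+1}\). Then
\[
C(\delta(x))=\sum_{k=2}^{m}d_{k}(2^{k-1}-1).
\]
Subtracting termwise, the \(k\)-th term of \(C(x)-C(\delta(x))\) is \(d_{k}\bigl((2^{k}-1)-(2^{k-1}-1)\bigr)=d_{k}2^{k-1}\) for \(k\ge2\). The \(k=1\) term is \(d_{1}(2^{1}-1)=d_{1}=d_{1}2^{0}\). Summing these gives \(B(x)\).

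For the third identity, the \(k\)-th term of \(C(x)-2C(\delta(x))\) is \(d_{k}\bigl(2^{k}-1-2^{k}+2\bigr)=d_{k}\) for \(k\ge2\), and the \(k=1\) term is \(d_{1}\). Summing gives \(P(x)\). Alternatively, this follows by combining the first two identities: \(P=2B-C=2(C-C\circ\delta)-C=C-2C\circ\delta\).

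The only point needing care is that \(\delta(x)\) is well defined up to \(\sim\), and that \(C\) is invariant under leading zeros. This holds because a leading zero contributes \(0\) to the sum. The edge cases \(m\le1\) are also covered, since there \(\delta(x)=\eps\) and \(C(\eps)=0\).
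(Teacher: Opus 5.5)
Your proposal is correct and follows essentially the same route as the paper: you expand \(2^{k}-1=2\cdot2^{k-1}-1\) for the first identity, compute \(C(\delta(x))=\sum_{k=2}^{m}d_{k}(2^{k-1}-1)\) and subtract for the second, and obtain the third termwise or from \(C=2B-P\). One small remark: \(\delta\) is defined directly on words in the paper, so there is no well-definedness issue up to \(\sim\) to address, though your comment is harmless.
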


\begin{proof}
Write \(x=d_{m}\cdots d_{1}\).  Since \(2^{k}-1=2\cdot2^{k-1}-1\), the first identity follows by summing over \(k\).  Also
\[
C(\delta(x))=\sum_{k=2}^{m}d_{k}(2^{k-1}-1),
\]
and hence
\[
C(x)-C(\delta(x))=d_{1}+\sum_{k=2}^{m}d_{k}2^{k-1}=B(x).
\]
The identity for \(P\) follows from \(C=2B-P\).
\end{proof}

The fixed-length bijection below was proved in Proposition~6 of \cite{Yura2014}. We include a short proof adapted to the present notation.
\begin{proposition}\label{prop:fixed-bijection}
For every \(m\ge0\), the fixed-length value map
\[
C_{m}:\LLang_{m}\longrightarrow\{0,1,\ldots,2^{m+1}-2\}
\]
is bijective.  Hence every nonnegative integer has a unique Mersenne word after removing leading zeros.
\end{proposition}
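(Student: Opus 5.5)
We argue by induction on \(m\), decomposing \(\LLang_{m}\) according to its leading digit \(d_{m}\). The base case \(m=0\) is immediate: \(\LLang_{0}=\{\eps\}\), \(C_{0}(\eps)=0\), and the target set is \(\{0\}\). For \(m\ge1\), a word in \(\LLang_{m}\) has one of three forms. If \(d_{m}=\mathtt{0}\), it is \(\mathtt{0}y\) with \(y\in\LLang_{m-1}\) arbitrary. If \(d_{m}=\mathtt{1}\), it is \(\mathtt{1}y\) with \(y\in\LLang_{m-1}\) arbitrary; the admissibility condition imposes nothing on \(y\) here, since the digit \(\mathtt{1}\) places no constraint on lower digits. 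If \(d_{m}=\mathtt{2}\), it is the single word \(\mathtt{2}\mathtt{0}^{m-1}\). So \(\LLang_{m}\) is the disjoint union of \(\mathtt{0}\LLang_{m-1}\), \(\mathtt{1}\LLang_{m-1}\), and \(\{\mathtt{2}\mathtt{0}^{m-1}\}\).

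Next we compute the images of these three pieces. By the definition of \(C\), we have \(C(\mathtt{0}y)=C(y)\) and \(C(\mathtt{1}y)=(2^{m}-1)+C(y)\). The induction hypothesis says \(C_{m-1}\) is a bijection onto \(\{0,\ldots,2^{m}-2\}\). Hence the first piece maps bijectively onto \(\{0,\ldots,2^{m}-2\}\), and the second maps bijectively onto \(\{2^{m}-1,\ldots,2^{m+1}-3\}\). The third piece has value \(2(2^{m}-1)=2^{m+1}-2\). These three target sets are consecutive, disjoint, and their union is exactly \(\{0,\ldots,2^{m+1}-2\}\). Therefore \(C_{m}\) is a bijection, which completes the induction.

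For the second assertion, let \(r\ge0\) and choose \(m\) with \(r\le 2^{m+1}-2\). The bijectivity of \(C_{m}\) gives a unique word of length \(m\) with value \(r\). Uniqueness up to leading zeros comes from compatibility across lengths. Prepending \(\mathtt{0}\) preserves both admissibility and value. Conversely, if a word in \(\LLang_{m+1}\) has value at most \(2^{m+1}-2\), the three-way decomposition forces its leading digit to be \(\mathtt{0}\). So the length-\(m\) preimage of \(r\) and its length-\((m+1)\) preimage differ by one leading zero. Hence any two Mersenne words with value \(r\) are \(\sim\)-equivalent, and the canonical representative is unique.

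There is no real obstacle here. The only point needing care is checking that the constraint ``\(\mathtt{2}\) forces lower zeros'' does not restrict the suffix \(y\) after a leading \(\mathtt{1}\) or \(\mathtt{0}\). The constraint only concerns positions below an occurrence of \(\mathtt{2}\), and any such occurrence lies inside \(y\), where it is already handled by \(y\in\LLang_{m-1}\). As a sanity check, the counts satisfy \(|\LLang_{m}|=2|\LLang_{m-1}|+1=2^{m+1}-1\).
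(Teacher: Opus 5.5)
Your proposal is correct and follows essentially the same route as the paper: induction on \(m\), splitting \(\LLang_{m}\) into three sectors by the highest digit, whose images are three consecutive disjoint ranges covering \(\{0,\ldots,2^{m+1}-2\}\). Your treatment of uniqueness after removing leading zeros is more explicit than the paper's one-line remark that increasing the length only adds leading zeros, but the underlying argument is the same.
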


\begin{proof}
We argue by induction on \(m\). For \(m=0\), one has
\[
\LLang_{0}=\{\eps\},
\qquad
C(\eps)=0.
\]
Suppose that the assertion holds for length \(m\). A word of length \(m+1\) belongs to exactly one of three sectors according to its highest digit.

If the highest digit is \(\mathtt{0}\), the remaining \(m\) digits form an arbitrary word \(v\in\LLang_{m}\), and
\[
C(\mathtt{0}v)=C(v).
\]
By the induction hypothesis, this sector represents each integer in
\[
0\le r\le2^{m+1}-2
\]
exactly once.

If the highest digit is \(\mathtt{1}\), the remaining digits again form an arbitrary word \(v\in\LLang_{m}\), and
\[
C(\mathtt{1}v)=(2^{m+1}-1)+C(v).
\]
Hence this sector represents each integer in
\[
2^{m+1}-1\le r\le2^{m+2}-3
\]
exactly once.

If the highest digit is \(\mathtt{2}\), the defining condition of the Mersenne language forces all lower digits to be \(\mathtt{0}\). Thus this sector consists only of \(\mathtt{2}\mathtt{0}^{m}\), whose value is
\[
2(2^{m+1}-1)=2^{m+2}-2.
\]
The three ranges are disjoint and together form
\[
\{0,1,\ldots,2^{m+2}-2\}.
\]
Therefore \(C_{m+1}\) is bijective.

Finally, increasing the fixed length only adds leading zeros. Hence, after leading zeros are removed, every nonnegative integer has a unique Mersenne word.
\end{proof}

It follows from Proposition~\ref{prop:fixed-bijection} that, for \(x,y\in\LLang\),
\[
x\sim y
\quad\Longleftrightarrow\quad
C(x)=C(y).
\]
Moreover, leading-zero equivalence is compatible with digit deletion: for \(x\sim y\) and \(\ell\ge0\),
\[
\delta^{\ell}(x)\sim\delta^{\ell}(y).
\]
It is also compatible with admissible right concatenation: if \(x\sim y\), \(v\in\LLang\), and both \(xv\) and \(yv\) belong to \(\LLang\), then
\[
xv\sim yv.
\]

For \(m\ge0\), define
\[
C_{m}^{-1}\colon\{0,1,\ldots,2^{m+1}-2\}\longrightarrow\LLang_{m}
\]
to be the inverse of the fixed-length bijection \(C_{m}\).

\subsection{The sublanguages}
Define
\[
J:=\{x\in\LLang:x\text{ contains the digit }\mathtt{2}\},
\qquad
\overline J:=\LLang\setminus J.
\]
The sequence \(\Afive\) is obtained by arranging in increasing order all finite sums
\[
\sum_{k\ge1}\epsilon_{k}(2^{k}-1),
\qquad
\epsilon_{k}\in\{0,1\},
\]
where only finitely many \(\epsilon_{k}\) are nonzero and the empty sum is \(0\) \cite{OEIS}.  The sequence \(\Afivefive\) is obtained by arranging in increasing order the complement of this value set in \(\Z_{\ge0}\) \cite{OEIS}.

A word in \(\overline J\) has only the digits \(\mathtt{0}\) and \(\mathtt{1}\), so its value is a finite sum of distinct Mersenne weights. Conversely, every such finite sum is represented by a word in \(\overline J\). Therefore, the increasing enumeration of \(C(\overline J)\) is \(\Afive\). By Proposition~\ref{prop:fixed-bijection}, every nonnegative integer has a unique canonical Mersenne representation. Since \(J\) and \(\overline J\) form a partition of \(\LLang\), the value sets \(C(J)\) and \(C(\overline J)\) form a partition of \(\Z_{\ge0}\). Therefore, the increasing enumeration of \(C(J)\) is \(\Afivefive\). We refer to \(\overline J\) as the binary sublanguage and to \(C(\overline J)\) as the binary side.

Let
\[
\chi_{\overline J}:\Z_{\ge0}\longrightarrow\{0,1\}
\]
be the characteristic function of the binary-side value set \(C(\overline J)\), defined by
\[
\chi_{\overline J}(n)=
\begin{cases}
1,&n\in C(\overline J),\\
0,&n\notin C(\overline J).
\end{cases}
\]
\begin{corollary}[Binary-side generating function]\label{cor:binary-generating}
As a formal power series,
\[
\sum_{n\ge0}\chi_{\overline J}(n)X^{n}
=
\sum_{w\in\overline J}X^{C(w)}
=
\prod_{m\ge1}\left(1+X^{2^{m}-1}\right).
\]
Thus the characteristic sequence \((\chi_{\overline J}(n))_{n\ge0}\) is OEIS \(\Asevenninefivefivenine\) \cite{OEIS}.
\end{corollary}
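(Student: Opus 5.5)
The plan is to prove the two equalities separately: the first by injectivity of \(C\) on canonical words, the second by expanding the product. The OEIS identification then follows by comparing with the defining description of \(\Asevenninefivefivenine\).

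For the first equality, restrict to canonical words, i.e. words without a leading zero, with \(\eps\) representing \(0\); this is the convention of Proposition~\ref{prop:fixed-bijection}. By that proposition, \(C\) is injective on canonical words. The set \(\overline J\) is closed under \(\sim\), because leading zeros do not introduce the digit \(\mathtt{2}\). So the canonical words in \(\overline J\) are in bijection with their values, and these values form exactly the set \(C(\overline J)\). Hence \(\sum_{w\in\overline J}X^{C(w)}\), taken over canonical representatives, contains each \(n\in C(\overline J)\) exactly once and no other exponent. This is precisely \(\sum_{n}\chi_{\overline J}(n)X^{n}\).

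For the second equality, note that a canonical word in \(\overline J\) of length \(m\) is a \(0/1\) string \(d_m\cdots d_1\) with \(d_m=\mathtt{1}\), or the empty word. Such words correspond bijectively to finite subsets \(S\subset\Z_{\ge1}\) via \(S=\{k:d_k=\mathtt{1}\}\). Under this correspondence \(C(w)=\sum_{k\in S}(2^{k}-1)\). Expanding the product \(\prod_{m\ge1}(1+X^{2^{m}-1})\) yields exactly \(\sum_{S}X^{\sum_{k\in S}(2^k-1)}\). This infinite product is a well-defined formal power series, because each factor is \(1+O(X^{2^m-1})\) and \(2^m-1\to\infty\). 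Consequently, for each \(N\) only the finitely many factors with \(2^m-1\le N\) affect the coefficients of \(X^{n}\) with \(n\le N\).

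The only delicate point is that every coefficient of this expansion is \(0\) or \(1\), that is, that distinct subsets \(S\) give distinct sums. I will obtain this from the uniqueness in Proposition~\ref{prop:fixed-bijection}, since \(0/1\) words are Mersenne words, rather than by a separate argument. Finally, OEIS \(\Asevenninefivefivenine\) is described as the characteristic function of \(\Afive\), or equivalently by the coefficients of this product. Since the increasing enumeration of \(C(\overline J)\) is \(\Afive\), as established above, the identification follows.
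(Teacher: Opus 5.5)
Your proposal is correct and follows essentially the same route as the paper: you sum over canonical words, expand the product by selecting each weight \(2^{m}-1\) at most once, and use the uniqueness in Proposition~\ref{prop:fixed-bijection} to show that distinct binary words have distinct values, so every coefficient is \(0\) or \(1\). The details you add, namely that \(\overline J\) is closed under \(\sim\) and that the infinite product is a well-defined formal power series, are sound and are left implicit in the paper.
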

\begin{proof}
The sum over \(\overline J\) is taken over canonical words without unnecessary leading zeros. A word in \(\overline J\) has only the digits \(\mathtt{0}\) and \(\mathtt{1}\). At position \(m\), the weight \(2^{m}-1\) is either omitted or selected once. Uniqueness of the Mersenne representation shows that distinct binary Mersenne words have distinct values.
\end{proof}

We next describe the action of digit deletion on the nonbinary sublanguage. Let \(\delta_{J}\) be the restriction of the deletion map \(\delta\) to \(J\):
\[
\delta_{J}:=\delta|_{J}:J\longrightarrow\LLang.
\]
Thus,
\[
\delta_{J}(x)=\delta(x)
\qquad(x\in J).
\]

\begin{proposition}\label{prop:deltaJ-bijection}
The map \(\delta_{J}:J\to\LLang\) is bijective. Its inverse assigns to each word the unique one-digit extension that belongs to \(J\). Explicitly,
\[
\delta_{J}^{-1}(x)=
\begin{cases}
 x\mathtt{2},& x\in\overline J,\\
 x\mathtt{0},& x\in J.
\end{cases}
\]
\end{proposition}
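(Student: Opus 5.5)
The plan is to prove the statement by exhibiting the proposed inverse map explicitly and checking both compositions, working at the level of words.

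One point needs care before starting. The sets \(J\) and \(\LLang\) contain words of every length, not equivalence classes. I would read the statement with canonical representatives, which is the paper's convention: \(\LLang\) is taken up to \(\sim\), and \(\delta\) is compatible with \(\sim\). Concretely, let \(x\in\LLang\) be a canonical word, possibly \(\eps\). We have \(\eps\in\overline J\), and its image is \(\eps\mathtt{2}=\mathtt{2}\), which is canonical and lies in \(J\).

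\textbf{Step 1 (the proposed inverse lands in \(J\)).} Let \(\phi\) denote the right-hand side of the displayed formula. If \(x\in\overline J\), then \(x\) has no digit \(\mathtt{2}\). So in \(x\mathtt{2}\) the only \(\mathtt{2}\) is in position \(1\), and the admissibility condition for that \(\mathtt{2}\) is vacuous. Every other digit of \(x\mathtt{2}\) is \(\mathtt{0}\) or \(\mathtt{1}\), so \(x\mathtt{2}\in\LLang\), and it contains \(\mathtt{2}\), so \(x\mathtt{2}\in J\).

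If \(x\in J\), write \(x=d_m\cdots d_1\) with \(d_k=\mathtt{2}\). Then all of \(d_{k-1},\dots,d_1\) are \(\mathtt{0}\). Appending a \(\mathtt{0}\) keeps every digit below that \(\mathtt{2}\) equal to zero, so \(x\mathtt{0}\in J\). In both cases, appending on the right does not create leading zeros, so canonicity is preserved.

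\textbf{Step 2 (\(\delta_J\circ\phi=\mathrm{id}\)).} This is immediate, because \(\delta\) removes exactly the appended digit.

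\textbf{Step 3 (\(\phi\circ\delta_J=\mathrm{id}\), the only real content).} Take \(y=d_m\cdots d_1\in J\), and split into cases on the lowest digit \(d_1\).

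\emph{Case \(d_1=\mathtt{2}\).} Then \(y\) contains at most one \(\mathtt{2}\), so \(\delta(y)=d_m\cdots d_2\in\overline J\). Hence \(\phi(\delta(y))=\delta(y)\mathtt{2}=y\).

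\emph{Case \(d_1=\mathtt{1}\).} This cannot occur. The \(\mathtt{2}\) in \(y\) sits at some position \(k>1\), and admissibility would force \(d_1=\mathtt{0}\).

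\emph{Case \(d_1=\mathtt{0}\).} The \(\mathtt{2}\) lies at some position \(k\ge2\), so \(\delta(y)\) still contains it and \(\delta(y)\in J\). Hence \(\phi(\delta(y))=\delta(y)\mathtt{0}=y\).

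These cases also give the uniqueness phrasing in the statement. The one-digit extensions \(x\mathtt{0}\), \(x\mathtt{1}\), \(x\mathtt{2}\) of a given \(x\) fall into the same pattern. For \(x\in\overline J\), the extensions \(x\mathtt{0}\) and \(x\mathtt{1}\) lie in \(\overline J\). For \(x\in J\), the extensions \(x\mathtt{1}\) and \(x\mathtt{2}\) are not admissible. So exactly one extension lies in \(J\).

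\textbf{Expected obstacle.} There is no real mathematical difficulty here. The only delicate point is the bookkeeping of leading zeros and the empty word, that is, making sure \(\delta_J\) is well defined on canonical representatives. For example, \(\delta(\mathtt{2})=\eps\) must be treated as canonical. I would handle this by a single remark that \(\delta\) and right concatenation respect \(\sim\), as noted after Proposition~\ref{prop:fixed-bijection}.
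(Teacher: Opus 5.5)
Your proof is correct and follows essentially the same route as the paper. The paper exhibits the extension $x\mathtt{2}$ or $x\mathtt{0}$, observes that $\delta$ removes it, and uses admissibility to see that the appended digit is forced; it compresses that last step into ``this also gives uniqueness,'' which you spell out as the case analysis on the lowest digit in Step~3, and the leading-zero bookkeeping you flag is harmless (the same verification works verbatim on words of any fixed length).
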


\begin{proof}
If \(x\in\overline J\), then \(x\mathtt{2}\) is a Mersenne word in \(J\), and \(\delta(x\mathtt{2})=x\).  If \(x\in J\), then the appended digit must be \(\mathtt{0}\), and \(\delta(x\mathtt{0})=x\).  This also gives uniqueness.
\end{proof}

\section{The Successor Structure of A055938}

Recall that the increasing enumeration of the nonbinary-side value set \(C(J)\) is A055938. We write
\[
C(J)=\{y_{1}<y_{2}<y_{3}<\cdots\},
\]
so that
\[
y_{n}=\Afivefive(n)\qquad(n\ge1).
\]
Thus, \(y_{n}\) is the \(n\)-th smallest nonbinary-side value. We construct a successor map on \(J\) whose orbit realizes this enumeration.

\subsection{A local transformation}

Every \(x\in J\) contains a unique digit \(\mathtt{2}\), and all digits below it are \(\mathtt{0}\). The digit immediately above it is either \(\mathtt{0}\) or \(\mathtt{1}\). If \(\mathtt{2}\) is the highest digit, we temporarily adjoin one leading zero. The resulting word is equivalent to \(x\) under \(\sim\) and has the same value. After this temporary padding, the word has a unique expression of one of the forms
\[
u\mathtt{0}\mathtt{2}\mathtt{0}^{m},
\qquad
u\mathtt{1}\mathtt{2}\mathtt{0}^{m},
\]
where \(m\ge0\). Define \(\mathcal T:J\to J\) by the corresponding local replacements
\[
u\mathtt{0}\mathtt{2}\mathtt{0}^{m}\longmapsto u\mathtt{1}\mathtt{0}^{m}\mathtt{2},
\]
\[
u\mathtt{1}\mathtt{2}\mathtt{0}^{m}\longmapsto u\mathtt{2}\mathtt{0}^{m+1}.
\]
Both resulting words belong to \(J\). After the replacement, we remove any unnecessary leading zero.
\begin{lemma}[Value increment under the local transformation]\label{lem:T-increments}
For every \(x\in J\),
\[
C(\mathcal T(x))-C(x)
=
\begin{cases}
3,&\text{in the }u\mathtt{0}\mathtt{2}\mathtt{0}^{m}\text{ case},\\
1,&\text{in the }u\mathtt{1}\mathtt{2}\mathtt{0}^{m}\text{ case}.
\end{cases}
\]
\end{lemma}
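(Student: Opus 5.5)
The plan is a direct computation of \(C\) on the two sides of each local replacement. Since \(C\) is additive over digit positions and the prefix \(u\) keeps the same positions in both words, its contribution cancels in the difference. The temporary leading zero and the removal of unnecessary leading zeros do not change values, because \(x\sim y\) implies \(C(x)=C(y)\) by Proposition~\ref{prop:fixed-bijection}.

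Fix \(m\ge0\) and compare only the last \(m+2\) positions. In both replaced words the digit immediately above the block \(\mathtt{2}\mathtt{0}^{m}\) sits at position \(m+2\), and the digit \(\mathtt{2}\) sits at position \(m+1\).

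\emph{First case.} The word \(u\mathtt{0}\mathtt{2}\mathtt{0}^{m}\) contributes
\[
2(2^{m+1}-1)=2^{m+2}-2
\]
from its last \(m+2\) positions. In the image \(u\mathtt{1}\mathtt{0}^{m}\mathtt{2}\), the digit \(\mathtt{1}\) is at position \(m+2\) and the digit \(\mathtt{2}\) is at position \(1\). These contribute
\[
(2^{m+2}-1)+2(2^{1}-1)=2^{m+2}+1.
\]
The difference is \(3\).

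\emph{Second case.} The word \(u\mathtt{1}\mathtt{2}\mathtt{0}^{m}\) contributes
\[
(2^{m+2}-1)+2(2^{m+1}-1)=2^{m+3}-3.
\]
The image \(u\mathtt{2}\mathtt{0}^{m+1}\) has its \(\mathtt{2}\) at position \(m+2\), giving
\[
2(2^{m+2}-1)=2^{m+3}-2.
\]
The difference is \(1\).

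The only real point needing care is the bookkeeping of positions, in particular confirming that \(u\) occupies identical positions in source and target. This holds because both words have length \(|u|+m+2\). One also needs that the padding does not affect values, which was settled above. Apart from that, the argument is pure arithmetic.
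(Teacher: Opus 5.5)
Your proposal is correct and is essentially the paper's proof: both compare only the displayed local parts, which have equal length so the contribution of \(u\) cancels, and compute the differences \(3\) and \(1\) by direct arithmetic. One small remark: invariance of \(C\) under adding or removing leading zeros follows immediately from the definition of \(C\), since a leading zero contributes nothing, so you do not need Proposition~\ref{prop:fixed-bijection} for that step.
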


\begin{proof}
Since equivalent words under \(\sim\) have the same value, it is enough to compare the displayed local parts. In the first case, \(\mathtt{0}\mathtt{2}\mathtt{0}^{m}\longmapsto\mathtt{1}\mathtt{0}^{m}\mathtt{2}\), and the difference is
\[
(2^{m+2}-1)+2(2^{1}-1)-2(2^{m+1}-1)=3.
\]
In the second case, \(\mathtt{1}\mathtt{2}\mathtt{0}^{m}\longmapsto\mathtt{2}\mathtt{0}^{m+1}\), and the difference is
\[
2(2^{m+2}-1)-\bigl((2^{m+2}-1)+2(2^{m+1}-1)\bigr)=1.
\]
The displayed local words have the same length, so the positions and contributions of the digits in \(u\) are unchanged.
\end{proof}

\begin{lemma}\label{lem:intermediate-values}
Let \(x\in J\), and suppose that its temporarily padded form is \(u\mathtt{0}\mathtt{2}\mathtt{0}^{m}\). Then
\[
C^{-1}(C(x)+1)=u\mathtt{1}\mathtt{0}^{m+1},
\]
and
\[
C^{-1}(C(x)+2)=u\mathtt{1}\mathtt{0}^{m}\mathtt{1}.
\]
In particular,
\[
C(x)+1,\ C(x)+2\in C(\overline J).
\]
\end{lemma}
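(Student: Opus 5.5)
We verify that the two displayed words are admissible and have the claimed values. Canonical representatives are unique by Proposition~\ref{prop:fixed-bijection}, so this suffices.

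\textbf{Admissibility.} Since \(x\in J\) contains a single digit \(\mathtt{2}\), namely the one displayed in \(u\mathtt{0}\mathtt{2}\mathtt{0}^{m}\), the prefix \(u\) contains no \(\mathtt{2}\). Hence \(u\in\overline J\). It follows that \(u\mathtt{1}\mathtt{0}^{m+1}\) and \(u\mathtt{1}\mathtt{0}^{m}\mathtt{1}\) consist only of the digits \(\mathtt{0}\) and \(\mathtt{1}\). Such words are automatically Mersenne words, and they lie in \(\overline J\).

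\textbf{Values.} The local parts \(\mathtt{0}\mathtt{2}\mathtt{0}^{m}\), \(\mathtt{1}\mathtt{0}^{m+1}\) and \(\mathtt{1}\mathtt{0}^{m}\mathtt{1}\) all have length \(m+2\). So, as in the proof of Lemma~\ref{lem:T-increments}, the digits of \(u\) occupy the same positions and contribute equally, and it is enough to compare the local parts.
\begin{itemize}
\item The local part of \(x\) contributes \(2(2^{m+1}-1)=2^{m+2}-2\).
\item \(\mathtt{1}\mathtt{0}^{m+1}\) contributes \(2^{m+2}-1\), which is larger by \(1\).
\item \(\mathtt{1}\mathtt{0}^{m}\mathtt{1}\) contributes \((2^{m+2}-1)+(2^{1}-1)=2^{m+2}\), which is larger by \(2\).
\end{itemize}
Hence
\[
C(u\mathtt{1}\mathtt{0}^{m+1})=C(x)+1,\qquad C(u\mathtt{1}\mathtt{0}^{m}\mathtt{1})=C(x)+2.
\]

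\textbf{Canonicity and conclusion.} By Proposition~\ref{prop:fixed-bijection}, each value has a unique word up to leading zeros. The leading zero temporarily adjoined to \(x\) only matters when \(u\) is empty or begins with \(\mathtt{0}\), and removing unnecessary leading zeros leaves the \(\sim\)-class unchanged. We therefore read the two identities as equalities of canonical representatives after such zeros are removed, which is the convention fixed after Proposition~\ref{prop:fixed-bijection}.

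Since both words lie in \(\overline J\), the values \(C(x)+1\) and \(C(x)+2\) belong to \(C(\overline J)\). This is the final assertion.

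The only point needing care is the leading-zero convention: the padded form may begin with \(\mathtt{0}\), so the equalities hold up to \(\sim\). Apart from that, the proof is a direct computation.
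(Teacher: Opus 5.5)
Your proof is correct and is essentially the paper's argument. The paper also notes that \(u\) contains no \(\mathtt{2}\) and writes \(C(x)=C(u\mathtt{0}^{m+2})+2(2^{m+1}-1)\). It then checks that adding \(1\) or \(2\) gives \(C(u\mathtt{1}\mathtt{0}^{m+1})\) and \(C(u\mathtt{1}\mathtt{0}^{m}\mathtt{1})\), and concludes by uniqueness of the Mersenne representation.

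Your leading-zero caveat is harmless but slightly overcautious. For canonical \(x\), the prefix \(u\) is either empty or begins with \(\mathtt{1}\), so both resulting words are already canonical.
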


\begin{proof}
The displayed digit \(\mathtt{2}\) is the unique occurrence of \(\mathtt{2}\) in the temporarily padded form, and hence \(u\) contains no digit \(\mathtt{2}\).  Since the temporarily padded word is equivalent to \(x\), the definition of \(C\) gives
\[
C(x)=C(u\mathtt{0}^{m+2})+2(2^{m+1}-1).
\]
Therefore
\[
C(x)+1=C(u\mathtt{0}^{m+2})+(2^{m+2}-1)=C(u\mathtt{1}\mathtt{0}^{m+1}).
\]
Similarly,
\[
C(x)+2=C(u\mathtt{0}^{m+2})+2^{m+2}
=C(u\mathtt{1}\mathtt{0}^{m}\mathtt{1}).
\]
The uniqueness of the Mersenne representation gives the asserted inverse images.  Both words are in \(\overline J\).
\end{proof}

\begin{proposition}\label{prop:successor-orbit}
Let \(S_{0}:=\mathtt{2}\in\LLang_{1}\), and define recursively
\[
S_{n+1}:=\mathcal T(S_{n})
\qquad(n\ge0).
\]
Then
\[
C(S_{n})=y_{n+1}
\]
for every \(n\ge0\).  Moreover,
\[
C(S_{n})=B(S_{n})+n.
\]
\end{proposition}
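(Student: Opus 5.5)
We prove both assertions by induction on \(n\), using Lemmas~\ref{lem:T-increments} and~\ref{lem:intermediate-values} to show that \(\mathcal T\) moves from one element of \(C(J)\) to the next one, skipping only binary-side values.

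\emph{Base case.} We have \(S_{0}=\mathtt{2}\) and \(C(S_{0})=2\). The values \(0=C(\eps)\) and \(1=C(\mathtt{1})\) lie in \(C(\overline J)\), and \(2\in C(J)\). Hence \(y_{1}=2=C(S_{0})\). Moreover \(B(S_{0})=2\), so \(C(S_{0})=B(S_{0})+0\).

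\emph{Inductive step for the enumeration.} Assume \(C(S_{n})=y_{n+1}\). Since \(\mathcal T\) maps \(J\) into \(J\), the value \(C(S_{n+1})\) lies in \(C(J)\) and exceeds \(C(S_{n})\). It remains to show that no element of \(C(J)\) lies strictly between them. Lemma~\ref{lem:T-increments} gives two cases.
\begin{itemize}
\item In the \(u\mathtt{1}\mathtt{2}\mathtt{0}^{m}\) case the increment is \(1\), so there is nothing in between.
\item In the \(u\mathtt{0}\mathtt{2}\mathtt{0}^{m}\) case the increment is \(3\). The two intermediate values \(C(S_{n})+1\) and \(C(S_{n})+2\) lie in \(C(\overline J)\) by Lemma~\ref{lem:intermediate-values}.
\end{itemize}
Because \(C(J)\) and \(C(\overline J)\) are disjoint, this follows from the uniqueness in Proposition~\ref{prop:fixed-bijection}. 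Hence \(C(S_{n+1})\) is the smallest element of \(C(J)\) above \(y_{n+1}\), that is, \(C(S_{n+1})=y_{n+2}\). This step needs only the already-established lemmas.

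\emph{Identity \(C(S_{n})=B(S_{n})+n\).} We show that \(C-B\) increases by exactly \(1\) under each application of \(\mathcal T\). By Proposition~\ref{prop:CBP},
\[
C-B=B-P=\sum_{k} d_{k}\bigl(2^{k-1}-1\bigr).
\]
Equivalently, it suffices to show \(B(\mathcal T x)-B(x)=C(\mathcal T x)-C(x)-1\). As in Lemma~\ref{lem:T-increments}, we compute on the local parts, since the digits of \(u\) keep their positions and cancel.
\begin{itemize}
\item In the first case, \(\mathtt{0}\mathtt{2}\mathtt{0}^{m}\mapsto\mathtt{1}\mathtt{0}^{m}\mathtt{2}\). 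The change in \(B\) is
\[
2^{m+1}+2-2\cdot2^{m}=2,
\]
which equals \(3-1\).
\item In the second case, \(\mathtt{1}\mathtt{2}\mathtt{0}^{m}\mapsto\mathtt{2}\mathtt{0}^{m+1}\). The change in \(B\) is
\[
2\cdot2^{m+1}-\bigl(2^{m+1}+2\cdot2^{m}\bigr)=0,
\]
which equals \(1-1\).
\end{itemize}
Thus \(C-B\) increases by \(1\) at each step. Since \((C-B)(S_{0})=0\), we get \(C(S_{n})-B(S_{n})=n\).

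Leading-zero padding and removal do not affect \(B\), \(C\) or \(P\), so the local computations are legitimate. The main obstacle is only the bookkeeping of exponents in these local \(B\)-differences, which we have checked above.
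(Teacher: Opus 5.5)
Your proof is correct and takes essentially the same route as the paper. You use induction with Lemma~\ref{lem:T-increments} and Lemma~\ref{lem:intermediate-values} to show that \(\mathcal T\) steps to the next element of \(C(J)\), and you compute the local \(B\)-increments \(2\) and \(0\) to show that \(C-B\) grows by exactly one per step; your explicit base-case check and written-out \(B\)-differences just fill in details the paper states without computation.
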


\begin{proof}
By Lemma~\ref{lem:T-increments}, the map \(\mathcal T\) increases the value by \(1\) or \(3\).  If the increment is \(1\), there is no integer between \(C(x)\) and \(C(\mathcal T(x))\).  If the increment is \(3\), the two intermediate values belong to \(C(\overline J)\) by Lemma~\ref{lem:intermediate-values}.  Hence \(C(\mathcal T(x))\) is the next value in \(C(J)\) after \(C(x)\).  Since \(C(S_{0})=2\) is the smallest element of \(C(J)\), induction gives \(C(S_{n})=y_{n+1}\).

At \(S_{0}=\mathtt{2}\), one has \(C(S_{0})-B(S_{0})=0\).  In the first local replacement, the \(C\)-increment is \(3\) and the \(B\)-increment is \(2\).  In the second, the \(C\)-increment is \(1\) and the \(B\)-increment is \(0\).  Hence \(C(S_{n})-B(S_{n})\) increases by one at each step, and the asserted identity follows.
\end{proof}

Thus, the successor orbit \(S_{0},S_{1},S_{2},\ldots\) is the word-side realization of the increasing sequence \(y_{1},y_{2},y_{3},\ldots\).

\subsection{The coordinate map}

We next introduce a natural integer coordinate on \(J\). For a word in \(J\), we delete its lowest digit and take the value of the resulting Mersenne word. This coordinate will identify the position of the original word in the successor orbit. Define
\[
\Phi:=C\circ\delta_{J}:J\longrightarrow\Z_{\ge0}.
\]

\begin{proposition}\label{prop:coordinate-map}
For every \(n\ge0\),
\[
\Phi(S_{n})=n.
\]
Consequently,
\[
S_{n}=\Phi^{-1}(n)=\delta_{J}^{-1}(C^{-1}(n)).
\]
Moreover,
\[
\Phi(\mathcal T(x))=\Phi(x)+1
\]
for every \(x\in J\).
\end{proposition}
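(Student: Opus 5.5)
I will first prove the last assertion, \(\Phi(\mathcal T(x))=\Phi(x)+1\) for every \(x\in J\), by a direct local computation. The other two assertions then follow quickly.

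For the increment identity, I will treat the two cases of the local transformation separately. Since leading zeros change neither values nor deletion up to \(\sim\), I may work with the temporarily padded form.

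In the first case, \(x=u\mathtt{0}\mathtt{2}\mathtt{0}^{m}\) and \(\mathcal T(x)=u\mathtt{1}\mathtt{0}^{m}\mathtt{2}\).
\begin{itemize}
\item If \(m\ge1\), then \(\delta(x)=u\mathtt{0}\mathtt{2}\mathtt{0}^{m-1}\) and \(\delta(\mathcal T(x))=u\mathtt{1}\mathtt{0}^{m}\). Their values differ by
\[
(2^{m+1}-1)-2(2^{m}-1)=1.
\]
\item If \(m=0\), then \(\delta(x)=u\mathtt{0}\) and \(\delta(\mathcal T(x))=u\mathtt{1}\), whose values again differ by \(2^{1}-1=1\).
\end{itemize}

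In the second case, \(x=u\mathtt{1}\mathtt{2}\mathtt{0}^{m}\) and \(\mathcal T(x)=u\mathtt{2}\mathtt{0}^{m+1}\).
\begin{itemize}
\item If \(m\ge1\), then \(\delta(x)=u\mathtt{1}\mathtt{2}\mathtt{0}^{m-1}\) and \(\delta(\mathcal T(x))=u\mathtt{2}\mathtt{0}^{m}\). The difference is the second-case computation of Lemma~\ref{lem:T-increments} with \(m\) replaced by \(m-1\), namely \(1\).
\item If \(m=0\), then \(\delta(x)=u\mathtt{1}\) and \(\delta(\mathcal T(x))=u\mathtt{2}\), whose values differ by \(1\).
\end{itemize}

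In every case the digits of \(u\) keep their positions, because the two local parts have equal length. A shortcut that may unify these computations is Proposition~\ref{prop:CBP}, which gives \(\Phi(x)=C(x)-B(x)\). The first case has \((\Delta C,\Delta B)=(3,2)\) and the second has \((1,0)\), as already noted in the proof of Proposition~\ref{prop:successor-orbit}. Both give \(\Delta\Phi=1\), and this argument avoids the subcase \(m=0\). I will use this version as the main argument.

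Next, \(\Phi(S_0)=C(\delta(\mathtt{2}))=C(\eps)=0\). Induction with the increment identity then gives \(\Phi(S_n)=n\). Alternatively, Proposition~\ref{prop:successor-orbit} gives \(C(S_n)-B(S_n)=n\) directly, and this equals \(\Phi(S_n)\) by Proposition~\ref{prop:CBP}.

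For the formula \(S_n=\Phi^{-1}(n)\), I need \(\Phi\) to be injective on \(J\). This holds because \(\Phi=C\circ\delta_J\) is a composition of bijections up to \(\sim\): \(\delta_J\) is bijective by Proposition~\ref{prop:deltaJ-bijection}, and \(C\) is bijective on canonical words by Proposition~\ref{prop:fixed-bijection}. Hence \(\Phi^{-1}(n)=\delta_J^{-1}(C^{-1}(n))\), and since \(\Phi(S_n)=n\), this word is \(S_n\).

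The only delicate point is bookkeeping with canonical representatives. The word \(\delta_J^{-1}(C^{-1}(n))\) must be taken canonically, and temporary padding must not affect \(\delta\). Both hold by the stated compatibility of \(\sim\) with deletion and with right concatenation.
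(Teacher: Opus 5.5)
Your proof is correct and uses the same ingredients as the paper: \(\Phi=C-B\) from Proposition~\ref{prop:CBP}, the increments \((\Delta C,\Delta B)=(3,2)\) and \((1,0)\) under the two local replacements, and bijectivity of \(C\circ\delta_J\). The only difference is the order: the paper first reads off \(\Phi(S_n)=C(S_n)-B(S_n)=n\) from Proposition~\ref{prop:successor-orbit} and then obtains \(\Phi(\mathcal T(x))=\Phi(x)+1\) because bijectivity makes every \(x\in J\) equal to some \(S_n\), whereas you prove the increment identity locally for arbitrary \(x\in J\) and get \(\Phi(S_n)=n\) by induction, so your last claim does not depend on the orbit exhausting \(J\).
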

\begin{proof}
The map \(\Phi\) is bijective. Propositions~\ref{prop:CBP} and \ref{prop:successor-orbit} give
\[
\Phi(S_{n})=C(\delta_{J}(S_{n}))=C(S_{n})-B(S_{n})=n.
\]
The formula for \(S_{n}\) follows from bijectivity. Every \(x\in J\) has the unique form \(x=S_{n}\), and hence
\[
\Phi(\mathcal T(x))=\Phi(S_{n+1})=n+1=\Phi(x)+1.
\]
\end{proof}
Let
\[
\mathcal S=\Phi^{-1}:\Z_{\ge0}\longrightarrow J,
\qquad
\mathcal S(n)=S_{n}.
\]
The preceding relations are summarized by the following commutative diagram:
\[
\begin{tikzcd}[column sep=large,row sep=large]
n
  \arrow[r,"{\textstyle C\circ\mathcal S}"]
  \arrow[dr,"{\textstyle\mathcal S}" description]
&
C(S_{n})=y_{n+1}=\Afivefive(n+1)
\\
C^{-1}(n)=\delta_{J}(S_{n})
  \arrow[u,"{\textstyle C}"']
&
S_{n}
  \arrow[l,"{\textstyle\delta_{J}}"]
  \arrow[u,"{\textstyle C}"']
\end{tikzcd}
\]
The lower-left vertex \(C^{-1}(n)=\delta_{J}(S_{n})\) is the canonical Mersenne word representing \(n\). The lower-right vertex \(S_{n}\) is the unique word in \(J\) obtained from \(C^{-1}(n)\) by \(\delta_{J}^{-1}\). In particular, the diagram expresses
\[
C\circ\delta_{J}\circ\mathcal S
=
\operatorname{id}_{\Z_{\ge0}}.
\]
Since \(C\circ\delta_{J}:J\to\Z_{\ge0}\) is bijective, its inverse is the successor-orbit map:
\[
\mathcal S=(C\circ\delta_{J})^{-1}.
\]
\begin{example}\label{ex:coordinate-values}
The first values occurring in the preceding diagram and the corresponding decomposition into the binary and nonbinary sides are shown in Table~\ref{tab:coordinate-values}. Here \(\mathcal S(n)=S_{n}=\delta_{J}^{-1}(C^{-1}(n))\), and \(C(S_{n})=y_{n+1}\). In the third and fourth columns, the value \(n\) is placed in the side to which it belongs. For \(n\ge1\), the difference column displays \(y_{n+1}-y_{n}\): the value \(\textcolor{binarygreen}{\mathbf{3}}\) indicates the binary side, while \(\textcolor{nonbinaryred}{\mathbf{1}}\) indicates the nonbinary side.
\end{example}

\begin{table}[t]
\centering
\caption{The first coordinate values, the decomposition into \(C(\overline J)\) and \(C(J)\), the successor differences, and the successor orbit.}
\label{tab:coordinate-values}
\small
\begin{tabular}{c|c|c|c|c|c|c}
\hline
\(n\) & \(C^{-1}(n)=\delta_{J}(S_{n})\) & \(n\in C(\overline J)\) & \(n\in C(J)\) & \(y_{n+1}-y_{n}\) & \(S_{n}\) & \(C(S_{n})=y_{n+1}\) \\
\hline
0 & \(\eps\) & \(0\) & -- & -- & \(\mathtt{2}\) & \(2\) \\
1 & \(\mathtt{1}\) & \(1\) & -- & \(\textcolor{binarygreen}{\mathbf{3}}\) & \(\mathtt{12}\) & \(5\) \\
2 & \(\mathtt{2}\) & -- & \(2\) & \(\textcolor{nonbinaryred}{\mathbf{1}}\) & \(\mathtt{20}\) & \(6\) \\
3 & \(\mathtt{10}\) & \(3\) & -- & \(\textcolor{binarygreen}{\mathbf{3}}\) & \(\mathtt{102}\) & \(9\) \\
4 & \(\mathtt{11}\) & \(4\) & -- & \(\textcolor{binarygreen}{\mathbf{3}}\) & \(\mathtt{112}\) & \(12\) \\
5 & \(\mathtt{12}\) & -- & \(5\) & \(\textcolor{nonbinaryred}{\mathbf{1}}\) & \(\mathtt{120}\) & \(13\) \\
6 & \(\mathtt{20}\) & -- & \(6\) & \(\textcolor{nonbinaryred}{\mathbf{1}}\) & \(\mathtt{200}\) & \(14\) \\
7 & \(\mathtt{100}\) & \(7\) & -- & \(\textcolor{binarygreen}{\mathbf{3}}\) & \(\mathtt{1002}\) & \(17\) \\
8 & \(\mathtt{101}\) & \(8\) & -- & \(\textcolor{binarygreen}{\mathbf{3}}\) & \(\mathtt{1012}\) & \(20\) \\
9 & \(\mathtt{102}\) & -- & \(9\) & \(\textcolor{nonbinaryred}{\mathbf{1}}\) & \(\mathtt{1020}\) & \(21\) \\
10 & \(\mathtt{110}\) & \(10\) & -- & \(\textcolor{binarygreen}{\mathbf{3}}\) & \(\mathtt{1102}\) & \(24\) \\
11 & \(\mathtt{111}\) & \(11\) & -- & \(\textcolor{binarygreen}{\mathbf{3}}\) & \(\mathtt{1112}\) & \(27\) \\
12 & \(\mathtt{112}\) & -- & \(12\) & \(\textcolor{nonbinaryred}{\mathbf{1}}\) & \(\mathtt{1120}\) & \(28\) \\
13 & \(\mathtt{120}\) & -- & \(13\) & \(\textcolor{nonbinaryred}{\mathbf{1}}\) & \(\mathtt{1200}\) & \(29\) \\
14 & \(\mathtt{200}\) & -- & \(14\) & \(\textcolor{nonbinaryred}{\mathbf{1}}\) & \(\mathtt{2000}\) & \(30\) \\
15 & \(\mathtt{1000}\) & \(15\) & -- & \(\textcolor{binarygreen}{\mathbf{3}}\) & \(\mathtt{10002}\) & \(33\) \\
\hline
\end{tabular}
\end{table}

The second and final columns display the identities
\[
\delta_{J}(S_{n})=C^{-1}(n),
\qquad
C(S_{n})=y_{n+1}.
\]
For \(n\ge1\), the difference column shows that \(y_{n+1}-y_{n}=3\) on the binary side and \(y_{n+1}-y_{n}=1\) on the nonbinary side. This pattern is formulated below as the self-referential difference rule.

\subsection{The self-referential difference rule}

Let \(\chi_{J}\) be the characteristic function of the value set \(C(J)\):
\[
\chi_{J}(n)=
\begin{cases}
1,& n\in C(J),\\
0,& n\notin C(J).
\end{cases}
\]
Since \(C(J)\) and \(C(\overline J)\) form a partition of \(\Z_{\ge0}\), one has
\[
\chi_{J}(n)=1-\chi_{\overline J}(n)
\qquad(n\ge0).
\]

\begin{proposition}\label{prop:A055938-difference}
For every \(n\ge1\),
\[
y_{n+1}-y_{n}=3-2\chi_{J}(n).
\]
\end{proposition}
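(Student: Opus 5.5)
We use the successor orbit. By Proposition~\ref{prop:successor-orbit}, \(y_{n}=C(S_{n-1})\) and \(y_{n+1}=C(S_{n})=C(\mathcal T(S_{n-1}))\). Lemma~\ref{lem:T-increments} then shows that \(y_{n+1}-y_{n}\in\{1,3\}\). The increment is \(3\) exactly when the temporarily padded form of \(S_{n-1}\) is \(u\mathtt{0}\mathtt{2}\mathtt{0}^{m}\), and \(1\) exactly when it is \(u\mathtt{1}\mathtt{2}\mathtt{0}^{m}\). So it suffices to show that, for \(n\ge1\), the digit directly above the \(\mathtt{2}\) in \(S_{n-1}\) is \(\mathtt{1}\) if and only if \(n\in C(J)\).

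To decide this we compare \(S_{n-1}\) with \(S_{n}\), using Proposition~\ref{prop:coordinate-map}: \(\delta_J(S_n)=C^{-1}(n)\), so \(n\in C(J)\) if and only if \(\delta_J(S_n)\) contains \(\mathtt{2}\). By Proposition~\ref{prop:deltaJ-bijection}, this is equivalent to \(S_n=C^{-1}(n)\mathtt{0}\), i.e. the lowest digit of \(S_n\) is \(\mathtt{0}\). Likewise \(n\in C(\overline J)\) if and only if \(S_n\) ends in \(\mathtt{2}\).

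Now read off the lowest digit of \(S_n=\mathcal T(S_{n-1})\) from the two local replacements.
\begin{itemize}
\item In the case \(u\mathtt{0}\mathtt{2}\mathtt{0}^{m}\mapsto u\mathtt{1}\mathtt{0}^{m}\mathtt{2}\), the lowest digit is \(\mathtt{2}\), so \(n\in C(\overline J)\). Here the increment is \(3=3-2\cdot0\).
\item In the case \(u\mathtt{1}\mathtt{2}\mathtt{0}^{m}\mapsto u\mathtt{2}\mathtt{0}^{m+1}\), the lowest digit is \(\mathtt{0}\) since \(m+1\ge1\), so \(n\in C(J)\). Here the increment is \(1=3-2\cdot1\).
\end{itemize}
The two cases exhaust \(J\), and \(\chi_J(n)\) is determined by which case occurs, so the formula follows.

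The only delicate point is the role of leading-zero padding and canonical representatives. The replacement \(\mathcal T\) may introduce or remove a leading zero, but this does not affect the lowest digit or membership in \(J\), since \(\sim\) preserves both. We must also check that \(\delta_J(S_n)\) and \(C^{-1}(n)\) agree as canonical words. This is what Proposition~\ref{prop:coordinate-map} gives, and \(\delta\) is compatible with \(\sim\). Finally, the restriction \(n\ge1\) is used only to ensure that \(S_{n-1}\) exists.
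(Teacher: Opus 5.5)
Your proof is correct and follows the same route as the paper: both use Proposition~\ref{prop:coordinate-map} to write \(S_n=\delta_J^{-1}(C^{-1}(n))\). Its lowest digit (\(\mathtt{0}\) or \(\mathtt{2}\)) records whether \(n\in C(J)\), and that digit also tells which local replacement produced \(S_n\), hence whether the increment is \(1\) or \(3\). You additionally state explicitly a step the paper leaves implicit: the first replacement always outputs a word ending in \(\mathtt{2}\), and the second always outputs a word ending in \(\mathtt{0}\) (because \(m+1\ge1\)).
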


\begin{proof}
The step from \(S_{n-1}\) to \(S_{n}\) is the step in which the \(\Phi\)-coordinate goes from \(n-1\) to \(n\).  By Proposition~\ref{prop:coordinate-map}, \(S_{n}=\delta_{J}^{-1}(C^{-1}(n))\).  If \(n\in C(J)\), then \(C^{-1}(n)\in J\), and \(\delta_{J}^{-1}\) appends \(\mathtt{0}\); this is the increment \(1\) case.  If \(n\notin C(J)\), then \(C^{-1}(n)\in\overline J\), and \(\delta_{J}^{-1}\) appends \(\mathtt{2}\); this is the increment \(3\) case.  Hence
\[
y_{n+1}-y_{n}=
\begin{cases}
1,& n\in C(J),\\
3,& n\notin C(J),
\end{cases}
\]
which is the asserted formula.
\end{proof}

\begin{corollary}\label{cor:A055938-counting}
For \(1\le a<b\),
\[
\sum_{k=a}^{b-1}\chi_{J}(k)=\frac{3}{2}(b-a)-\frac{y_{b}}{2}+\frac{y_{a}}{2}.
\]
In particular,
\[
\sum_{k=1}^{n}\chi_{J}(k)=\frac{3n+2-y_{n+1}}{2}.
\]
\end{corollary}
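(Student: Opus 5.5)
The corollary follows by telescoping the self-referential difference rule of Proposition~\ref{prop:A055938-difference}. For every \(k\ge1\) we have \(y_{k+1}-y_{k}=3-2\chi_{J}(k)\), which we solve for the indicator:
\[
\chi_{J}(k)=\frac{3-(y_{k+1}-y_{k})}{2}.
\]

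Summing this identity over \(k=a,a+1,\ldots,b-1\) (with \(1\le a<b\), so every index satisfies \(k\ge1\) and the proposition applies), the constant term contributes \(\tfrac32(b-a)\). The differences \(y_{k+1}-y_{k}\) telescope to \(y_{b}-y_{a}\). This gives
\[
\sum_{k=a}^{b-1}\chi_{J}(k)=\frac32(b-a)-\frac{y_{b}}{2}+\frac{y_{a}}{2},
\]
which is the first assertion.

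For the special case, take \(a=1\) and \(b=n+1\) with \(n\ge1\). By Proposition~\ref{prop:successor-orbit}, \(y_{1}=C(S_{0})=2\). Substituting gives \(\tfrac32 n-\tfrac{y_{n+1}}{2}+1=\tfrac{3n+2-y_{n+1}}{2}\).

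There is no real obstacle in this argument. The only points to check are that the summation range stays within \(k\ge1\), where the difference rule is valid, and that the initial value \(y_{1}=2\) is correctly identified.
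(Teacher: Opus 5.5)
Your proposal is correct and follows the paper's own approach: both sum the difference rule $y_{k+1}-y_{k}=3-2\chi_{J}(k)$ from Proposition~\ref{prop:A055938-difference} over $a\le k\le b-1$ and telescope. You also correctly check that every index satisfies $k\ge1$, and your specialization to $a=1$, $b=n+1$ correctly uses $y_{1}=C(S_{0})=2$.
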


\begin{proof}
Sum Proposition~\ref{prop:A055938-difference} over the indicated range.
\end{proof}

\section{A080578 and the Conolly Sequence}

\subsection{The A080578--A055938 Relation}

We use the definition of A080578 recorded in the OEIS entry \cite{OEISA080578}. The initial value is \(a(1)=1\). For \(n>1\), the increment is \(1\) if \(n\) occurs among the preceding terms, and is \(3\) otherwise.

Recall that \((y_{n})_{n\ge1}\) is A055938, equivalently, the increasing enumeration of \(C(J)\). Define
\[
a_{1}:=1,
\qquad
a_{n}:=y_{n-1}+2\quad(n\ge2).
\]

\begin{theorem}[A080578--A055938 relation]\label{thm:A080578-A055938}
For every \(n\ge2\),
\[
\Aeight(n)=\Afivefive(n-1)+2.
\]
Equivalently,
\[
\Afivefive(n)=\Aeight(n+1)-2\qquad(n\ge1).
\]
\end{theorem}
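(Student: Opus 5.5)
We will prove that the sequence \((a_{n})_{n\ge1}\) defined just before the theorem satisfies the defining recursion of \(\Aeight\). Since that recursion determines a sequence uniquely from \(a(1)=1\), this gives \(a_{n}=\Aeight(n)\) for all \(n\), and hence the statement. Concretely, we must show that for every \(n\ge2\),
\[
a_{n}-a_{n-1}=
\begin{cases}
1,& n\in\{a_{1},\ldots,a_{n-1}\},\\
3,& n\notin\{a_{1},\ldots,a_{n-1}\}.
\end{cases}
\]

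First we check the base step \(n=2\). Here \(a_{2}=y_{1}+2=4\) and \(a_{2}-a_{1}=3\). Since \(2\ne a_{1}=1\), the rule also prescribes \(3\), so the two agree.

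For \(n\ge3\), we have \(a_{n}-a_{n-1}=y_{n-1}-y_{n-2}\). By Proposition~\ref{prop:A055938-difference} with index \(n-2\ge1\), this difference equals \(3-2\chi_{J}(n-2)\). It therefore suffices to prove
\[
n\in\{a_{1},\ldots,a_{n-1}\}\iff n-2\in C(J)\qquad(n\ge3).
\]
Since \(a_{1}=1<3\), the left side is equivalent to \(n=y_{k-1}+2\) for some \(2\le k\le n-1\). That is, \(n-2=y_{j}\) for some \(1\le j\le n-2\). If this holds, then \(n-2\in C(J)\). Conversely, suppose \(n-2\in C(J)\), say \(n-2=y_{j}\). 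The values \(y_{j}\) are strictly increasing positive integers with \(y_{1}=2\), so \(y_{j}\ge j+1\), and hence \(j\le n-3\le n-2\).

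The main point needing care is exactly this index bound: a value \(n\) that eventually appears in the sequence must already have appeared among the first \(n-1\) terms. The growth bound \(y_{j}\ge j+1\) gives this. Strictly speaking, one should apply the recursion inductively on \(n\): assume \(a_{k}=\Aeight(k)\) for \(k<n\), so that "preceding terms" of \(\Aeight\) coincide with \(a_{1},\ldots,a_{n-1}\). The equivalence above then yields \(a_{n}=\Aeight(n)\), which is the theorem.
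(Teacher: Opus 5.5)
Your proposal is correct and follows the paper's proof essentially step for step. It uses the same auxiliary sequence $a_n=y_{n-1}+2$, the same reduction via Proposition~\ref{prop:A055938-difference} to the equivalence $n\in\{a_1,\ldots,a_{n-1}\}\iff n-2\in C(J)$, and the same growth bound $y_k\ge k+1$ to place the occurrence among the preceding terms; your explicit induction identifying the preceding terms of $\Aeight$ with $a_1,\ldots,a_{n-1}$ simply spells out the paper's remark that the rule determines the sequence recursively.
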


\begin{proof}
We show that \((a_{n})_{n\ge1}\) satisfies the defining rule of A080578. First, \(a_{1}=1\). Since \(y_{1}=2\), one has \(a_{2}=4\), and hence \(a_{2}-a_{1}=3\). The integer \(2\) does not occur among the preceding terms, since the only preceding term is \(a_{1}=1\). Thus the defining rule holds for \(n=2\).

Let \(n\ge3\). By Proposition~\ref{prop:A055938-difference},
\[
a_{n}-a_{n-1}=y_{n-1}-y_{n-2}=3-2\chi_{J}(n-2).
\]
We claim that
\[
n\in\{a_{1},\ldots,a_{n-1}\}\quad\Longleftrightarrow\quad n-2\in C(J).
\]
If \(a_{m}=n\) for some \(m<n\), then \(m\ge2\), so \(n-2=y_{m-1}\in C(J)\). Conversely, suppose that \(n-2\in C(J)\). Since \((y_{k})_{k\ge1}\) is the increasing enumeration of \(C(J)\), there is a unique \(k\ge1\) such that \(n-2=y_{k}\). Then \(a_{k+1}=n\). Since \(y_{1}=2\) and \((y_{k})_{k\ge1}\) is a strictly increasing integer sequence, one has \(y_{k}\ge k+1\). Therefore,
\[
k+1\le y_{k}=n-2<n,
\]
so this occurrence is among the preceding terms. Consequently,
\[
a_{n}-a_{n-1}=
\begin{cases}
1,&n\in\{a_{1},\ldots,a_{n-1}\},\\
3,&n\notin\{a_{1},\ldots,a_{n-1}\}.
\end{cases}
\]
Thus \((a_{n})\) has the initial value and increment rule defining A080578. Since this rule determines the sequence recursively, \(a_{n}=\Aeight(n)\). Finally, \(y_{n-1}=\Afivefive(n-1)\), which proves the theorem. The equivalent formula follows by replacing \(n\) with \(n+1\).
\end{proof}

\begin{remark}\label{rem:OEIS-conjecture}
The OEIS entry for A080578 records the relation \(\Aeight(n)=\Afivefive(n-1)+2\) as conjectural \cite{OEISA080578}. The preceding theorem proves this relation by using the Mersenne successor structure.
\end{remark}

\subsection{The Conolly sequence}

Let \((z_{n})_{n\ge1}\) be the Conolly sequence, defined by
\[
z_{1}=z_{2}=1,
\qquad
z_{n}=z_{n-z_{n-1}}+z_{n-1-z_{n-2}}
\quad(n\ge3).
\]
This is OEIS A046699 \cite{Conolly1989,OEIS}. We use the following relation, attributed to Cloitre in the OEIS entry for A080578 and treated in the general theory of hiccup sequences \cite{OEISA080578,FokkinkJoshi2026}:
\[
\frac{\Aeight(n)-n}{2}=\Acon(n)
\qquad(n\ge2).
\]

Combining Theorem~\ref{thm:A080578-A055938} with this relation gives the following formula.
\begin{corollary}[Conolly representation of A055938]\label{cor:A055938-Conolly}
For every \(n\ge1\),
\[
y_{n}=2z_{n+1}+n-1.
\]
\end{corollary}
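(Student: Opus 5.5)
The corollary should follow by composing Theorem~\ref{thm:A080578-A055938} with the Cloitre relation quoted just above it. No new combinatorics is needed; the only care required is with the index shift and the range of validity.

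Fix \(n\ge1\). Apply Theorem~\ref{thm:A080578-A055938} in its equivalent form, \(\Afivefive(n)=\Aeight(n+1)-2\) for \(n\ge1\). This gives \(y_{n}=\Aeight(n+1)-2\). Since \(n+1\ge2\), the Cloitre relation \((\Aeight(m)-m)/2=\Acon(m)\) may be used with \(m=n+1\). It gives \(\Aeight(n+1)=2z_{n+1}+n+1\). Substituting, we get
\[
y_{n}=2z_{n+1}+n+1-2=2z_{n+1}+n-1,
\]
which is the claim.

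The only step needing attention is the index bookkeeping. The Cloitre relation is stated only for \(m\ge2\), and the substitution \(m=n+1\) stays in that range for every \(n\ge1\). As a sanity check, for \(n=1\) we have \(z_{2}=1\), so the formula gives \(2\cdot1+0=2=y_{1}\). For \(n=2\) we have \(z_{3}=z_{2}+z_{1}=2\), so the formula gives \(4+1=5=y_{2}\), in agreement with Table~\ref{tab:coordinate-values}.

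The Cloitre relation is taken from the literature as stated in the paper and is not reproved here.
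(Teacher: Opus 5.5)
Your proposal is correct and is the paper's proof: you combine Theorem~\ref{thm:A080578-A055938} in the form \(\Aeight(n+1)=y_{n}+2\) with the quoted Cloitre relation at index \(n+1\ge2\) and solve for \(y_{n}\). Your attention to the range \(n+1\ge2\) and your sanity checks at \(n=1,2\) are accurate but not needed beyond what the paper does.
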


\begin{proof}
By Theorem~\ref{thm:A080578-A055938}, \(\Aeight(n+1)=y_{n}+2\).  On the other hand,
\[
\frac{\Aeight(n+1)-(n+1)}{2}=z_{n+1}.
\]
Thus \(y_{n}+2=2z_{n+1}+n+1\), which gives the formula.
\end{proof}

\subsection{The Successor Orbit and Its Counting Interpretation}
Recall that \(S_{n}\) is the \(n\)-th word in the successor orbit and
\[
C(S_{n})=y_{n+1}.
\]
Corollary~\ref{cor:A055938-Conolly} therefore gives \(C(S_{n})=2z_{n+2}+n\). Together with Proposition~\ref{prop:successor-orbit}, this gives
\[
B(S_{n})=2z_{n+2},
\qquad
P(S_{n})=2z_{n+2}-n.
\]
Thus the Mersenne digit sum along the successor orbit is expressed by the Conolly sequence.

Combining the difference rule with \(y_{n}=2z_{n+1}+n-1\), we get
\[
2(z_{n+2}-z_{n+1})+1=3-2\chi_{J}(n),
\]
and hence
\[
z_{n+2}-z_{n+1}=1-\chi_{J}(n)=\chi_{\overline J}(n).
\]
Summing from \(1\) to \(n\) gives
\[
\#\{1\le k\le n:k\in C(\overline J)\}=z_{n+2}-1.
\]
This formula gives the counting interpretation used below.

\section{The Intrinsic Counting Function \(Z\)}

Define
\[
Z(r)
:=
\#\{0\le k<r:k\in C(\overline J)\}
=
\sum_{k=0}^{r-1}\chi_{\overline J}(k)
\qquad(r\ge0),
\]
where the sum is empty for \(r=0\).

This definition uses only the binary side of the Mersenne language and is independent of the relation with the Conolly sequence. We therefore call \(Z\) the intrinsic counting function of the Mersenne language. The uppercase \(Z\) distinguishes this function from the Conolly sequence \((z_{n})_{n\ge1}\).

By definition,
\[
Z(0)=0,
\qquad
Z(r+1)-Z(r)=\chi_{\overline J}(r)
\quad(r\ge0).
\]

\begin{lemma}[Fixed-length binary rank]\label{lem:binary-rank}
For \(x\in\LLang_{m}\), the number of binary words \(u\in\{\mathtt{0},\mathtt{1}\}^{m}\) with \(C(u)<C(x)\) is \(B(x)\).
\end{lemma}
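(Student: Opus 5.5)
We argue by induction on \(m\), following the three-sector decomposition used in the proof of Proposition~\ref{prop:fixed-bijection}. For \(m=0\), both \(\LLang_{0}\) and \(\{\mathtt{0},\mathtt{1}\}^{0}\) consist only of \(\eps\). Since \(C(\eps)=0\), no binary word has value \(<0\), and indeed \(B(\eps)=0\).

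For the inductive step, let \(x\in\LLang_{m+1}\) and split according to the highest digit of \(x\).

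\emph{Highest digit \(\mathtt{0}\).} Write \(x=\mathtt{0}v\). A binary word \(u=\mathtt{1}u'\) has \(C(u)\ge2^{m+1}-1>2^{m+1}-2\ge C(x)\), by the range computation in Proposition~\ref{prop:fixed-bijection}. So only words \(u=\mathtt{0}u'\) can satisfy \(C(u)<C(x)\), and for these the condition is \(C(u')<C(v)\). The induction hypothesis gives the count \(B(v)=B(x)\).

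\emph{Highest digit \(\mathtt{1}\).} Write \(x=\mathtt{1}v\). All \(2^{m}\) binary words \(\mathtt{0}u'\) have value \(\le 2^{m+1}-2<C(x)\), so they all count. A word \(\mathtt{1}u'\) satisfies \(C(\mathtt{1}u')<C(\mathtt{1}v)\) if and only if \(C(u')<C(v)\), which contributes \(B(v)\) words by induction. The total is \(2^{m}+B(v)=B(x)\), since the leading digit \(\mathtt{1}\) in position \(m+1\) contributes \(2^{m}\) to \(B\).

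\emph{Highest digit \(\mathtt{2}\).} Then \(x=\mathtt{2}\mathtt{0}^{m}\), with \(C(x)=2^{m+2}-2\), the maximal value in \(\LLang_{m+1}\). Every binary word of length \(m+1\) other than \(x\) itself has a strictly smaller value. Because \(x\notin\{\mathtt{0},\mathtt{1}\}^{m+1}\), all \(2^{m+1}\) binary words count. This agrees with \(B(x)=2\cdot2^{m}=2^{m+1}\).

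The only point needing care is the comparison between sectors, namely that every word in the \(\mathtt{0}\)-sector has smaller value than every word in the \(\mathtt{1}\)-sector. This is exactly the disjoint, ordered ranges established in Proposition~\ref{prop:fixed-bijection}, so no real obstacle is expected. The rest is bookkeeping of the identity \(B(d\,v)=d\,2^{m}+B(v)\).
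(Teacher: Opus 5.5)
Your proof is correct and follows the paper's route: induction on the word length, split by the highest digit. In the $\mathtt{1}$-sector you count all lower-sector binary words plus the inductive count $B(v)$, and in the $\mathtt{2}$-sector you count all binary words. In the $\mathtt{0}$-sector case you also check explicitly that no word $\mathtt{1}u'$ lies below $x$, which the paper's proof leaves implicit.
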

\begin{proof}
We argue by induction on \(m\). The assertion is immediate for \(m=0\). Write \(x=d_{m}v\).

If \(d_{m}=\mathtt{0}\), the induction hypothesis gives \(B(v)=B(x)\) binary words below \(x\).

If \(d_{m}=\mathtt{1}\), all \(2^{m-1}\) binary words with highest digit \(\mathtt{0}\) lie below \(x\). Among the words with highest digit \(\mathtt{1}\), the suffix must have value less than \(C(v)\). The induction hypothesis therefore gives
\[
2^{m-1}+B(v)=B(x)
\]
binary words below \(x\).

If \(d_{m}=\mathtt{2}\), admissibility gives
\[
x=\mathtt{2}\mathtt{0}^{m-1}.
\]
All \(2^{m}\) binary words of length \(m\) lie below \(x\), and \(B(x)=2^{m}\).
\end{proof}

\begin{proposition}[Binary companion as the intrinsic counting function]\label{prop:B-counting}
For every \(r\ge0\), \(B(C^{-1}(r))=Z(r)\). Consequently, \(P(C^{-1}(r))=2Z(r)-r\).
\end{proposition}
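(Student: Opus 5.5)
The plan is to reduce the statement to Lemma~\ref{lem:binary-rank} by passing to a fixed length large enough to contain every relevant value. Fix \(r\ge0\) and choose \(m\) with \(r\le 2^{m+1}-2\). Set \(x:=C_{m}^{-1}(r)\in\LLang_{m}\). Since \(x\) differs from \(C^{-1}(r)\) only by leading zeros, and leading zeros contribute nothing to \(B\), we have \(B(x)=B(C^{-1}(r))\).

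Next, identify the count in Lemma~\ref{lem:binary-rank} with \(Z(r)\). By Proposition~\ref{prop:fixed-bijection}, \(C_{m}\) is a bijection from \(\LLang_{m}\) onto \(\{0,\ldots,2^{m+1}-2\}\). Hence the binary words \(u\in\{\mathtt{0},\mathtt{1}\}^{m}\) are in bijection, via \(C\), with the elements of \(C(\overline J)\) lying in that range. This uses two facts: a binary word stays binary after its leading zeros are stripped, and every canonical binary word of value \(<r\le 2^{m+1}-2\) can be padded to length \(m\), because its length is at most \(m\).

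It follows that the number of binary \(u\in\{\mathtt{0},\mathtt{1}\}^{m}\) with \(C(u)<r\) equals \(\#\{0\le k<r:k\in C(\overline J)\}=Z(r)\). Lemma~\ref{lem:binary-rank} says this number is \(B(x)\), which gives \(B(C^{-1}(r))=Z(r)\).

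The consequence then follows from Proposition~\ref{prop:CBP}. Applying \(C=2B-P\) to \(C^{-1}(r)\) gives \(P(C^{-1}(r))=2B(C^{-1}(r))-r=2Z(r)-r\).

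The only delicate step is the length bookkeeping in the second paragraph. There I must check that the values of binary words of length \(m\) are exactly the elements of \(C(\overline J)\) in \(\{0,\ldots,2^{m+1}-2\}\), with no value counted twice. Injectivity is given by Proposition~\ref{prop:fixed-bijection}. Surjectivity onto that range follows because a word of length greater than \(m\) with a nonzero digit at position \(k>m\) has value at least \(2^{m+1}-1\).
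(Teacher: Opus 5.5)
Your proposal is correct and follows essentially the same route as the paper: you pass to a fixed length, identify the binary-side values below \(r\) with the binary words of that length having smaller value, and apply Lemma~\ref{lem:binary-rank}, then conclude with \(C=2B-P\). The only cosmetic difference is that the paper takes \(m\) to be the canonical length of \(C^{-1}(r)\) and treats \(r=0\) separately, whereas you allow any sufficiently large \(m\) and note that \(B\) is unaffected by leading zeros, which also absorbs the case \(r=0\).
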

\begin{proof}
The case \(r=0\) is immediate. Let \(x=C^{-1}(r)\) have length \(m\).

By Proposition~\ref{prop:fixed-bijection}, every canonical word of length greater than \(m\) has value at least \(2^{m+1}-1\). On the other hand, \(r=C(x)\le2^{m+1}-2\). Hence every binary-side value less than \(r\) has a representative of length at most \(m\).

After leading zeros are added, these representatives are in one-to-one correspondence with the binary words of length \(m\) whose values are less than \(C(x)\). Lemma~\ref{lem:binary-rank} therefore gives
\[
Z(r)=B(x).
\]
The second identity follows from \(C(x)=2B(x)-P(x)\).
\end{proof}

\begin{corollary}[Conolly representation of \(Z\)]\label{cor:Z-Conolly}
For every \(n\ge1\),
\[
Z(n)=z_{n+1}.
\]
Equivalently,
\[
y_{n}=2Z(n)+n-1.
\]
\end{corollary}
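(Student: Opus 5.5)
We plan to derive the statement directly from the counting identity at the end of the previous section, namely
\[
\#\{1\le k\le n:k\in C(\overline J)\}=z_{n+2}-1\qquad(n\ge1).
\]
That identity came from combining the difference rule (Proposition~\ref{prop:A055938-difference}) with Corollary~\ref{cor:A055938-Conolly}, and it holds for \(n\ge1\). Since \(0\in C(\overline J)\), because \(0=C(\eps)\) and \(\eps\) contains no digit \(\mathtt{2}\), we can add the term \(k=0\) and obtain
\[
Z(n+1)=\sum_{k=0}^{n}\chi_{\overline J}(k)=1+(z_{n+2}-1)=z_{n+2}\qquad(n\ge1).
\]
Replacing \(n+1\) by \(n\) gives \(Z(n)=z_{n+1}\) for \(n\ge2\).

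The case \(n=1\) is checked directly. We have \(Z(1)=\chi_{\overline J}(0)=1\), and \(z_{2}=1\) by the definition of the Conolly sequence. This base case is the only point that needs separate attention, because the summed difference identity starts at \(k=1\) and so only covers \(n\ge2\) after the shift.

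For the equivalent form, we substitute \(z_{n+1}=Z(n)\) into Corollary~\ref{cor:A055938-Conolly}, \(y_{n}=2z_{n+1}+n-1\), which gives \(y_{n}=2Z(n)+n-1\) for every \(n\ge1\). Conversely, this form together with Corollary~\ref{cor:A055938-Conolly} gives back \(Z(n)=z_{n+1}\), so the two statements are equivalent.

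As a consistency check that avoids the Conolly relation, Corollary~\ref{cor:A055938-counting} together with \(\chi_{J}=1-\chi_{\overline J}\) gives
\[
\sum_{k=1}^{n}\chi_{\overline J}(k)=\frac{y_{n+1}-n-2}{2}.
\]
Hence
\[
Z(n+1)=\frac{y_{n+1}-n}{2},
\]
which is exactly \(y_{n+1}=2Z(n+1)+(n+1)-1\). This proves the second displayed form intrinsically for all indices \(m=n+1\ge2\). For \(m=1\) it reads \(y_{1}=2=2\cdot1+0\), which holds. The identification with \(z_{n+1}\) then follows from Corollary~\ref{cor:A055938-Conolly}.

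There is no genuine obstacle in either route; the only care required is the index shift and the inclusion of \(k=0\) in \(Z\).
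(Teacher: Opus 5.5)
Your proposal is correct and follows the paper's own proof. The paper likewise shifts the counting identity \(\#\{1\le k\le n:k\in C(\overline J)\}=z_{n+2}-1\), adds the contribution of \(0\in C(\overline J)\), and obtains \(y_{n}=2Z(n)+n-1\) from Corollary~\ref{cor:A055938-Conolly}. Your separate check of \(n=1\) is harmless, since the paper's shifted formula covers that case as an empty sum, and your intrinsic derivation of the second form from Corollary~\ref{cor:A055938-counting}, which avoids the Cloitre relation, is correct and a worthwhile addition.
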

\begin{proof}
Replacing \(n\) by \(n-1\) in the preceding counting formula gives
\[
\#\{1\le k\le n-1:k\in C(\overline J)\}=z_{n+1}-1.
\]
Since \(0\in C(\overline J)\), the definition of \(Z\) gives
\[
Z(n)=z_{n+1}.
\]
The formula for \(y_{n}\) follows from Corollary~\ref{cor:A055938-Conolly}.
\end{proof}

\begin{remark}\label{rem:Z-Conolly-convention}
The function \(Z\) counts the elements of \(C(\overline J)\) below its argument, including \(0\). The Conolly representation is
\[
Z(n)=z_{n+1}
\qquad(n\ge1).
\]
This indexing includes the empty word without an exceptional case.
\end{remark}

\begin{remark}[Relation to the first-occurrence formulation]\label{rem:Jackson-Ruskey}
Jackson and Ruskey related the first-occurrence structure of the Conolly sequence to A005187 through a binary-tree model \cite{JacksonRuskey2006}. In the present notation, this connection is expressed by the counting formula
\[
Z(n)=z_{n+1}.
\]
Their first-occurrence relation is not used in the proof of the A080578--A055938 theorem.
\end{remark}

\section{The Parent Map and Digit Reconstruction}

\subsection{The parent map and digit deletion}

Define
\[
\pi(r):=r-Z(r).
\]
Since
\[
\chi_{J}(k)+\chi_{\overline J}(k)=1,
\]
one has
\[
\pi(r)
=
\#\{0\le k<r:k\in C(J)\}
=
\sum_{k=0}^{r-1}\chi_{J}(k)
\qquad(r\ge0),
\]
where the sum is empty for \(r=0\). In particular,
\[
\pi(0)=0,
\qquad
\pi(r+1)-\pi(r)=\chi_{J}(r)
\quad(r\ge0).
\]

Thus \(\pi(r)\) counts the nonbinary-side values below \(r\). The following proposition identifies \(\pi\) with digit deletion on the value side.

\begin{proposition}[Digit deletion and the parent map]\label{prop:pi-deletion}
For every Mersenne word \(x\),
\[
C(\delta(x))=\pi(C(x)).
\]
Consequently, for every \(r,\ell\ge0\),
\[
C(\delta^{\ell}(C^{-1}(r)))=\pi^{\ell}(r).
\]
\end{proposition}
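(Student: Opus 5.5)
The identity \(C(\delta(x))=\pi(C(x))\) reduces to one earlier result once both sides are written in terms of \(B\). By Proposition~\ref{prop:CBP}, \(C(\delta(x))=C(x)-B(x)\). By the definition of the parent map, \(\pi(C(x))=C(x)-Z(C(x))\). So the first assertion is equivalent to
\[
B(x)=Z(C(x))
\]
for every Mersenne word \(x\).

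Proposition~\ref{prop:B-counting} gives \(B(C^{-1}(r))=Z(r)\) for the canonical representative. To apply it to an arbitrary \(x\), I would note that \(x\sim C^{-1}(C(x))\) by Proposition~\ref{prop:fixed-bijection}. Adding or removing leading zeros does not change \(B\), since a leading zero contributes \(0\cdot2^{k-1}\). Hence \(B(x)=B(C^{-1}(C(x)))=Z(C(x))\). This also covers the empty word and words with leading zeros, as well as both \(x\in J\) and \(x\in\overline J\). No case split on the lowest digit is needed, because Proposition~\ref{prop:CBP} holds for every word.

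For the iterated statement I would induct on \(\ell\). The case \(\ell=0\) is \(C(C^{-1}(r))=r\). Suppose \(C(\delta^{\ell}(C^{-1}(r)))=\pi^{\ell}(r)\). Apply the first assertion to the word \(x=\delta^{\ell}(C^{-1}(r))\), which is again a Mersenne word because deleting low digits preserves admissibility. This gives
\[
C(\delta^{\ell+1}(C^{-1}(r)))=\pi\bigl(C(\delta^{\ell}(C^{-1}(r)))\bigr)=\pi(\pi^{\ell}(r)).
\]
The convention \(\delta(\eps)=\eps\) is consistent with \(\pi(0)=0\), so the case where all digits have already been deleted causes no trouble.

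The only step needing care is the passage from canonical representatives to arbitrary words in the first identity, that is, checking that \(B\) is invariant under \(\sim\). This is immediate from the definition of \(B\), so I expect no real obstacle. The substantive content was already established in Lemma~\ref{lem:binary-rank} and Proposition~\ref{prop:B-counting}.
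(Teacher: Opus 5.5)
Your proof is correct and is essentially the paper's argument: you combine \(C(\delta(x))=C(x)-B(x)\) from Proposition~\ref{prop:CBP} with \(B(x)=Z(C(x))\) from Proposition~\ref{prop:B-counting}, and then induct on \(\ell\). Your additional remark that \(B\) is unchanged by leading zeros is a detail the paper leaves implicit; it is what lets Proposition~\ref{prop:B-counting} apply to non-canonical words.
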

\begin{proof}
By Propositions~\ref{prop:CBP} and \ref{prop:B-counting},
\[
C(\delta(x))=C(x)-B(x)=C(x)-Z(C(x))=\pi(C(x)).
\]
The iterated identity follows by induction.
\end{proof}
In view of Proposition~\ref{prop:pi-deletion}, we call \(\pi\) the parent map of the Mersenne representation. Its iterates correspond to successive deletion of the lower digits.

The compatibility of digit deletion with the parent map is expressed by the following commutative diagram:
\[
\begin{tikzcd}[column sep=large,row sep=large]
\LLang
  \arrow[r,"{\textstyle\delta}"]
  \arrow[d,"{\textstyle C}"']
&
\LLang
  \arrow[d,"{\textstyle C}"]
\\
\Z_{\ge0}
  \arrow[r,"{\textstyle\pi}"']
&
\Z_{\ge0}
\end{tikzcd}
\]
Thus \(C\circ\delta=\pi\circ C\).

\begin{lemma}\label{lem:fixed-pi}
Let \(h\ge0\), and let \(K^{(h)}=2^{h+1}-1\). If \(0\le\mu\le2K^{(h)}\), then
\[
C_{h+1}^{-1}(\mu)\in\LLang_{h+1}.
\]
Moreover, for \(0\le\ell\le h+1\),
\[
C\left(\delta^{\ell}(C_{h+1}^{-1}(\mu))\right)=\pi^{\ell}(\mu).
\]
\end{lemma}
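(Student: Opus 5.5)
The first assertion comes directly from Proposition~\ref{prop:fixed-bijection}. Since \(K^{(h)}=2^{h+1}-1\), we have \(2K^{(h)}=2^{h+2}-2\). This is exactly the upper end of the range of the fixed-length bijection \(C_{h+1}\colon\LLang_{h+1}\to\{0,1,\ldots,2^{h+2}-2\}\). Hence for \(0\le\mu\le2K^{(h)}\) the inverse \(C_{h+1}^{-1}(\mu)\) is defined and is a word of length exactly \(h+1\), possibly with leading zeros.

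For the second assertion, set \(x:=C_{h+1}^{-1}(\mu)\). Then \(x\) and the canonical word \(C^{-1}(\mu)\) have the same value. By the consequence of Proposition~\ref{prop:fixed-bijection} recorded after its proof, \(x\sim C^{-1}(\mu)\). Leading-zero equivalence is compatible with deletion, so \(\delta^{\ell}(x)\sim\delta^{\ell}(C^{-1}(\mu))\) for every \(\ell\ge0\). Equivalent words have equal values, so
\[
C\bigl(\delta^{\ell}(x)\bigr)=C\bigl(\delta^{\ell}(C^{-1}(\mu))\bigr)=\pi^{\ell}(\mu),
\]
where the last equality is the iterated identity of Proposition~\ref{prop:pi-deletion}.

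The alternative route applies \(C(\delta(y))=\pi(C(y))\) directly to the noncanonical word \(x\). This also works, because Proposition~\ref{prop:pi-deletion} is stated for every Mersenne word. The identity for \(\ell\) then follows by induction, with \(\delta^{\ell}(x)\in\LLang_{h+1-\ell}\) remaining a Mersenne word. For \(\ell=h+1\) this word is \(\eps\), and both sides are consistent.

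The only point needing care is the compatibility of \(\delta\) with leading-zero padding. One must check that deleting the lowest \(\ell\) digits of a padded word yields a padded version of the deleted canonical word, including the edge case where all nonzero digits are removed and the result is \(\mathtt{0}^{j}\sim\eps\). This is immediate because padding occurs only at the high end while \(\delta\) acts at the low end. The restriction \(\ell\le h+1\) is needed only so that \(\delta^{\ell}\) is applied within the fixed length.
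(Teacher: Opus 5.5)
Your proposal is correct and follows the paper's proof essentially verbatim: you get well-definedness from the fixed-length bijection with \(2K^{(h)}=2^{h+2}-2\), then use \(C_{h+1}^{-1}(\mu)\sim C^{-1}(\mu)\), compatibility of \(\sim\) with \(\delta^{\ell}\), and Proposition~\ref{prop:pi-deletion}. Your alternative route of applying Proposition~\ref{prop:pi-deletion} directly to the padded word is also valid, and the restriction \(\ell\le h+1\) is in fact not even needed, since \(\delta^{\ell}\) is defined for all \(\ell\).
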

\begin{proof}
Since \(2K^{(h)}=2^{h+2}-2\), Proposition~\ref{prop:fixed-bijection}, applied with \(m=h+1\), shows that \(C_{h+1}^{-1}(\mu)\in\LLang_{h+1}\) is well-defined. Moreover,
\[
C_{h+1}^{-1}(\mu)\sim C^{-1}(\mu).
\]
Compatibility with digit deletion gives
\[
\delta^{\ell}\left(C_{h+1}^{-1}(\mu)\right)
\sim
\delta^{\ell}\left(C^{-1}(\mu)\right).
\]
Applying \(C\) and using Proposition~\ref{prop:pi-deletion} proves the assertion.
\end{proof}

\subsection{Digit reconstruction from \(Z\) and \(\pi\)}

The parent iterates successively delete the lower digits of the Mersenne representation. The following formula recovers each digit from the counting function along the parent orbit.

\begin{proposition}[Digit reconstruction]\label{prop:digit-recovery}
Let \(C^{-1}(r)=d_{m}\cdots d_{1}\).  Then, for every \(i\ge1\),
\begin{equation}\label{eq:digit-recovery}
d_{i}=Z(\pi^{i-1}(r))-2Z(\pi^{i}(r)),
\end{equation}
where \(d_{i}=0\) for \(i>m\).
\end{proposition}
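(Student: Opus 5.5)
The plan is to work with the words \(x_{j}:=\delta^{j}(C^{-1}(r))=d_{m}\cdots d_{j+1}\) for \(j\ge0\), and translate the right-hand side of \eqref{eq:digit-recovery} into binary companions of these words. By Proposition~\ref{prop:pi-deletion}, \(C(x_{j})=\pi^{j}(r)\). Because \(x_{j}\) is a suffix-deleted Mersenne word, it is again a Mersenne word, and it is canonical (or \(\eps\)) since the leading digit \(d_{m}\) survives until everything is deleted. Hence \(x_{j}=C^{-1}(\pi^{j}(r))\), and Proposition~\ref{prop:B-counting} gives
\[
Z(\pi^{j}(r))=B(x_{j}).
\]

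It then remains to prove the purely word-level identity
\[
B(x_{i-1})-2B(x_{i})=d_{i}.
\]
Writing \(x_{i-1}=d_{m}\cdots d_{i}\) with the lowest digit in position \(1\), we have
\[
B(x_{i-1})=\sum_{k=i}^{m}d_{k}2^{k-i}=d_{i}+2\sum_{k=i+1}^{m}d_{k}2^{k-i-1}=d_{i}+2B(x_{i}),
\]
which is the desired identity. This is just the standard binary shift relation \(B(v)=d_{1}+2B(\delta(v))\).

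The case \(i>m\) is immediate: then \(x_{i-1}=x_{i}=\eps\), so \(\pi^{i-1}(r)=\pi^{i}(r)=0\), \(Z(0)=0\), and the right-hand side vanishes, consistent with the convention \(d_{i}=0\).

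The only point needing care is the identification \(x_{j}=C^{-1}(\pi^{j}(r))\). Strictly, we need only \(x_{j}\sim C^{-1}(\pi^{j}(r))\) together with the fact that \(B\) is unchanged by leading zeros, so even the canonicity of \(x_{j}\) is inessential. I expect no real obstacle beyond this bookkeeping.
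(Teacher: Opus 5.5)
Your proposal is correct and follows essentially the same route as the paper. Both identify \(Z(\pi^{j}(r))\) with \(B\bigl(\delta^{j}(C^{-1}(r))\bigr)\) via Propositions~\ref{prop:pi-deletion} and~\ref{prop:B-counting}, and then apply the lowest-digit relation \(B(x)=d_{1}+2B(\delta(x))\) to \(x_{i-1}\). Your remark on canonicity (or invariance of \(B\) under leading zeros) just makes explicit a point the paper leaves implicit.
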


\begin{proof}
For the lowest digit,
\[
B(C^{-1}(r))=2B(\delta(C^{-1}(r)))+d_{1},
\]
so \(d_{1}=Z(r)-2Z(\pi(r))\). For general \(i\), let \(x_{i-1}=\delta^{i-1}(C^{-1}(r))\). Its lowest digit is \(d_{i}\), while
\[
C(x_{i-1})=\pi^{i-1}(r),\qquad C(\delta(x_{i-1}))=\pi^{i}(r).
\]
Applying the lowest-digit formula proves the assertion. If \(i>m\), both iterates are zero; this also covers \(r=0\).
\end{proof}

Thus the Mersenne digits are determined by \(Z\) along the parent orbit of \(r\). After the Mersenne tau function is introduced, this formula will be rewritten entirely in terms of four consecutive tau rows.

\section{Plateau Structure and the Conolly Frequency Law}

Let
\[
C(\overline J)=\{b_{0}<b_{1}<b_{2}<\cdots\},
\qquad
b_{0}=0,
\]
be the increasing enumeration of the binary-side value set.
By the definition of \(Z\),
\[
b_{m}=\max\{r\ge0:Z(r)=m\}
\qquad(m\ge0).
\]
Thus \(b_{m}\) is the right endpoint of the plateau on which \(Z\) has value \(m\). For \(m\ge1\),
\[
Z(r)=m
\quad\Longleftrightarrow\quad
b_{m-1}<r\le b_{m}.
\]
Hence this plateau contains \(b_{m}-b_{m-1}\) integers. Define
\[
H(m):=b_{m}-b_{m-1}
\qquad(m\ge1).
\]

For each \(m\ge0\), let
\[
w_{m}=C^{-1}(b_{m}).
\]
Since \(b_{m}\) belongs to the binary side, the word \(w_{m}\) contains only the digits \(\mathtt{0}\) and \(\mathtt{1}\). Moreover, Proposition~\ref{prop:B-counting} gives
\[
B(w_{m})=Z(b_{m})=m.
\]
Thus the digits of \(w_{m}\) are precisely the binary digits of \(m\). Using \(C=2B-P\), we obtain
\[
b_{m}=C(w_{m})=2m-P(w_{m}).
\]

For a positive integer \(n\), let \(\nu_{2}(n)\) denote the exponent of \(2\) in its prime factorization.

\begin{proposition}[Plateau lengths and endpoint values]\label{prop:plateau-lengths}
For every \(m\ge1\),
\[
H(m)=b_{m}-b_{m-1}=1+\nu_{2}(m)=\nu_{2}(2m)=\Aonefiveoneone(m).
\]
Moreover,
\[
\pi(b_{m})=m-P(w_{m})=\nu_{2}(m!)=\Aoneonethreesevenone(m).
\]
\end{proposition}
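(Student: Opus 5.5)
The plan is to reduce both assertions to binary arithmetic, using the fact recorded just before the proposition: the word \(w_{m}=C^{-1}(b_{m})\) has as its digits exactly the binary digits of \(m\). Hence \(P(w_{m})=s_{2}(m)\), the binary digit sum of \(m\), and the identity \(b_{m}=2m-P(w_{m})\) becomes \(b_{m}=2m-s_{2}(m)\) for every \(m\ge0\). Both claims then follow from two classical facts about \(s_{2}\).

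For the plateau length, subtract consecutive values:
\[
H(m)=b_{m}-b_{m-1}=2-s_{2}(m)+s_{2}(m-1).
\]
Next I use the standard carry identity. If \(\nu_{2}(m)=t\), then the binary expansion of \(m-1\) is obtained from that of \(m\) by replacing its lowest block \(\mathtt{1}\mathtt{0}^{t}\) with \(\mathtt{0}\mathtt{1}^{t}\). This gives \(s_{2}(m-1)=s_{2}(m)-1+t\), and therefore \(H(m)=1+t=1+\nu_{2}(m)=\nu_{2}(2m)\). This is the ruler function, OEIS A001511. (Alternatively, one could argue directly on Mersenne words: the word \(w_{m-1}\) is obtained from \(w_{m}\) by the same block replacement, and one compares the weights \(2^{t+1}-1\) and \(\sum_{k=1}^{t}(2^{k}-1)\). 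This gives the same count.)

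For the endpoint value, Proposition~\ref{prop:pi-deletion} together with Proposition~\ref{prop:B-counting} gives
\[
\pi(b_{m})=b_{m}-Z(b_{m})=b_{m}-m=m-s_{2}(m)=m-P(w_{m}).
\]
Legendre's formula states \(\nu_{2}(m!)=m-s_{2}(m)\), which is OEIS A011371. If a self-contained argument is preferred, sum the plateau-length identity instead. Since \(H(k)-1=\nu_{2}(k)\), we get
\[
b_{m}-m=\sum_{k=1}^{m}\bigl(H(k)-1\bigr)=\sum_{k=1}^{m}\nu_{2}(k)=\nu_{2}(m!),
\]
using \(b_{0}=0\). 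This route avoids citing Legendre's formula separately.

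The only step with real content is the carry identity for \(s_{2}(m-1)\), and it is elementary. The main point of care is justifying that the digits of \(w_{m}\) are the binary digits of \(m\). That relies on \(B(w_{m})=m\) and the uniqueness of binary expansions, applied to a word over the digits \(\mathtt{0},\mathtt{1}\); both have already been established.
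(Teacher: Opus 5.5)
Your proof is correct and follows the paper's argument. Both use \(b_{m}=2m-P(w_{m})\), the carry step in which the \(\nu_{2}(m)\) trailing digits change between \(w_{m-1}\) and \(w_{m}\), and \(\pi(b_{m})=b_{m}-Z(b_{m})=m-P(w_{m})\) followed by Legendre's formula. Your optional telescoping route \(b_{m}-m=\sum_{k=1}^{m}\nu_{2}(k)=\nu_{2}(m!)\) is a valid self-contained substitute for citing Legendre, which the paper does not use.
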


\begin{proof}
Using
\[
b_{m}=2m-P(w_{m}),
\]
we obtain
\[
H(m)=2-\bigl(P(w_{m})-P(w_{m-1})\bigr).
\]
Let
\[
e=\nu_{2}(m).
\]
Since \(B(w_{m-1})=m-1\) and \(B(w_{m})=m\), the passage from \(w_{m-1}\) to \(w_{m}\), after adding one leading zero if necessary, changes \(e\) trailing digits \(\mathtt{1}\) to \(\mathtt{0}\) and changes the next digit \(\mathtt{0}\) to \(\mathtt{1}\). Therefore,
\[
P(w_{m})-P(w_{m-1})=1-e,
\]
and hence
\[
H(m)=1+\nu_{2}(m)=\nu_{2}(2m).
\]
Since \(Z(b_{m})=m\), we also have
\[
\pi(b_{m})
=
b_{m}-Z(b_{m})
=
m-P(w_{m}).
\]
The quantity \(P(w_{m})\) is the number of digits \(\mathtt{1}\) in the binary representation of \(m\). Legendre's formula therefore gives
\[
m-P(w_{m})=\nu_{2}(m!),
\]
and the result follows \cite{HardyWright}.
\end{proof}

\begin{remark}[An independent frequency-based proof of the Conolly representation]\label{rem:Conolly-frequency}
Although Corollary~\ref{cor:Z-Conolly} was derived above from the OEIS relation involving A080578, it also admits the following intrinsic proof, which uses only the plateau structure of \(Z\) and the frequency characterization of the Conolly sequence. For every \(m\ge1\), the value \(m\) occurs in
\[
Z(1),Z(2),Z(3),\ldots
\]
exactly
\[
H(m)=1+\nu_{2}(m)
\]
times.

After its first term is removed, the Conolly sequence has the same frequency property: each positive integer \(m\) occurs exactly \(1+\nu_{2}(m)\) times \cite{EricksonEtAl2012}. Both sequences are nondecreasing. Hence they coincide term by term:
\[
Z(n)=z_{n+1}
\qquad(n\ge1).
\]
\end{remark}

\begin{remark}
The plateau-length formula identifies the first differences of A005187 with the ruler function
\[
m\longmapsto\nu_{2}(2m)=1+\nu_{2}(m).
\]
In the present notation,
\[
H(m)=\Aonefiveoneone(m),
\qquad
\pi(b_{m})=\Aoneonethreesevenone(m).
\]
Thus A001511 gives the plateau lengths of \(Z\), while A011371 gives the values of \(\pi\) at the right endpoints of these plateaus.
\end{remark}

\section{Sector Recursion and Truncation Blocks}
For a fixed word length, a highest-digit sector is the set of Mersenne words with a prescribed highest digit. When restricted to binary words, the sectors with highest digit \(\mathtt{0}\) and \(\mathtt{1}\) are called the lower and upper binary sectors, respectively.
\begin{lemma}[Sector recursion for \(Z\)]\label{lem:Z-sector}
Let \(m\ge1\), and let \(A_{m}=2^{m}-1\). Then, for \(0\le s\le2^{m}-2\),
\[
Z(A_{m}+s)=2^{m-1}+Z(s).
\]
Moreover,
\[
Z(2A_{m})=2^{m}.
\]
\end{lemma}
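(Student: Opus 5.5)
The plan is to work on the word side and translate back with Proposition~\ref{prop:B-counting}, which says \(Z(r)=B(C^{-1}(r))\). In the language of that proposition, the whole lemma is a statement about how the binary companion \(B\) behaves under the highest-digit sector decomposition used in the proof of Proposition~\ref{prop:fixed-bijection}.

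First, fix \(s\) with \(0\le s\le 2^{m}-2\). By Proposition~\ref{prop:fixed-bijection} applied with length \(m-1\), there is a unique word \(v=C_{m-1}^{-1}(s)\in\LLang_{m-1}\). Now form the length-\(m\) word \(\mathtt{1}v\).
\begin{itemize}
\item It is a Mersenne word: the new top digit is \(\mathtt{1}\), so the only possible \(\mathtt{2}\) is inside \(v\), and the admissibility condition there is unaffected.
\item Its value is \(C(\mathtt{1}v)=(2^{m}-1)+C(v)=A_{m}+s\).
\end{itemize}
By uniqueness of representations, \(\mathtt{1}v\) is equivalent under \(\sim\) to \(C^{-1}(A_{m}+s)\). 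Since \(B\) ignores leading zeros, Proposition~\ref{prop:B-counting} gives
\[
Z(A_{m}+s)=B(\mathtt{1}v)=2^{m-1}+B(v)=2^{m-1}+Z(s).
\]
The last step uses \(B(v)=Z(C(v))=Z(s)\), again by Proposition~\ref{prop:B-counting} together with leading-zero invariance.

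Second, for the endpoint \(2A_{m}\), the canonical representative is \(\mathtt{2}\mathtt{0}^{m-1}\), whose value is \(2(2^{m}-1)\). Hence \(B=2\cdot2^{m-1}=2^{m}\), and Proposition~\ref{prop:B-counting} gives \(Z(2A_{m})=2^{m}\).

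As an alternative cross-check, the counting definition of \(Z\) gives the same result directly. The binary-side values below \(A_{m}+s\) are of two kinds:
\begin{itemize}
\item all \(2^{m-1}\) binary words of length \(m-1\), whose values are at most \(2^{m}-2<A_{m}\);
\item the binary words \(\mathtt{1}u\) with \(C(u)<s\), of which there are \(Z(s)\).
\end{itemize}
This is essentially Lemma~\ref{lem:binary-rank}.

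The only point needing care is the identification of \(\mathtt{1}v\), a fixed-length word that may carry leading zeros inside \(v\), with the canonical representative. This is handled by invariance of \(C\) and \(B\) under \(\sim\). No real obstacle is expected.
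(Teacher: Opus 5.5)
Your proof is correct, but your main argument takes a somewhat different route from the paper. The paper proves the lemma directly from the definition of \(Z\) as a count. The \(2^{m-1}\) binary words of length \(m\) with highest digit \(\mathtt{0}\) all lie below \(A_{m}+s\). A word \(\mathtt{1}u\) lies below \(A_{m}+s\) exactly when \(C(u)<s\), which contributes \(Z(s)\). At \(2A_{m}=C(\mathtt{2}\mathtt{0}^{m-1})\), all \(2^{m}\) binary words of length \(m\) lie below. This is precisely your ``alternative cross-check.''

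Your primary argument instead goes through Proposition~\ref{prop:B-counting}. The lemma then reduces to two observations: prefixing the digit \(\mathtt{1}\) at position \(m\) adds \(2^{m-1}\) to \(B\), and \(B(\mathtt{2}\mathtt{0}^{m-1})=2^{m}\). There is no circularity, because Proposition~\ref{prop:B-counting} precedes this lemma and does not depend on it. It rests on the same highest-digit count, packaged once in Lemma~\ref{lem:binary-rank}.

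Your version is shorter and exhibits the sector recursion as an instance of the digitwise additivity of \(B\). The paper's version is self-contained from the definition of \(Z\) and does not need the identification \(Z=B\circ C^{-1}\).
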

\begin{proof}
The binary words of length \(m\) in the lower binary sector have highest digit \(\mathtt{0}\) and contribute \(2^{m-1}\) values below \(A_{m}+s\). A word in the upper binary sector has the form \(\mathtt{1}u\), and
\[
C(\mathtt{1}u)<A_{m}+s\quad\Longleftrightarrow\quad C(u)<s.
\]
The upper binary sector therefore contributes \(Z(s)\) values. At the endpoint \(2A_{m}=C(\mathtt{2}\mathtt{0}^{m-1})\), all \(2^{m}\) binary words of length \(m\) lie below it.
\end{proof}
For \(\ell\ge0\), define
\[
T_{\ell}(r):=r+2(2^{\ell}-1)Z(r).
\]
\begin{lemma}[Trailing-zero shift]\label{lem:zero-shift}
For \(\ell,r\ge0\),
\[
T_{\ell}(r)=C(C^{-1}(r)\mathtt{0}^{\ell}),
\qquad
C^{-1}(T_{\ell}(r))\sim C^{-1}(r)\mathtt{0}^{\ell}.
\]
Consequently,
\[
T_{a}(T_{b}(r))=T_{a+b}(r),
\qquad
Z(T_{\ell}(r))=2^{\ell}Z(r),
\]
and, for \(\ell\ge1\),
\[
\pi(T_{\ell}(r))=T_{\ell-1}(r).
\]
More generally,
\[
\pi^{a}(T_{\ell}(r))=T_{\ell-a}(r)
\qquad(0\le a\le \ell).
\]
\end{lemma}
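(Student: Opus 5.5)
Let \(x=C^{-1}(r)\), the canonical word of \(r\). The plan is to prove the first identity directly and derive everything else from it together with Propositions~\ref{prop:CBP}, \ref{prop:B-counting} and \ref{prop:pi-deletion}.

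\textbf{Step 1: the word \(x\mathtt{0}^{\ell}\) and its value.} First check that \(x\mathtt{0}^{\ell}\) is a Mersenne word: if \(x\) contains a \(\mathtt{2}\), all digits below it are \(\mathtt{0}\), and appending zeros preserves this. Shifting a digit \(d_{k}\) from position \(k\) to position \(k+\ell\) changes its weight from \(2^{k}-1\) to
\[
2^{k+\ell}-1=2^{\ell}(2^{k}-1)+(2^{\ell}-1).
\]
Summing over the digits gives
\[
C(x\mathtt{0}^{\ell})=2^{\ell}C(x)+(2^{\ell}-1)P(x).
\]
Substituting \(P(x)=2B(x)-C(x)\) from Proposition~\ref{prop:CBP} yields
\[
C(x\mathtt{0}^{\ell})=C(x)+2(2^{\ell}-1)B(x).
\]
By Proposition~\ref{prop:B-counting}, \(B(x)=Z(r)\), so this equals \(T_{\ell}(r)\). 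The relation \(C^{-1}(T_{\ell}(r))\sim x\mathtt{0}^{\ell}\) then follows from uniqueness of the representation (Proposition~\ref{prop:fixed-bijection}), since words with equal values are \(\sim\)-equivalent.

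\textbf{Step 2: the consequences.} Each follows from Step 1.
\begin{itemize}
\item \emph{Composition.} \(C^{-1}(T_{b}(r))\sim x\mathtt{0}^{b}\). Appending \(\mathtt{0}^{a}\) is admissible and compatible with \(\sim\), so \(T_{a}(T_{b}(r))=C(x\mathtt{0}^{a+b})=T_{a+b}(r)\).
\item \emph{Counting function.} Take the canonical representative of \(T_{\ell}(r)\). It is \(x\mathtt{0}^{\ell}\) itself, except when \(x=\eps\), where the value is \(0\). Proposition~\ref{prop:B-counting} gives \(Z(T_{\ell}(r))=B(x\mathtt{0}^{\ell})=2^{\ell}B(x)=2^{\ell}Z(r)\), since appending zeros doubles every binary weight. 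Note that \(B\) is invariant under leading zeros, so the choice of representative does not matter.
\item \emph{Parent map.} By Proposition~\ref{prop:pi-deletion}, \(\pi(T_{\ell}(r))=C(\delta(x\mathtt{0}^{\ell}))=C(x\mathtt{0}^{\ell-1})=T_{\ell-1}(r)\) for \(\ell\ge1\). Iterating, or applying the iterated form of Proposition~\ref{prop:pi-deletion} with \(\delta^{a}(x\mathtt{0}^{\ell})=x\mathtt{0}^{\ell-a}\), gives \(\pi^{a}(T_{\ell}(r))=T_{\ell-a}(r)\) for \(0\le a\le\ell\).
\end{itemize}

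\textbf{Main obstacle.} The only delicate point is bookkeeping: Propositions~\ref{prop:B-counting} and \ref{prop:pi-deletion} are stated for canonical words or arbitrary Mersenne words, so we must make sure they apply to \(x\mathtt{0}^{\ell}\). This requires invariance of \(C\), \(B\) and \(P\) under leading zeros, and the degenerate case \(r=0\), which is handled by \(T_{\ell}(0)=0\). The underlying arithmetic is just the weight identity in Step 1.
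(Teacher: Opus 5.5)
Your proof is correct and follows the paper's argument essentially line by line. Both rest on the weight identity \(C(x\mathtt{0}^{\ell})=2^{\ell}C(x)+(2^{\ell}-1)P(x)\) together with \(B(C^{-1}(r))=Z(r)\); you substitute \(P=2B-C\) where the paper uses \(P(x)=2Z(r)-r\) directly, then derive the same consequences from \(B(x\mathtt{0}^{\ell})=2^{\ell}B(x)\) and the deletion--parent conjugacy, and your added bookkeeping on canonical representatives and the case \(r=0\) just makes explicit what the paper leaves implicit.
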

\begin{proof}
Let \(x=C^{-1}(r)\). Appending \(\ell\) zeros gives
\[
C(x\mathtt{0}^{\ell})=2^{\ell}C(x)+(2^{\ell}-1)P(x).
\]
Since \(P(x)=2Z(r)-r\),
\[
C(x\mathtt{0}^{\ell})
=2^{\ell}r+(2^{\ell}-1)(2Z(r)-r)
=r+2(2^{\ell}-1)Z(r)
=T_{\ell}(r).
\]
The characterization of leading-zero equivalence by the value map gives the second relation. Appending zeros twice gives the composition law, and
\[
B(x\mathtt{0}^{\ell})=2^{\ell}B(x)
\]
gives the identity for \(Z\). Finally, deletion of one appended zero gives \(\pi(T_{\ell}(r))=T_{\ell-1}(r)\), and iteration gives the last formula.
\end{proof}
\begin{corollary}[Truncation-block length]\label{cor:block-length}
For \(\ell,r\ge0\),
\[
T_{\ell}(r+1)-T_{\ell}(r)
=1+2(2^{\ell}-1)\chi_{\overline J}(r)
=
\begin{cases}
2^{\ell+1}-1,&r\in C(\overline J),\\
1,&r\in C(J).
\end{cases}
\]
In particular, \(T_{\ell}\) is strictly increasing.
\end{corollary}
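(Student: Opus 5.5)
The proof is a direct computation from the definition of \(T_{\ell}\). By definition,
\[
T_{\ell}(r+1)-T_{\ell}(r)=1+2(2^{\ell}-1)\bigl(Z(r+1)-Z(r)\bigr).
\]
The recursion \(Z(r+1)-Z(r)=\chi_{\overline J}(r)\), which holds by the definition of \(Z\) as a partial sum of \(\chi_{\overline J}\), turns this into the first displayed formula.

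For the case split, recall that \(C(J)\) and \(C(\overline J)\) partition \(\Z_{\ge0}\). If \(r\in C(\overline J)\), then \(\chi_{\overline J}(r)=1\), and the increment is \(1+2^{\ell+1}-2=2^{\ell+1}-1\). If \(r\in C(J)\), then \(\chi_{\overline J}(r)=0\), and the increment is \(1\).

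Strict monotonicity follows because both possible increments are at least \(1\) for every \(\ell\ge0\). When \(\ell=0\) the increment is identically \(1\), which is consistent with \(T_{0}=\mathrm{id}\).

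There is no real obstacle in this argument. The only point to check is that the identity for \(Z(r+1)-Z(r)\) holds for all \(r\ge0\), including \(r=0\). This is true because \(0\in C(\overline J)\) and \(Z(1)=1\).

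If an interpretation as truncation blocks is wanted, Lemma~\ref{lem:zero-shift} supplies it. There \(T_{\ell}(r)=C(C^{-1}(r)\mathtt{0}^{\ell})\), so the increment counts the words whose \(\ell\)-fold deletion has value \(r\). For a binary prefix these are the \(2^{\ell+1}-1\) words of the form \(C^{-1}(r)v\) with \(v\in\LLang_{\ell}\), by Proposition~\ref{prop:fixed-bijection}. For a nonbinary prefix the only such word is \(C^{-1}(r)\mathtt{0}^{\ell}\). This gives an alternative word-level proof of the same dichotomy.
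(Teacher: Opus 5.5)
Your main argument is correct and is exactly the paper's proof: substitute $Z(r+1)-Z(r)=\chi_{\overline J}(r)$ into the definition of $T_{\ell}$, read off the two cases, and note that the increment is always at least $1$. Your optional word-level remark needs one more ingredient to stand as an independent proof. In the paper, the fact that $[T_{\ell}(r),T_{\ell}(r+1))$ is exactly the fiber of $\pi^{\ell}$ over $r$ (Theorem~\ref{thm:truncation-blocks}) is itself derived from this corollary, so you would also need that $\pi^{\ell}$ is nondecreasing, which follows from $\pi(r+1)-\pi(r)=\chi_{J}(r)\ge0$.
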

\begin{proof}
The result follows from
\[
Z(r+1)-Z(r)=\chi_{\overline J}(r).
\]
\end{proof}
We call \([T_{\ell}(r),T_{\ell}(r+1))\) the \(\ell\)-truncation block over \(r\).
\begin{theorem}[Truncation blocks and parent iterates]\label{thm:truncation-blocks}
For \(\ell,r,\mu\ge0\),
\[
\pi^{\ell}(\mu)=r
\quad\Longleftrightarrow\quad
T_{\ell}(r)\le\mu<T_{\ell}(r+1).
\]
Consequently,
\[
\pi^{\ell}(\mu)=\max\{r:T_{\ell}(r)\le\mu\}.
\]
\end{theorem}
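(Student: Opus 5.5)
The plan is to combine three facts: \(\pi^{\ell}\) is nondecreasing, \(T_{\ell}\) is strictly increasing, and \(\pi^{\ell}\circ T_{\ell}=\mathrm{id}\).

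First, \(\pi\) is nondecreasing, since \(\pi(\mu+1)-\pi(\mu)=\chi_{J}(\mu)\in\{0,1\}\). Hence every iterate \(\pi^{\ell}\) is nondecreasing. Second, Lemma~\ref{lem:zero-shift} with \(a=\ell\) gives \(\pi^{\ell}(T_{\ell}(r))=T_{0}(r)=r\) for every \(r\ge0\). Third, Corollary~\ref{cor:block-length} shows that \(T_{\ell}\) is strictly increasing, with \(T_{\ell}(0)=0\).

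Because \(T_{\ell}\) is strictly increasing from \(0\), the half-open blocks \([T_{\ell}(r),T_{\ell}(r+1))\), \(r\ge0\), partition \(\Z_{\ge0}\). So each \(\mu\ge0\) lies in exactly one block; call its index \(r\). The task is then to show \(\pi^{\ell}(\mu)=r\) for that \(r\), which gives the forward implication from uniqueness of the block and the reverse implication directly.

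Take \(T_{\ell}(r)\le\mu<T_{\ell}(r+1)\). Monotonicity gives
\[
r=\pi^{\ell}(T_{\ell}(r))\le\pi^{\ell}(\mu)\le\pi^{\ell}(T_{\ell}(r+1))=r+1.
\]
The remaining step, and the only real obstacle, is to rule out \(\pi^{\ell}(\mu)=r+1\) when \(\mu<T_{\ell}(r+1)\). Equivalently, \(T_{\ell}(s)\) should be the \emph{smallest} preimage of \(s\) under \(\pi^{\ell}\).

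I would argue this on the word side via Proposition~\ref{prop:pi-deletion}. If \(\pi^{\ell}(\mu)=s\), then \(\delta^{\ell}(C^{-1}(\mu))\sim C^{-1}(s)\). Padding to a common length, \(C^{-1}(\mu)\sim C^{-1}(s)v\) for some suffix \(v\) of length \(\ell\). Since the value of a word with a fixed upper part increases with its lower part, the value is minimized by \(v=\mathtt{0}^{\ell}\). This gives \(\mu\ge C(C^{-1}(s)\mathtt{0}^{\ell})=T_{\ell}(s)\) by Lemma~\ref{lem:zero-shift}. The monotonicity of value in the suffix comes from \(C(xv)=C(x\mathtt{0}^{\ell})+C(v)\), with \(C(v)\ge0\); admissibility does not matter here, because \(C(v)\) is the same digit-weight sum.

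Alternatively, I could induct on \(\ell\) using \(\pi^{\ell}=\pi^{\ell-1}\circ\pi\), reducing to \(\ell=1\). In that case \(\pi(\mu)=s\) forces \(\mu\ge T_{1}(s)\) by the same deletion argument.

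With minimality, \(\pi^{\ell}(\mu)=r\) holds on the block. The max formula then follows immediately: \(T_{\ell}(r)\le\mu\) holds exactly for \(r\le\pi^{\ell}(\mu)\), again because \(T_{\ell}\) is increasing.
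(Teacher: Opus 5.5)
Your proof is correct, but it follows a different route from the paper's.

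The paper works fiber by fiber on the word side. With \(u=C^{-1}(r)\), it lists the admissible length-\(\ell\) suffixes \(v\): every \(v\in\LLang_{\ell}\) if \(u\in\overline J\), and only \(\mathtt{0}^{\ell}\) if \(u\in J\). It then uses the fixed-length bijection to show that the values \(T_{\ell}(r)+C(v)\) fill exactly \([T_{\ell}(r),T_{\ell}(r+1))\), matching the block lengths \(2^{\ell+1}-1\) or \(1\) from Corollary~\ref{cor:block-length}. Deletion--parent conjugacy transfers this to \(\pi^{\ell}\).

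You instead argue order-theoretically from three facts:
\begin{itemize}
\item \(\pi^{\ell}\) is nondecreasing;
\item \(\pi^{\ell}\circ T_{\ell}=\mathrm{id}\), by Lemma~\ref{lem:zero-shift};
\item \(T_{\ell}(\pi^{\ell}(\mu))\le\mu\), since \(\mu=C(C^{-1}(s)\mathtt{0}^{\ell})+C(v)\) with \(C(v)\ge0\).
\end{itemize}
Together these give the Galois-connection property \(T_{\ell}(r)\le\mu\iff r\le\pi^{\ell}(\mu)\), from which both the block characterization and the max formula follow at once.

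Your route avoids the \(J\)/\(\overline J\) case split and the surjectivity half of the fixed-length bijection. It uses Corollary~\ref{cor:block-length} only for strict monotonicity of \(T_{\ell}\), so it is shorter and does not depend on the exact block lengths.

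The paper's route, in exchange, identifies each block explicitly with the set of words \(uv\). That description is what gets reused later: in Proposition~\ref{prop:quotient-remainder} (the remainder range, and singleton blocks over \(C(J)\)) and in Theorem~\ref{thm:diagonal-defect}.
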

\begin{proof}
Let \(u=C^{-1}(r)\). When a fixed word length is needed, we replace a word by an equivalent word under \(\sim\). We first identify all words whose deletion of the lowest \(\ell\) digits is \(u\). If \(u\in\overline J\), then every word \(v\in\LLang_{\ell}\) may occur as a length-\(\ell\) suffix. For each such \(v\),
\[
C(uv)=C(u\mathtt{0}^{\ell})+C(v)=T_{\ell}(r)+C(v).
\]
By the fixed-length bijection, \(C(v)\) runs once through
\[
0,1,\ldots,2^{\ell+1}-2.
\]
Since Corollary~\ref{cor:block-length} gives
\[
T_{\ell}(r+1)-T_{\ell}(r)=2^{\ell+1}-1,
\]
these values form exactly the integer interval
\[
T_{\ell}(r)\le\mu<T_{\ell}(r+1).
\]
If \(u\in J\), admissibility forces the suffix to be \(\mathtt{0}^{\ell}\). The set of corresponding values is therefore the singleton \(\{T_{\ell}(r)\}\), and Corollary~\ref{cor:block-length} gives \(T_{\ell}(r+1)-T_{\ell}(r)=1\). Thus the same half-open interval description holds in both cases. Consequently,
\[
\delta^{\ell}(C^{-1}(\mu))=C^{-1}(r)
\quad\Longleftrightarrow\quad
T_{\ell}(r)\le\mu<T_{\ell}(r+1).
\]
The deletion--parent conjugacy gives the stated equivalence. Strict monotonicity of \(T_{\ell}\) then gives the stated maximum formula.
\end{proof}
\begin{definition}[Mersenne truncation remainder]\label{def:truncation-remainder}
For \(\ell,\mu\ge0\), define
\[
\rho_{\ell}(\mu):=\mu-T_{\ell}(\pi^{\ell}(\mu)).
\]
\end{definition}

\begin{proposition}[Mersenne quotient--remainder decomposition]\label{prop:quotient-remainder}
Let \(\ell,\mu\ge0\), and choose Mersenne words \(u,v\) such that
\[
C^{-1}(\mu)\sim uv,
\]
where \(v\) has length \(\ell\). Then
\[
C(u)=\pi^{\ell}(\mu),
\qquad
C(v)=\rho_{\ell}(\mu),
\]
and
\[
\mu=T_{\ell}(\pi^{\ell}(\mu))+\rho_{\ell}(\mu).
\]
If \(\pi^{\ell}(\mu)\in C(J)\), then \(\rho_{\ell}(\mu)=0\). If \(\pi^{\ell}(\mu)\in C(\overline J)\), then
\[
0\le\rho_{\ell}(\mu)\le2^{\ell+1}-2.
\]
For \(0\le a\le\ell\),
\[
\pi^{a}(\rho_{\ell}(\mu))
=
\rho_{\ell-a}(\pi^{a}(\mu)).
\]
For \(\mu\ge1\),
\[
\pi^{\ell}(\mu)-\pi^{\ell}(\mu-1)
=
\begin{cases}1,&\rho_{\ell}(\mu)=0,\\0,&\rho_{\ell}(\mu)>0.\end{cases}
\]
Finally, if \(C^{-1}(\mu)=d_{m}\cdots d_{1}\), then
\[
\rho_{\ell+1}(\mu)
=
\rho_{\ell}(\mu)+d_{\ell+1}(2^{\ell+1}-1).
\]
\end{proposition}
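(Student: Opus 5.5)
The plan is to reduce everything to the word level, using the splitting \(C^{-1}(\mu)\sim uv\) with \(v\in\LLang_{\ell}\). Such a splitting exists because we may pad \(C^{-1}(\mu)\) with leading zeros until its length is at least \(\ell\). The prefix is then \(u=\delta^{\ell}(uv)\), so by Proposition~\ref{prop:pi-deletion} and compatibility of \(\sim\) with deletion, \(C(u)=\pi^{\ell}(\mu)\).

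For \(C(v)\) we compute directly:
\[
C(uv)=C(u\mathtt{0}^{\ell})+C(v).
\]
By Lemma~\ref{lem:zero-shift}, \(C(u\mathtt{0}^{\ell})=T_{\ell}(C(u))=T_{\ell}(\pi^{\ell}(\mu))\). Comparing with Definition~\ref{def:truncation-remainder} gives \(C(v)=\rho_{\ell}(\mu)\), and the decomposition \(\mu=T_{\ell}(\pi^{\ell}(\mu))+\rho_{\ell}(\mu)\) follows. The range statements come from the proof of Theorem~\ref{thm:truncation-blocks}. If \(u\in J\), admissibility forces \(v=\mathtt{0}^{\ell}\), so \(\rho_{\ell}(\mu)=0\). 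If \(u\in\overline J\), then \(v\in\LLang_{\ell}\) and Proposition~\ref{prop:fixed-bijection} gives \(0\le C(v)\le2^{\ell+1}-2\).

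The commutation formula again uses words. Let \(v=d_{\ell}\cdots d_{1}\). Then \(\delta^{a}(uv)=u\,d_{\ell}\cdots d_{a+1}\), and this word has value \(\pi^{a}(\mu)\). Its length-\((\ell-a)\) suffix is \(\delta^{a}(v)\), so by the part already proved, \(\rho_{\ell-a}(\pi^{a}(\mu))=C(\delta^{a}(v))\). On the other side, \(C(\delta^{a}(v))=\pi^{a}(C(v))=\pi^{a}(\rho_{\ell}(\mu))\), by Proposition~\ref{prop:pi-deletion} applied to the word \(v\). Here \(v\) may have leading zeros, but \(C\) is invariant under \(\sim\).

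For the jump formula we use Theorem~\ref{thm:truncation-blocks}. The blocks \([T_{\ell}(r),T_{\ell}(r+1))\) partition \(\Z_{\ge0}\), and \(T_{\ell}\) is strictly increasing. Hence \(\pi^{\ell}\) is nondecreasing, and it increases from \(\mu-1\) to \(\mu\) exactly when \(\mu\) is a left endpoint \(T_{\ell}(r)\), which is exactly when \(\rho_{\ell}(\mu)=0\). In that case the increase is exactly \(1\): the previous block is \([T_{\ell}(r-1),T_{\ell}(r))\), which is nonempty, so \(\mu-1\) lies in the block over \(r-1\). Here \(r\ge1\), since \(\mu\ge1>0=T_{\ell}(0)\).

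Finally, for the depth transition, we pad so that the length is at least \(\ell+1\). The length-\((\ell+1)\) suffix is then \(d_{\ell+1}v\), whose value is \(d_{\ell+1}(2^{\ell+1}-1)+C(v)\). This gives \(\rho_{\ell+1}(\mu)=\rho_{\ell}(\mu)+d_{\ell+1}(2^{\ell+1}-1)\), with \(d_{\ell+1}=0\) beyond the word length.

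The main obstacle is bookkeeping rather than substance. The suffix words \(v\) may carry leading zeros, so they are not canonical, and every application of \(C\), \(\delta\) and Proposition~\ref{prop:pi-deletion} has to be checked to respect \(\sim\) and the fixed lengths. The remaining care is the edge cases \(\ell=0\) and \(\mu=0\), where \(\rho_{0}=0\) trivially.
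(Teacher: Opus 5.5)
Your proposal is correct and follows the paper's proof essentially step by step. It obtains the quotient term from the trailing-zero shift and deletion--parent conjugacy, and the range from admissibility and the fixed-length bijection. The commutation comes from deleting the same lower \(a\) digits from \(v\) and from \(uv\), the jump formula from the block structure of Theorem~\ref{thm:truncation-blocks}, and the depth transition from adjoining the digit \(d_{\ell+1}\). You are a little more explicit than the paper about why the splitting exists and about the leading-zero bookkeeping, but the route is the same.
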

\begin{proof}
Since \(C^{-1}(\mu)\sim uv\), one has
\[
\mu=C(uv)=C(u\mathtt{0}^{\ell})+C(v).
\]
The trailing-zero shift and deletion--parent conjugacy give
\[
C(u\mathtt{0}^{\ell})=T_{\ell}(C(u))=T_{\ell}(\pi^{\ell}(\mu)),
\]
which proves the quotient--remainder decomposition. The range of the remainder follows from admissibility and the fixed-length bijection. Deleting the same lower \(a\) digits from the suffix and from the full word gives
\[
\pi^{a}(\rho_{\ell}(\mu))
=
\rho_{\ell-a}(\pi^{a}(\mu)).
\]

It remains to prove the formula for \(\pi^{\ell}(\mu)-\pi^{\ell}(\mu-1)\). By Theorem~\ref{thm:truncation-blocks}, the function \(\pi^{\ell}\) is constant on each truncation block. If \(\rho_{\ell}(\mu)>0\), then \(\mu-1\) and \(\mu\) lie in the same block, and hence
\[
\pi^{\ell}(\mu)-\pi^{\ell}(\mu-1)=0.
\]
If \(\rho_{\ell}(\mu)=0\), then \(\mu=T_{\ell}(r)\), where \(r=\pi^{\ell}(\mu)\). Since \(\mu\ge1\), one has \(r\ge1\), and \(\mu-1\) is the final point of the preceding block. Therefore
\[
\pi^{\ell}(\mu)=r,
\qquad
\pi^{\ell}(\mu-1)=r-1,
\]
which proves
\[
\pi^{\ell}(\mu)-\pi^{\ell}(\mu-1)
=
\begin{cases}1,&\rho_{\ell}(\mu)=0,\\0,&\rho_{\ell}(\mu)>0.\end{cases}
\]
Finally, increasing the truncation depth from \(\ell\) to \(\ell+1\) adds the digit \(d_{\ell+1}\) with Mersenne weight \(2^{\ell+1}-1\), which gives the last identity.
\end{proof}

\begin{remark}[Nesting of truncation remainders]\label{rem:remainder-nesting}
For \(a,b\ge0\), the lower \(a+b\) digits split into the lower \(b\) digits and the \(a\) digits immediately above them. Hence
\[
\rho_{a+b}(\mu)
=
\rho_{b}(\mu)
+
T_{b}\left(\rho_{a}(\pi^{b}(\mu))\right).
\]
The depth transition in the preceding proposition is the case \(a=1\).
\end{remark}

\begin{corollary}[Row-difference identity]\label{cor:pi-row-difference}
For every \(\ell,\mu\ge0\),
\[
\pi^{\ell}(\mu)-\pi^{\ell+1}(\mu)=Z(\pi^{\ell}(\mu)).
\]
\end{corollary}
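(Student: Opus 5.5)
This identity is the definition of the parent map evaluated at a parent iterate, so the plan is to apply that definition directly. Set \(s:=\pi^{\ell}(\mu)\), which is a nonnegative integer. The map \(\pi\) was defined on all of \(\Z_{\ge0}\) by \(\pi(r)=r-Z(r)\). Taking \(r=s\) gives
\[
\pi^{\ell+1}(\mu)=\pi(s)=s-Z(s).
\]
Rearranging yields \(s-\pi(s)=Z(s)\), which is exactly the asserted identity
\[
\pi^{\ell}(\mu)-\pi^{\ell+1}(\mu)=Z(\pi^{\ell}(\mu)).
\]

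To show how this fits the preceding digit-deletion picture, I would also record the word-level reading. Let \(x_{\ell}=\delta^{\ell}(C^{-1}(\mu))\). Proposition~\ref{prop:pi-deletion} gives \(C(x_{\ell})=\pi^{\ell}(\mu)\) and \(C(\delta(x_{\ell}))=\pi^{\ell+1}(\mu)\). Proposition~\ref{prop:CBP} gives \(C(x_{\ell})-C(\delta(x_{\ell}))=B(x_{\ell})\). Since \(x_{\ell}\sim C^{-1}(\pi^{\ell}(\mu))\) and \(B\) is unchanged by leading zeros, Proposition~\ref{prop:B-counting} identifies \(B(x_{\ell})\) with \(Z(\pi^{\ell}(\mu))\). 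This second route gives the same identity and shows that the row difference is the binary companion of the truncated word.

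No step is a real obstacle. The only point to check is that \(\pi^{\ell}(\mu)\) lies in \(\Z_{\ge0}\), where \(\pi\) and \(Z\) are defined. This holds because \(0\le\pi(r)\le r\) for every \(r\ge0\), since \(\pi(r)\) counts the nonbinary-side values below \(r\). The case \(\ell=0\) is just the definition of \(\pi\).
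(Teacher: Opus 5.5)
Your proof is correct and matches the paper's argument, which likewise just substitutes \(r=\pi^{\ell}(\mu)\) into the definition \(\pi(r)=r-Z(r)\). Your check that \(\pi^{\ell}(\mu)\ge0\) is a reasonable detail to make explicit, and the word-level reading through Propositions~\ref{prop:CBP} and~\ref{prop:B-counting} is valid but not needed.
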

\begin{proof}
Apply \(r-\pi(r)=Z(r)\) with \(r=\pi^{\ell}(\mu)\).
\end{proof}

These truncation structures will be used in the finite-depth application to distinguish block beginnings from block interiors and to determine the core window residues.

\section{The Mersenne Tau Function}
\subsection{Row Differences and Digit Recovery}
We encode the parent orbit by a family of tail sums. Their first and second row differences recover the parent iterates and the counting function, respectively. These identities also give a direct reconstruction formula for the Mersenne digits.
For \(\ell\ge-1\), define
\[
G_{\ell}(\mu):=\sum_{a=\ell+1}^{\infty}\pi^{a}(\mu).
\]
For \(\ell=-1\), the sum starts at \(a=0\). If \(C^{-1}(\mu)\) has length \(m\), then \(\pi^{a}(\mu)=0\) for \(a\ge m\), so the sum is finite. We call \(G_{\ell}\) the Mersenne tau function. We regard \(\ell\) as its row index.
\begin{proposition}[Row differences of the Mersenne tau function]\label{prop:tau-row-differences}
For \(\ell\ge0\),
\[
G_{\ell-1}(\mu)-G_{\ell}(\mu)=\pi^{\ell}(\mu),
\]
and
\[
G_{\ell-1}(\mu)-2G_{\ell}(\mu)+G_{\ell+1}(\mu)=Z(\pi^{\ell}(\mu)).
\]
\end{proposition}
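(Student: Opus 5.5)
The plan is to work directly from the definition of \(G_{\ell}\) as a tail sum of parent iterates. First I will check that every sum involved is finite. If \(C^{-1}(\mu)\) has length \(m\), then Proposition~\ref{prop:pi-deletion} gives \(\pi^{a}(\mu)=C(\delta^{a}(C^{-1}(\mu)))=0\) for \(a\ge m\). Hence each \(G_{\ell}(\mu)\) with \(\ell\ge-1\) is a finite sum, and the rows can be subtracted termwise without any convergence issue.

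For the first identity, fix \(\ell\ge0\) and write
\[
G_{\ell-1}(\mu)=\sum_{a=\ell}^{\infty}\pi^{a}(\mu)=\pi^{\ell}(\mu)+\sum_{a=\ell+1}^{\infty}\pi^{a}(\mu)=\pi^{\ell}(\mu)+G_{\ell}(\mu).
\]
When \(\ell=0\), the sum defining \(G_{-1}\) starts at \(a=0\), so the same splitting applies with \(\pi^{0}(\mu)=\mu\). This proves \(G_{\ell-1}(\mu)-G_{\ell}(\mu)=\pi^{\ell}(\mu)\) for all \(\ell\ge0\).

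For the second identity, I will apply the first identity at rows \(\ell\) and \(\ell+1\), which is legitimate since \(\ell+1\ge0\). This gives
\[
G_{\ell-1}(\mu)-2G_{\ell}(\mu)+G_{\ell+1}(\mu)
=\bigl(G_{\ell-1}(\mu)-G_{\ell}(\mu)\bigr)-\bigl(G_{\ell}(\mu)-G_{\ell+1}(\mu)\bigr)
=\pi^{\ell}(\mu)-\pi^{\ell+1}(\mu).
\]
Corollary~\ref{cor:pi-row-difference} then identifies the right-hand side with \(Z(\pi^{\ell}(\mu))\). Equivalently, one can apply \(r-\pi(r)=Z(r)\) directly with \(r=\pi^{\ell}(\mu)\).

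There is no real obstacle here. The only points needing care are the finiteness of the tail sums and the convention that \(G_{-1}\) includes the \(a=0\) term \(\mu\), so that the case \(\ell=0\) is covered uniformly.
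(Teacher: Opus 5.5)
Your proposal is correct and follows essentially the same route as the paper: the first identity comes from splitting off the \(a=\ell\) term of the finite tail sum, and the second from \(\pi^{\ell}(\mu)-\pi^{\ell+1}(\mu)=Z(\pi^{\ell}(\mu))\). Your explicit remarks on finiteness and on the \(G_{-1}\) convention at \(\ell=0\) make precise what the paper leaves implicit.
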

\begin{proof}
Subtracting the two finite sums leaves only the term \(\pi^{\ell}(\mu)\), which proves the first identity. The second follows from
\[
\pi^{\ell}(\mu)-\pi^{\ell+1}(\mu)=Z(\pi^{\ell}(\mu)).
\]
\end{proof}
We now combine the digit-reconstruction formula with the second row-difference identity.
\begin{corollary}[Tau-digit recovery]\label{cor:tau-digit-recovery}
Let \(C^{-1}(\mu)=d_{m}\cdots d_{1}\), with \(d_{i}=0\) for \(i>m\). Then, for every \(\ell\ge0\),
\[
d_{\ell+1}
=
G_{\ell-1}(\mu)-4G_{\ell}(\mu)+5G_{\ell+1}(\mu)-2G_{\ell+2}(\mu).
\]
Consequently,
\[
\frac{\rho_{\ell+1}(\mu)-\rho_{\ell}(\mu)}{2^{\ell+1}-1}
=
G_{\ell-1}(\mu)-4G_{\ell}(\mu)+5G_{\ell+1}(\mu)-2G_{\ell+2}(\mu).
\]
\end{corollary}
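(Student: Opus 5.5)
The plan is to combine the digit-reconstruction formula, Proposition~\ref{prop:digit-recovery}, with the second row-difference identity of Proposition~\ref{prop:tau-row-differences}. Applying Proposition~\ref{prop:digit-recovery} with \(r=\mu\) and \(i=\ell+1\) gives
\[
d_{\ell+1}=Z(\pi^{\ell}(\mu))-2Z(\pi^{\ell+1}(\mu)).
\]
The convention \(d_{i}=0\) for \(i>m\) is already built into that proposition, so no separate case is needed for \(\ell\ge m\).

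Next I would express each of the two \(Z\)-terms through tau rows. The second identity of Proposition~\ref{prop:tau-row-differences} at row \(\ell\) gives
\[
Z(\pi^{\ell}(\mu))=G_{\ell-1}(\mu)-2G_{\ell}(\mu)+G_{\ell+1}(\mu).
\]
At row \(\ell+1\), which is admissible since \(\ell+1\ge0\), it gives
\[
Z(\pi^{\ell+1}(\mu))=G_{\ell}(\mu)-2G_{\ell+1}(\mu)+G_{\ell+2}(\mu).
\]
Substituting both into the formula for \(d_{\ell+1}\) and collecting coefficients yields
\[
G_{\ell-1}-(2+2)G_{\ell}+(1+4)G_{\ell+1}-2G_{\ell+2},
\]
which is the asserted combination with coefficients \(1,-4,5,-2\). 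One point to check is that the row index \(\ell-1\ge-1\) is allowed in the definition of \(G\). It is, since \(G_{-1}\) is defined with the sum starting at \(a=0\).

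For the consequence, I would use the depth-transition identity at the end of Proposition~\ref{prop:quotient-remainder}:
\[
\rho_{\ell+1}(\mu)-\rho_{\ell}(\mu)=d_{\ell+1}(2^{\ell+1}-1).
\]
Dividing by \(2^{\ell+1}-1\), which is positive for \(\ell\ge0\), turns the left-hand side into \(d_{\ell+1}\), and the first part then finishes the argument.

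There is no real obstacle here. The only care needed is index bookkeeping: checking that both row-difference identities are applied within their stated ranges, and that the digit formula's zero convention for \(i>m\) matches the vanishing of the \(\pi\)-iterates beyond the word length.
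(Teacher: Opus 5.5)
Your proposal is correct and is essentially the paper's proof. You apply the digit-reconstruction formula with \(r=\mu\) and \(i=\ell+1\), substitute the second row-difference identity at rows \(\ell\) and \(\ell+1\), and obtain the remainder version from the depth-transition identity \(\rho_{\ell+1}(\mu)=\rho_{\ell}(\mu)+d_{\ell+1}(2^{\ell+1}-1)\), exactly as the paper does.
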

\begin{proof}
By Proposition~\ref{prop:tau-row-differences},
\[
Z\bigl(\pi^{\ell}(\mu)\bigr)
=
G_{\ell-1}(\mu)-2G_{\ell}(\mu)+G_{\ell+1}(\mu),
\]
and
\[
Z\bigl(\pi^{\ell+1}(\mu)\bigr)
=
G_{\ell}(\mu)-2G_{\ell+1}(\mu)+G_{\ell+2}(\mu).
\]
Applying \eqref{eq:digit-recovery} with \(r=\mu\) and \(i=\ell+1\), we obtain
\[
\begin{aligned}
d_{\ell+1}
&=
Z\bigl(\pi^{\ell}(\mu)\bigr)
-2Z\bigl(\pi^{\ell+1}(\mu)\bigr)\\
&=
G_{\ell-1}(\mu)-4G_{\ell}(\mu)
+5G_{\ell+1}(\mu)-2G_{\ell+2}(\mu).
\end{aligned}
\]
The second identity follows from the remainder depth transition.
\end{proof}

\subsection{Truncation Boundaries and the Diagonal Tau Defect}
We next compare the second row differences at diagonally adjacent points. The block beginning and the block interior require different descriptions. We first identify the predecessor of a truncation block with the successor sequence.
\begin{lemma}[Predecessor of a truncation block]\label{lem:truncation-block-predecessor}
For every \(\ell\ge1\) and \(r\ge1\),
\[
\pi^{\ell-1}\bigl(T_{\ell}(r)-1\bigr)=y_{r}.
\]
\end{lemma}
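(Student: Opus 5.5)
The plan is to compute the Mersenne word of \(T_{\ell}(r)-1\) explicitly and then delete its lowest \(\ell-1\) digits. Let \(u=C^{-1}(r)\) with \(r\ge1\). By Lemma~\ref{lem:zero-shift}, \(T_{\ell}(r)=C(u\mathtt{0}^{\ell})\), and by Theorem~\ref{thm:truncation-blocks} the integer \(T_{\ell}(r)-1\) is the last point of the \(\ell\)-truncation block over \(r-1\). In particular \(\pi^{\ell}(T_{\ell}(r)-1)=r-1\). Set \(u'=C^{-1}(r-1)\). There are two cases, according to whether \(u'\in\overline J\) or \(u'\in J\).

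If \(u'\in\overline J\), the block over \(r-1\) consists of the values \(C(u'v)\) with \(v\in\LLang_{\ell}\). Its last point corresponds to the suffix \(v\) of maximal value \(2^{\ell+1}-2\), namely \(v=\mathtt{2}\mathtt{0}^{\ell-1}\). So \(C^{-1}(T_{\ell}(r)-1)\sim u'\mathtt{2}\mathtt{0}^{\ell-1}\). If \(u'\in J\), the block is the singleton \(\{C(u'\mathtt{0}^{\ell})\}\), and \(C^{-1}(T_{\ell}(r)-1)\sim u'\mathtt{0}^{\ell}\).

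In both cases we apply \(\delta^{\ell-1}\), which is allowed because \(\ell\ge1\) and deletion is compatible with \(\sim\). This gives \(u'\mathtt{2}\) in the first case and \(u'\mathtt{0}\) in the second. By Proposition~\ref{prop:deltaJ-bijection}, each of these words is exactly \(\delta_{J}^{-1}(u')\). Proposition~\ref{prop:coordinate-map} then identifies \(\delta_{J}^{-1}(C^{-1}(r-1))\) with \(S_{r-1}\). Using Proposition~\ref{prop:pi-deletion} to translate deletion into \(\pi^{\ell-1}\) on values, we get
\[
\pi^{\ell-1}\bigl(T_{\ell}(r)-1\bigr)=C(S_{r-1})=y_{r},
\]
where the last equality is Proposition~\ref{prop:successor-orbit}.

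The only delicate step is identifying the last element of the block over \(r-1\) when \(u'\in\overline J\), that is, checking that the maximal length-\(\ell\) suffix is \(\mathtt{2}\mathtt{0}^{\ell-1}\). This follows from Proposition~\ref{prop:fixed-bijection}: \(C_{\ell}\) is a bijection onto \(\{0,\ldots,2^{\ell+1}-2\}\), and \(C(\mathtt{2}\mathtt{0}^{\ell-1})=2(2^{\ell}-1)\) attains the maximum. The word \(u'\mathtt{2}\mathtt{0}^{\ell-1}\) is admissible because \(u'\) contains no digit \(\mathtt{2}\). For the value of the last point, we can either take \(C(u'\mathtt{0}^{\ell})+2^{\ell+1}-2\) directly or use \(T_{\ell}(r)-1\) together with Corollary~\ref{cor:block-length}; both give the same integer, so the two descriptions are consistent.
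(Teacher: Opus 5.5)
Your proof is correct and follows the paper's own argument essentially step for step: the same two-case description of the final word of the \(\ell\)-truncation block over \(r-1\) (\(x\mathtt{2}\mathtt{0}^{\ell-1}\) or \(x\mathtt{0}^{\ell}\)), then deletion of the lowest \(\ell-1\) digits to reach \(\delta_{J}^{-1}(C^{-1}(r-1))=S_{r-1}\), then evaluation by \(C\) to get \(y_{r}\). The auxiliary word \(u=C^{-1}(r)\) is never used and can be dropped.
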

\begin{proof}
Let
\[
x:=C^{-1}(r-1).
\]
The integer \(T_{\ell}(r)-1\) is the final point of the \(\ell\)-truncation block over \(r-1\). We determine its word representation according to the side containing \(x\).

Suppose first that \(x\in\overline J\). The block over \(r-1\) is then a full truncation block. Its words are obtained by appending all suffixes \(v\in\LLang_{\ell}\) to \(x\), and their values are
\[
C(xv)=T_{\ell}(r-1)+C(v).
\]
The largest value of a word in \(\LLang_{\ell}\) is \(2^{\ell+1}-2\), represented by \(\mathtt{2}\mathtt{0}^{\ell-1}\). Hence the final word of the block is
\[
x\mathtt{2}\mathtt{0}^{\ell-1},
\]
so that
\[
C^{-1}\bigl(T_{\ell}(r)-1\bigr)
\sim
x\mathtt{2}\mathtt{0}^{\ell-1}.
\]
Deleting the lowest \(\ell-1\) digits leaves \(x\mathtt{2}\).

Suppose next that \(x\in J\). Admissibility then forces the appended length-\(\ell\) suffix to be \(\mathtt{0}^{\ell}\). The block over \(r-1\) has length one, and its unique word is
\[
x\mathtt{0}^{\ell}.
\]
Thus
\[
C^{-1}\bigl(T_{\ell}(r)-1\bigr)
\sim
x\mathtt{0}^{\ell},
\]
and deleting the lowest \(\ell-1\) digits leaves \(x\mathtt{0}\).

By Proposition~\ref{prop:deltaJ-bijection}, the two remaining words are unified as
\[
\delta_{J}^{-1}(x)=
\begin{cases}
x\mathtt{2},&x\in\overline J,\\
x\mathtt{0},&x\in J.
\end{cases}
\]
Therefore,
\[
\delta^{\ell-1}
\left(C^{-1}\bigl(T_{\ell}(r)-1\bigr)\right)
\sim
\delta_{J}^{-1}\bigl(C^{-1}(r-1)\bigr).
\]
Proposition~\ref{prop:coordinate-map} gives
\[
\delta_{J}^{-1}\bigl(C^{-1}(r-1)\bigr)=S_{r-1}.
\]
Applying \(C\), using deletion--parent conjugacy and invariance under leading-zero equivalence, yields
\[
\begin{aligned}
\pi^{\ell-1}\bigl(T_{\ell}(r)-1\bigr)
&=
C\left(
\delta^{\ell-1}
\left(C^{-1}\bigl(T_{\ell}(r)-1\bigr)\right)
\right)\\
&=C(S_{r-1})
=y_{r}.
\end{aligned}
\]
\end{proof}

\begin{corollary}[Boundary counting identity]\label{cor:truncation-boundary-count}
For every \(\ell\ge1\) and \(r\ge1\),
\[
Z\left(\pi^{\ell-1}\bigl(T_{\ell}(r)-1\bigr)\right)=2Z(r).
\]
\end{corollary}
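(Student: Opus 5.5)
By Lemma~\ref{lem:truncation-block-predecessor}, the left-hand side equals \(Z(y_{r})\). The plan is therefore to prove \(Z(y_{r})=2Z(r)\) for \(r\ge1\), a statement that no longer involves \(\ell\).

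The most direct route goes through the successor orbit. By Proposition~\ref{prop:successor-orbit}, \(y_{r}=C(S_{r-1})\). Proposition~\ref{prop:B-counting} then gives
\[
Z(y_{r})=B(S_{r-1}).
\]
Next, write \(x=C^{-1}(r-1)\). Proposition~\ref{prop:coordinate-map} gives \(S_{r-1}=\delta_{J}^{-1}(x)\), which is either \(x\mathtt{2}\) or \(x\mathtt{0}\).

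I split into the two cases of Proposition~\ref{prop:deltaJ-bijection}.
\begin{itemize}
\item If \(x\in\overline J\), then \(B(x\mathtt{2})=2B(x)+2\). By Proposition~\ref{prop:B-counting}, \(B(x)=Z(r-1)\), and since \(r-1\in C(\overline J)\) we have \(Z(r)=Z(r-1)+1\). Hence \(B(x\mathtt{2})=2Z(r)\).
\item If \(x\in J\), then \(B(x\mathtt{0})=2B(x)=2Z(r-1)\). Here \(r-1\in C(J)\) gives \(Z(r)=Z(r-1)\), so again \(B(x\mathtt{0})=2Z(r)\).
\end{itemize}
In both cases \(Z(y_{r})=2Z(r)\).

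As a cross-check, Corollary~\ref{cor:Z-Conolly} gives \(y_{r}=2Z(r)+r-1\), so \(B(S_{r-1})=C(S_{r-1})-(r-1)=y_{r}-(r-1)=2Z(r)\), using \(C(S_{n})=B(S_{n})+n\). This version avoids the case split altogether.

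I expect no real obstacle. The only care needed is the index shift between \(S_{r-1}\) and \(y_{r}\), and the fact that \(B\) is unchanged by leading-zero equivalence, so that Proposition~\ref{prop:B-counting} applies to \(S_{r-1}\), which is already canonical.
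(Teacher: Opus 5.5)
Your proof is correct, and your ``cross-check'' is exactly the paper's argument. After Lemma~\ref{lem:truncation-block-predecessor}, the paper writes $Z(y_{r})=B(S_{r-1})=y_{r}-r+1$ using Propositions~\ref{prop:B-counting} and \ref{prop:successor-orbit}. It then invokes $y_{r}=2Z(r)+r-1$ from Corollary~\ref{cor:Z-Conolly}.

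Your main route rests on different inputs. Instead of that global identity, you compute $B(S_{r-1})$ locally from the explicit form of $\delta_{J}^{-1}$ in Proposition~\ref{prop:deltaJ-bijection} and from the increment $Z(r)-Z(r-1)=\chi_{\overline J}(r-1)$. This repeats the case split already made in the proof of Lemma~\ref{lem:truncation-block-predecessor}.

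What your route buys is independence from Corollary~\ref{cor:Z-Conolly}. In the paper, that corollary is derived through the Conolly sequence, either via the OEIS relation attributed to Cloitre or via the frequency argument of Remark~\ref{rem:Conolly-frequency}. In fact, combining your identity $B(S_{r-1})=2Z(r)$ with $C(S_{r-1})=B(S_{r-1})+r-1$ gives an intrinsic proof of $y_{r}=2Z(r)+r-1$ itself. The same identity can also be read off from Corollary~\ref{cor:A055938-counting}, so the paper's route could likewise be made Conolly-free.

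The paper's version is the shorter, one-line computation once that identity is in hand.
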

\begin{proof}
Lemma~\ref{lem:truncation-block-predecessor} gives
\[
Z\left(\pi^{\ell-1}\bigl(T_{\ell}(r)-1\bigr)\right)=Z(y_{r}).
\]
Since \(y_{r}=C(S_{r-1})\), Proposition~\ref{prop:B-counting} and Proposition~\ref{prop:successor-orbit} give
\[
Z(y_{r})=B(S_{r-1})=y_{r}-r+1.
\]
The identity \(y_{r}=2Z(r)+r-1\) therefore yields \(Z(y_{r})=2Z(r)\).
\end{proof}

By the second row-difference identity, the two terms below are second differences of the Mersenne tau function at diagonally adjacent points. Their defect is determined by the truncation remainder.
\begin{theorem}[Diagonal tau defect formula]\label{thm:diagonal-defect}
For \(\ell\ge1\) and \(\mu\ge1\),
\begin{equation}\label{eq:diagonal-defect}
Z\bigl(\pi^{\ell-1}(\mu-1)\bigr)
-2Z\bigl(\pi^{\ell}(\mu)\bigr)
=
\left\lfloor\frac{\rho_{\ell}(\mu)}{2^{\ell}}\right\rfloor.
\end{equation}
\end{theorem}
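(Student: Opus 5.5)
The plan is to split according to whether \(\mu\) is the beginning of an \(\ell\)-truncation block or lies in its interior, that is, according to whether \(\rho_{\ell}(\mu)=0\) or \(\rho_{\ell}(\mu)>0\). Throughout, set \(r:=\pi^{\ell}(\mu)\).

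\emph{Block beginning.} If \(\rho_{\ell}(\mu)=0\), then \(\mu=T_{\ell}(r)\) with \(r\ge1\), since \(\mu\ge1\). The right-hand side of \eqref{eq:diagonal-defect} is then \(0\). Corollary~\ref{cor:truncation-boundary-count} gives
\[
Z\bigl(\pi^{\ell-1}(\mu-1)\bigr)=Z\bigl(\pi^{\ell-1}(T_{\ell}(r)-1)\bigr)=2Z(r)=2Z\bigl(\pi^{\ell}(\mu)\bigr),
\]
so the left-hand side also vanishes.

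\emph{Block interior.} If \(\rho_{\ell}(\mu)>0\), Proposition~\ref{prop:quotient-remainder} shows that \(r\in C(\overline J)\). It also shows that \(\mu-1\) lies in the same block, so \(\rho_{\ell}(\mu-1)=\rho_{\ell}(\mu)-1\) and \(\pi^{\ell}(\mu-1)=r\). Write \(C^{-1}(\mu-1)\sim uv\) with \(u=C^{-1}(r)\in\overline J\) and \(v\in\LLang_{\ell}\), so that \(C(v)=\rho_{\ell}(\mu)-1\). Deleting \(\ell-1\) digits leaves the word \(u e\), where \(e\) is the highest digit of \(v\). Hence \(\pi^{\ell-1}(\mu-1)=C(ue)\). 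Since \(u\) is binary, \(Z(C(ue))=B(ue)=2B(u)+e=2Z(r)+e\) by Proposition~\ref{prop:B-counting}. The left-hand side of \eqref{eq:diagonal-defect} therefore equals \(e\).

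It remains to identify \(e\) with \(\lfloor\rho_{\ell}(\mu)/2^{\ell}\rfloor\). This is the step that needs care, because it has to be checked against the sector ranges of the fixed-length bijection \(C_{\ell}\). By the sector decomposition in the proof of Proposition~\ref{prop:fixed-bijection}, a word \(v\in\LLang_{\ell}\) has:
\begin{itemize}
\item highest digit \(\mathtt{0}\) exactly when \(C(v)\le 2^{\ell}-2\);
\item highest digit \(\mathtt{1}\) exactly when \(2^{\ell}-1\le C(v)\le 2^{\ell+1}-3\);
\item highest digit \(\mathtt{2}\) exactly when \(C(v)=2^{\ell+1}-2\).
\end{itemize}
With \(C(v)=\rho_{\ell}(\mu)-1\), these three cases become \(1\le\rho_{\ell}(\mu)\le 2^{\ell}-1\), \(2^{\ell}\le\rho_{\ell}(\mu)\le 2^{\ell+1}-2\), and \(\rho_{\ell}(\mu)=2^{\ell+1}-1\). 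In each case \(e\) equals \(\lfloor\rho_{\ell}(\mu)/2^{\ell}\rfloor\), namely \(0\), \(1\) and \(2\) respectively.

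A little care is also needed in the degenerate case \(\ell=1\), where \(v\) is the single digit \(e\) and nothing is deleted. The same range check covers it, since \(\rho_{1}(\mu)-1\in\{0,1\}\) gives \(e=\rho_{1}(\mu)-1=\lfloor\rho_{1}(\mu)/2\rfloor\).

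Finally, the reading of \(e\) as the \emph{upper-sector digit} of the truncation remainder should follow from the admissibility constraint: the shift by one in passing from \(\mu\) to \(\mu-1\) is exactly what aligns the sector boundaries with multiples of \(2^{\ell}\).
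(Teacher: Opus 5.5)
Your case split and main steps coincide with the paper's proof. You handle the block beginning with Corollary~\ref{cor:truncation-boundary-count}. In the block interior you use \(C^{-1}(\mu-1)\sim uv\), then \(\pi^{\ell-1}(\mu-1)=C(ue)\) and \(Z(C(ue))=B(ue)=2Z(r)+e\).

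The final identification of \(e\) with the floor, however, contains a false claim. In your third case, \(\rho_{\ell}(\mu)=2^{\ell+1}-1\), one has \(\lfloor(2^{\ell+1}-1)/2^{\ell}\rfloor=1\), not \(2\), because \(2^{\ell+1}-1<2^{\ell+1}\). If that case could occur, the left-hand side of \eqref{eq:diagonal-defect} would equal \(e=2\) while the right-hand side would equal \(1\). Your range check would then refute the formula rather than verify it.

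What rescues the argument is that this case is vacuous, and the proof must say so explicitly. Since \(r\in C(\overline J)\), Proposition~\ref{prop:quotient-remainder} gives \(1\le\rho_{\ell}(\mu)\le 2^{\ell+1}-2\). Hence \(C(v)=\rho_{\ell}(\mu)-1\le 2^{\ell+1}-3\), the highest digit of \(v\) cannot be \(\mathtt{2}\), and \(e\in\{0,1\}\). This is exactly how the paper closes the proof.

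You in fact used this bound implicitly in your \(\ell=1\) remark, where you wrote \(\rho_{1}(\mu)-1\in\{0,1\}\). That is inconsistent with admitting the third case for general \(\ell\). With the third case excluded by the remainder bound, your proof is complete.

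The closing paragraph about the upper-sector digit and admissibility is not an argument and can be dropped. The sector-range computation already carries the whole content.
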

\begin{proof}
Let
\[
r:=\pi^{\ell}(\mu),
\qquad
\rho:=\rho_{\ell}(\mu).
\]
We distinguish the beginning and the interior of the \(\ell\)-truncation block over \(r\).

Suppose first that \(\rho=0\). By the definition of the truncation remainder,
\[
\rho=0
\quad\Longleftrightarrow\quad
\mu=T_{\ell}(r).
\]
Since \(\mu\ge1\) and \(T_{\ell}(0)=0\), one has \(r\ge1\). Corollary~\ref{cor:truncation-boundary-count} gives
\[
Z\bigl(\pi^{\ell-1}(\mu-1)\bigr)
=2Z(r)
=2Z\bigl(\pi^{\ell}(\mu)\bigr).
\]
This proves \eqref{eq:diagonal-defect} because the right-hand side is zero.

Suppose next that \(\rho>0\). Then \(\mu-1\) remains in the same truncation block as \(\mu\). A block over a value in \(C(J)\) has length one, so necessarily \(r\in C(\overline J)\). Let
\[
u:=C^{-1}(r)\in\overline J.
\]
Since
\[
\mu-1=T_{\ell}(r)+(\rho-1),
\]
the fixed-length bijection gives a unique \(v\in\LLang_{\ell}\) such that
\[
C^{-1}(\mu-1)\sim uv,
\qquad
C(v)=\rho-1.
\]
Write \(v=e v'\), where \(e\) is the highest digit and \(v'\) has length \(\ell-1\). Deleting the lowest \(\ell-1\) digits gives
\[
\pi^{\ell-1}(\mu-1)=C(ue).
\]
The values of length-\(\ell\) words with highest digit \(\mathtt{0}\) range from \(0\) to \(2^{\ell}-2\), while those with highest digit \(\mathtt{1}\) range from \(2^{\ell}-1\) to \(2^{\ell+1}-3\). Since \(C(v)=\rho-1\), it follows that
\[
e=
\begin{cases}
0,&1\le\rho\le2^{\ell}-1,\\
1,&2^{\ell}\le\rho\le2^{\ell+1}-2.
\end{cases}
\]
The highest digit \(\mathtt{2}\) cannot occur because it would give \(C(v)=2^{\ell+1}-2\), whereas \(C(v)=\rho-1\le2^{\ell+1}-3\). Hence
\[
e=\left\lfloor\frac{\rho}{2^{\ell}}\right\rfloor\in\{0,1\}.
\]
Since \(u\) and \(ue\) belong to the binary side, Proposition~\ref{prop:B-counting} gives
\[
\begin{aligned}
Z\bigl(\pi^{\ell-1}(\mu-1)\bigr)
&=B(ue)\\
&=2B(u)+e\\
&=2Z(r)+\left\lfloor\frac{\rho}{2^{\ell}}\right\rfloor.
\end{aligned}
\]
Substituting \(r=\pi^{\ell}(\mu)\) proves \eqref{eq:diagonal-defect}.
\end{proof}

\begin{remark}[Extension to negative rows]\label{rem:negative-tau-rows}
Let \(C^{-1}(\mu)=d_{m}\cdots d_{1}\). A direct summation of the digit contributions shows that, for \(\ell\ge-1\),
\[
G_{\ell}(\mu)=\sum_{i=\max(1,\ell+2)}^{m}d_{i}\left(2^{i-\ell}-i+\ell-1\right).
\]
For \(\ell<-1\), we use the right-hand side as the definition of \(G_{\ell}(\mu)\). This extends \(G_{\ell}\) to every \(\ell\in\Z\). Direct subtraction then gives
\[
G_{\ell-1}(\mu)-G_{\ell}(\mu)=
\begin{cases}
\pi^{\ell}(\mu),&\ell\ge0,\\
T_{-\ell}(\mu),&\ell<0.
\end{cases}
\]
Thus the nonnegative rows correspond to digit deletion, while the negative rows correspond to zero insertion. The negative-row construction is an algebraic extension of the Mersenne tau rows. It does not in general agree with the reflected finite closure used for the finite-field BBS profile. Accordingly, the reflection seam will be treated by complement symmetry rather than by a formal continuation to negative rows.
\end{remark}
\section{Complement Symmetry of \(Z\)}
The reflection seam in the finite-depth application will involve two complementary arguments of \(Z\). We prove the required symmetry for binary words of an arbitrary fixed length and then record its finite-depth specialization.
For \(m\ge1\), define \(\Lambda_{m}:=C(\mathtt{1}^{m})+1=2^{m+1}-1-m\).
\begin{lemma}[Binary complement symmetry]\label{lem:Z-complement}
For every \(m\ge1\) and \(0\le j\le\Lambda_{m}\),
\[
Z(j)+Z(\Lambda_{m}-j)=2^{m}.
\]
\end{lemma}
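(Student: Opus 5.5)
The plan is to reduce both counts to the binary words of length exactly \(m\) and then pair them off using the digit-complement involution. The starting observation is that \(\Lambda_{m}-1=C(\mathtt{1}^{m})\) is the largest value of a word in \(\{\mathtt{0},\mathtt{1}\}^{m}\).

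\textbf{Step 1: only length-\(m\) words contribute.} I will show that for \(0\le j\le\Lambda_{m}\),
\[
Z(j)=\#\{u\in\{\mathtt{0},\mathtt{1}\}^{m}: C(u)<j\}.
\]
Every canonical binary word of length at least \(m+1\) has value at least \(2^{m+1}-1\). Since \(m\ge1\), we have \(\Lambda_{m}=2^{m+1}-1-m<2^{m+1}-1\), so such a word never has value \(<j\). Conversely, every canonical binary word of length at most \(m\) becomes a unique word of \(\{\mathtt{0},\mathtt{1}\}^{m}\) after padding with leading zeros. Distinct binary words have distinct values by Proposition~\ref{prop:fixed-bijection}, so this padding is a bijection onto \(\{\mathtt{0},\mathtt{1}\}^{m}\). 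This gives the displayed formula. Because \(0\le\Lambda_{m}-j\le\Lambda_{m}\), the same formula applies to \(Z(\Lambda_{m}-j)\).

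\textbf{Step 2: the complement involution.} For \(u=d_{m}\cdots d_{1}\in\{\mathtt{0},\mathtt{1}\}^{m}\), define \(\bar u\) by replacing each \(d_{k}\) with \(1-d_{k}\). This is an involution on \(\{\mathtt{0},\mathtt{1}\}^{m}\), and summing the weights gives
\[
C(u)+C(\bar u)=C(\mathtt{1}^{m})=\Lambda_{m}-1.
\]

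\textbf{Step 3: combine.} Using the involution,
\[
Z(\Lambda_{m}-j)=\#\{u: C(u)<\Lambda_{m}-j\}=\#\{u: C(\bar u)>j-1\}=\#\{u: C(u)\ge j\}.
\]
Adding this to the expression for \(Z(j)\) from Step~1 partitions all \(2^{m}\) words of \(\{\mathtt{0},\mathtt{1}\}^{m}\), so
\[
Z(j)+Z(\Lambda_{m}-j)=2^{m}.
\]

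The only delicate point is Step~1: checking that the cutoff \(\Lambda_{m}\) is small enough that no binary value of length \(m+1\) or more is counted. This is exactly where the hypothesis \(m\ge1\) is used, since it makes the inequality \(\Lambda_{m}<2^{m+1}-1\) strict. The endpoint cases \(j=0\) and \(j=\Lambda_{m}\) are covered automatically, as \(Z(0)=0\) and \(Z(\Lambda_{m})=2^{m}\).
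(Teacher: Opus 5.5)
Your proof is correct and follows the paper's argument. You identify the binary-side values below \(\Lambda_m\) with the padded words of \(\{\mathtt{0},\mathtt{1}\}^m\), then use the digit-complement involution \(C(\bar u)=\Lambda_m-1-C(u)\) to show that the two counts are complementary. One small remark: Step~1 only needs \(\Lambda_m\le 2^{m+1}-1\), not strict inequality, so the hypothesis \(m\ge1\) is not actually used there; the identity also holds for \(m=0\).
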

\begin{proof}
After leading zeros are added when necessary, the \(2^{m}\) binary words of length \(m\) represent exactly the binary-side values in \([0,\Lambda_{m}-1]\). Let \(w=d_{m}\cdots d_{1}\), and define
\[
w^{\vee}:=(1-d_{m})\cdots(1-d_{1}).
\]
This map is an involution, and \(C(w^{\vee})=\Lambda_{m}-1-C(w)\). Hence \(C(w^{\vee})<\Lambda_{m}-j\) if and only if \(C(w)\ge j\). Thus the two terms in the assertion count complementary subsets of the binary words of length \(m\).
\end{proof}
\begin{corollary}[Finite-depth complementarity]\label{cor:seam-complementarity}
Let \(h\ge1\), \(K=2^{h+1}-1\), and \(\Lambda=K-h\). Then
\[
Z(j)+Z(\Lambda-j)=2^{h}
\qquad(0\le j\le\Lambda).
\]
\end{corollary}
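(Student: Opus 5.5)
This corollary is the specialization \(m=h+1\) of Lemma~\ref{lem:Z-complement}, so the plan is simply to identify the parameters. With \(m=h+1\), the lemma's constant is
\[
\Lambda_{h+1}=2^{h+2}-1-(h+1)=2^{h+2}-h-2 .
\]
For the corollary, \(\Lambda=K-h=2^{h+1}-1-h\). This equals \(\Lambda_{h}\), not \(\Lambda_{h+1}\). The correct choice is therefore \(m=h\), which is admissible since \(h\ge1\). The lemma then gives the right-hand side \(2^{m}=2^{h}\), matching the statement.

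The proof proceeds in three steps. First, check the identity
\[
\Lambda_{h}=C(\mathtt{1}^{h})+1=2^{h+1}-1-h=K-h=\Lambda .
\]
Second, observe that the range \(0\le j\le\Lambda\) is exactly the range \(0\le j\le\Lambda_{h}\) allowed in Lemma~\ref{lem:Z-complement}. Third, apply the lemma with \(m=h\) to obtain
\[
Z(j)+Z(\Lambda-j)=2^{h}.
\]

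The only potential obstacle is bookkeeping: choosing \(m=h\) rather than \(m=h+1\), even though \(K=K^{(h)}\) is the depth-\(h\) constant attached to words of length \(h+1\). A direct check at \(h=1\) confirms the choice. Here \(K=3\) and \(\Lambda=2\). The binary-side values below \(2\) are \(0\) and \(1\), so \(Z(0)=0\), \(Z(1)=1\), and \(Z(2)=2\). Then \(Z(0)+Z(2)=2\) and \(Z(1)+Z(1)=2\), both equal to \(2^{1}\), as required.
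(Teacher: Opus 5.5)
Your proposal is correct and matches the paper's proof: the paper also obtains the corollary as Lemma~\ref{lem:Z-complement} with \(m=h\), using \(\Lambda_{h}=2^{h+1}-1-h=K-h\) and \(2^{m}=2^{h}\). In a final write-up, delete your opening claim that this is the case \(m=h+1\); your own computation shows that choice gives the wrong constant.
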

\begin{proof}
This is Lemma~\ref{lem:Z-complement} with \(m=h\).
\end{proof}
\section{Finite-field BBS and the Finite-depth Profile}

We apply the counting and truncation structures developed above to the nested finite-depth one-soliton family of the finite-field BBS. We first recall the finite-field \(S\)-variable equation to be satisfied by the traveling-wave profile. We then construct a global integer-valued profile \(\sigma\) from the Mersenne parent iterates. The next section studies the inner and outer differences of this profile and proves the window-counting theorem.

Fix \(h\ge0\), and define
\[
K:=2^{h+1}-1,
\qquad
\Omega:=K+1,
\qquad
\Lambda:=K-h.
\]
We also define
\[
R:=3K-(h+1)=2K+\Lambda-1,
\]
\[
d:=(2h+1)K+h-1=2h\Omega+\Lambda-1,
\qquad
N:=d+1=2h\Omega+\Lambda.
\]
The parameter \(h\) is fixed throughout the remainder of the paper. We therefore omit the index \(h\) from quantities that depend on it.

\subsection{The finite-field polynomial \(M\) and the \(S\)-variable equation}

The construction in \cite{Yura2014} starts from the max-plus form of the ordinary BBS. Since a finite field has no order compatible with its field operations, the ordinary maximum and minimum functions cannot be used directly over a finite field. However, the transformations among the standard BBS equations use only certain algebraic properties of the maximum operation, in particular commutativity and compatibility with addition. This suggests replacing the maximum operation by a polynomial map over a finite field.

We work over \(\F_{3}\) and set \(L=1\). To distinguish finite-field quantities from the integer-valued functions introduced below, we place a hat on the finite-field variables corresponding to \(U\), \(S\), and \(G\) in \cite{Yura2014}. The polynomial used in this case is
\[
M(a,b)=2(a^{2}+ab+b^{2}+a+b).
\]
The following table compares the polynomial value \(M(a,b)\) in \(\F_{3}\) with the ordinary maximum of the integer representatives \(0,1,2\).
\[
\begin{array}{c|ccc@{\qquad}c|ccc}
M(a,b)
& b=0 & b=1 & b=2
&
\max(a,b)
& b=0 & b=1 & b=2
\\
\hline
a=0 & 0 & 1 & 0
&
a=0 & 0 & 1 & 2
\\
a=1 & 1 & 1 & 2
&
a=1 & 1 & 1 & 2
\\
a=2 & 0 & 2 & 2
&
a=2 & 2 & 2 & 2
\end{array}
\]
Thus the two tables differ only at
\[
(a,b)=(0,2)
\qquad\text{and}\qquad
(a,b)=(2,0).
\]
The polynomial \(M\) is not the ordinary maximum operation, but it plays the algebraic role required in the finite-field analogue of the BBS.

The finite-field BBS equation~(14) of \cite{Yura2014} is
\begin{equation}\label{eq:ffBBS-U}
\widehat U_{n}^{t+1}
=
M\left(
L-\widehat U_{n}^{t},
\sum_{i=-\infty}^{n-1}
\left(\widehat U_{i}^{t}-\widehat U_{i}^{t+1}\right)
\right)
-M(0,-L).
\end{equation}
For the traveling-wave solutions constructed below, \(\widehat U^{t}\) has finite support for every \(t\). Hence the sum in \eqref{eq:ffBBS-U} contains only finitely many nonzero terms.

The dependent variables are related by
\begin{equation}\label{eq:USG-transform}
\widehat U_{n}^{t}=\widehat S_{n}^{t}-\widehat S_{n-1}^{t},
\qquad
\widehat S_{n}^{t}=\widehat G_{n}^{t-1}-\widehat G_{n}^{t}.
\end{equation}
Under this transformation, \eqref{eq:ffBBS-U} is represented by the \(S\)-variable equation~(15) of \cite{Yura2014}:
\begin{equation}\label{eq:Yura-S-variable}
\widehat S_{n}^{t}-\widehat S_{n+1}^{t+1}
=
-M(0,-L)
+
M\left(0,\widehat S_{n+1}^{t}-\widehat S_{n}^{t+1}-L\right).
\end{equation}
For the present choice \(L=1\), one has \(M(0,-1)=M(0,2)=0\) in \(\F_{3}\), and hence \eqref{eq:Yura-S-variable} becomes
\begin{equation}\label{eq:S-plaquette}
\widehat S_{n}^{t}-\widehat S_{n+1}^{t+1}
=
M\left(0,\widehat S_{n+1}^{t}-\widehat S_{n}^{t+1}-1\right).
\end{equation}
Only the row \(a=0\) of the \(M\)-table is used below. For \(b\in\F_{3}\),
\[
M(0,b-1)=1
\quad\Longleftrightarrow\quad
b=2
\quad\text{in }\F_{3}.
\]
Thus \(M(0,b-1)=1\) exactly when \(b=2\) in \(\F_{3}\). This detection property reduces the finite-field traveling-wave equation to an integer window-counting rule.

We seek a traveling-wave solution with spatial parameter \(K\) and temporal shift parameter \(\Omega\). We therefore introduce the traveling-wave coordinate
\[
\xi=Kn-\Omega t
\]
and consider a profile of the form
\[
\widehat S_{n}^{t}=\widehat S(\xi).
\]
The corresponding finite-field BBS profile is related to \(\widehat S\) by
\begin{equation}\label{eq:U-from-S}
\widehat U(\xi)=\widehat S(\xi)-\widehat S(\xi-K).
\end{equation}
Under the traveling-wave substitution, the four vertices in \eqref{eq:S-plaquette} have the coordinates
\[
\begin{aligned}
\widehat S_{n}^{t}&=\widehat S(\xi),&
\widehat S_{n+1}^{t}&=\widehat S(\xi+K),\\
\widehat S_{n}^{t+1}&=\widehat S(\xi-\Omega),&
\widehat S_{n+1}^{t+1}&=\widehat S(\xi-1).
\end{aligned}
\]
Here \(K-\Omega=-1\). Therefore the traveling-wave equation to be verified is
\begin{equation}\label{eq:wave-S}
\widehat S(\xi)-\widehat S(\xi-1)
=
M\left(
0,
\widehat S(\xi+K)-\widehat S(\xi-\Omega)-1
\right).
\end{equation}
Since \(\Omega=K+1\), this is equation~(30) of \cite{Yura2014}.

Our purpose is to verify that the reduction modulo \(3\) of the integer profile constructed below satisfies \eqref{eq:wave-S}. We first construct this profile directly from the Mersenne parent iterates.

\subsection{Mersenne Construction of the Front Profile}

Theorem~5 of \cite{Yura2014} gives a finite-depth one-soliton family with
\[
K=2^{h+1}-1,
\qquad
\Omega=2^{h+1},
\qquad
h\ge1.
\]
For \(h=0\), the same parameters give the velocity-\(2\) case in Theorem~4 of \cite{Yura2014}. We include this endpoint in the construction below.

By Lemma~\ref{lem:fixed-pi}, for
\[
0\le\ell\le h+1
\qquad\text{and}\qquad
0\le\mu\le2K,
\]
one has
\[
\pi^{\ell}(\mu)
=
C\left(
\delta^{\ell}
\left(
C_{h+1}^{-1}(\mu)
\right)
\right).
\]
Thus the parent iterates \(\pi^{\ell}\) give the value-side form of the fixed-length Mersenne truncations. We use these iterates to define the front profile for the parameters \(K\) and \(\Omega\).

We now construct the integer-valued profile \(\sigma:\Z\to\Z\) directly from the Mersenne parent iterates. First,
\[
\sigma(\xi):=0\qquad(\xi<0).
\]

We call
\[
0\le\xi<(h+1)\Omega
\]
the front range. For each \(0\le\ell\le h\), consider the block
\[
(h-\ell)\Omega
\le\xi\le
(h-\ell)\Omega+K.
\]
Since \(\Omega=K+1\), these \(h+1\) blocks are consecutive and disjoint, and they cover the front range. Hence every integer in this range has a unique representation
\[
\xi=(h-\ell)\Omega+j,
\qquad
0\le \ell\le h,
\qquad
0\le j\le K.
\]
We define the integer profile on this range by
\begin{equation}\label{eq:front-profile}
\sigma(\xi):=\pi^{\ell}(K+j)
\quad\text{for }\xi=(h-\ell)\Omega+j,
\quad 0\le\ell\le h,
\quad 0\le j\le K.
\end{equation}
Equivalently,
\[
\sigma((h-\ell)\Omega+j)=C\left(\delta^{\ell}(C_{h+1}^{-1}(K+j))\right).
\]
\begin{lemma}[Endpoint parent values]\label{lem:endpoint-parent-values}
For \(0\le\ell\le h+1\),
\[
\pi^{\ell}(K)=2^{h+1-\ell}-1,
\qquad
\pi^{\ell}(2K)=2(2^{h+1-\ell}-1).
\]
\end{lemma}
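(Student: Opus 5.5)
The plan is to identify the canonical Mersenne words of \(K\) and \(2K\) and then read off the parent iterates from digit deletion via Proposition~\ref{prop:pi-deletion} (or Lemma~\ref{lem:fixed-pi}).

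First, since \(K=2^{h+1}-1\) is the weight of position \(h+1\), the word \(\mathtt{1}\mathtt{0}^{h}\in\LLang_{h+1}\) has value \(K\). By uniqueness in Proposition~\ref{prop:fixed-bijection}, we get \(C_{h+1}^{-1}(K)=\mathtt{1}\mathtt{0}^{h}\). Similarly, \(\mathtt{2}\mathtt{0}^{h}\) is admissible, since the digit \(\mathtt{2}\) is followed only by zeros, and its value is \(2(2^{h+1}-1)=2K\). Hence \(C_{h+1}^{-1}(2K)=\mathtt{2}\mathtt{0}^{h}\).

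Next, for \(0\le\ell\le h+1\), deleting the lowest \(\ell\) digits gives
\[
\delta^{\ell}(\mathtt{1}\mathtt{0}^{h})=\mathtt{1}\mathtt{0}^{h-\ell},
\qquad
\delta^{\ell}(\mathtt{2}\mathtt{0}^{h})=\mathtt{2}\mathtt{0}^{h-\ell}.
\]
When \(\ell=h+1\), both results are \(\eps\). These words have values \(2^{h+1-\ell}-1\) and \(2(2^{h+1-\ell}-1)\), respectively. At \(\ell=h+1\) both values are \(0\), which agrees with the formulas. Lemma~\ref{lem:fixed-pi} then gives \(\pi^{\ell}(K)\) and \(\pi^{\ell}(2K)\) as stated.

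An alternative check uses the trailing-zero shift. We have \(K=T_{h}(1)\) and \(2K=T_{h}(2)\), because \(Z(1)=1\) and \(Z(2)=2\). Lemma~\ref{lem:zero-shift} then gives \(\pi^{\ell}(T_{h}(r))=T_{h-\ell}(r)\) for \(\ell\le h\), which yields the same values. The last step \(\ell=h+1\) follows from \(\pi(1)=\pi(2)=0\).

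There is no real obstacle here. The only care needed is the endpoint \(\ell=h+1\), where the deleted word is \(\eps\), and checking that \(\mathtt{2}\mathtt{0}^{h}\) is admissible.
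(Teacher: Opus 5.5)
Your proof is correct and follows the paper's own argument: you identify \(C_{h+1}^{-1}(K)=\mathtt{1}\mathtt{0}^{h}\) and \(C_{h+1}^{-1}(2K)=\mathtt{2}\mathtt{0}^{h}\), delete the lower \(\ell\) digits, and read off the values through Lemma~\ref{lem:fixed-pi}, with \(\ell=h+1\) giving \(\eps\). Your alternative check is also valid: it uses \(K=T_{h}(1)\) and \(2K=T_{h}(2)\) together with Lemma~\ref{lem:zero-shift}, and handles the last step with \(\pi(1)=\pi(2)=0\).
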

\begin{proof}
The fixed-length representations are
\[
C_{h+1}^{-1}(K)=\mathtt{1}\mathtt{0}^{h},
\qquad
C_{h+1}^{-1}(2K)=\mathtt{2}\mathtt{0}^{h}.
\]
For \(0\le\ell\le h\), deleting the lower \(\ell\) digits leaves \(\mathtt{1}\mathtt{0}^{h-\ell}\) and \(\mathtt{2}\mathtt{0}^{h-\ell}\), respectively, which have the stated values. For \(\ell=h+1\), both words are deleted completely and both values are zero.
\end{proof}

\subsection{Reflected Closure}

We extend the front profile by reflection. For \((h+1)\Omega\le\xi\le d\), define
\[
\sigma(\xi):=R-\sigma(d-\xi).
\]
Here \(d-\xi\) belongs to the front range, so the right-hand side has already been defined. We call this extension the reflected closure of the front profile. Finally, define
\[
\sigma(\xi):=R\qquad(\xi>d).
\]
When \(h=0\), this exterior definition agrees with the front value \(\sigma(1)=2\).

\begin{lemma}[Front--reflection compatibility]\label{lem:front-reflection}
The front profile is compatible with reflection on the overlap of the front range with its image under \(\xi\mapsto d-\xi\). Consequently, after reflected closure,
\[
\sigma(\xi)+\sigma(d-\xi)=R
\qquad
(\xi\in\Z).
\]
\end{lemma}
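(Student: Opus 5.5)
The plan is to make the overlap explicit, check the identity there by direct computation with Lemma~\ref{lem:Z-sector}, and then extend it to all \(\xi\in\Z\) by cases on the definition of \(\sigma\).

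\textbf{Locating the overlap.} The front range is \([0,(h+1)\Omega-1]\), and its image under \(\xi\mapsto d-\xi\) is \([d-(h+1)\Omega+1,\,d]\). Since \(d=2h\Omega+\Lambda-1\), this image starts at \((h-1)\Omega+\Lambda\). For \(h\ge1\) the overlap is therefore
\[
(h-1)\Omega+\Lambda\le\xi\le(h+1)\Omega-1 .
\]
It consists of the whole block \(\ell=0\) together with the tail \(j\ge\Lambda\) of the block \(\ell=1\). The case \(h=0\) is checked directly: there \(d=0\), the overlap is \(\{0\}\), and \(\sigma(0)=\pi^{0}(K)=1=R/2\).

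\textbf{Checking the overlap.} Take \(\xi=h\Omega+j\) in block \(\ell=0\), so \(\sigma(\xi)=K+j\). Then \(d-\xi=h\Omega+(\Lambda-1-j)\). There are two cases.

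\begin{itemize}
\item If \(0\le j\le\Lambda-1\), the partner also lies in block \(0\). The required identity is \((K+j)+(K+\Lambda-1-j)=2K+\Lambda-1=R\), which holds by the definition of \(R\).
\item If \(\Lambda\le j\le K\), the partner is \((h-1)\Omega+j'\) with \(j'=K+\Lambda-j\in[\Lambda,K]\), so it lies in block \(\ell=1\) and has value \(\pi(K+j')=K+j'-Z(K+j')\). The sum of the two values is \(3K+\Lambda-Z(K+j')\). Hence the identity is equivalent to
\[
Z(K+j')=K+1=2^{h+1}\qquad(\Lambda\le j'\le K).
\]
\end{itemize}

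Every overlap point in block \(1\) is the partner of a block-\(0\) point, so these two cases cover the whole overlap.

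\textbf{Main step: \(Z(K+j')=2^{h+1}\).} I would apply Lemma~\ref{lem:Z-sector} with \(m=h+1\), so \(A_{h+1}=K\).

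\begin{itemize}
\item For \(j'\le K-1=2^{h+1}-2\), the lemma gives \(Z(K+j')=2^{h}+Z(j')\). It remains to show \(Z(j')=2^{h}\) for \(\Lambda\le j'\le K\). Every binary word of length \(h\) has value at most \(C(\mathtt{1}^{h})=\Lambda-1\), so all \(2^{h}\) of these words lie below \(j'\). By Proposition~\ref{prop:fixed-bijection}, the next binary-side value is \(C(\mathtt{1}\mathtt{0}^{h})=K\), which is not counted by \(Z(j')\) when \(j'\le K\). Hence \(Z(j')=2^{h}\).
\item For \(j'=K\), the endpoint statement \(Z(2A_{h+1})=2^{h+1}\) of the same lemma applies directly.
\end{itemize}

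I expect this step to be the only real obstacle. The rest is index bookkeeping with \(\Omega=K+1\) and \(\Lambda=K-h\).

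\textbf{Globalising.} I would then prove \(\sigma(\xi)+\sigma(d-\xi)=R\) for every \(\xi\in\Z\) by cases.

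\begin{itemize}
\item If \(\xi<0\), then \(d-\xi>d\), so the two values are \(0\) and \(R\).
\item If \(\xi>d\), the same holds with the roles exchanged.
\item If \(\xi\) lies in the reflected range \([(h+1)\Omega,d]\), the identity is the definition of the reflected closure.
\item If \(\xi\) lies in the front range and \(d-\xi\) lies in the reflected range, the identity follows from the definition applied at \(d-\xi\).
\item If both \(\xi\) and \(d-\xi\) lie in the front range, this is exactly the overlap case treated above.
\end{itemize}

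These cases exhaust \(\Z\), which proves the lemma.
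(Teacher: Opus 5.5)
Your proposal is correct and matches the paper's proof. Both use the same overlap $(h-1)\Omega+\Lambda\le\xi\le(h+1)\Omega-1$, the same top-row computation $(K+j)+(K+\Lambda-1-j)=R$, and the same key fact $Z(K+j)=K+1$ for $\Lambda\le j\le K$, obtained from the sector recursion with $Z(j)=2^{h}$ and the endpoint value $Z(2K)=K+1$. The differences are cosmetic: you index the cross-row pairs by their top-row member rather than their row-$1$ member, and you write out the exterior and reflected-range bookkeeping more explicitly.
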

\begin{proof}
For \(h=0\), one has \(d=0\), \(R=2\), and \(\sigma(0)=1\), and the identity follows.

Let \(h\ge1\). Since \(R=2K+\Lambda-1\) and \(d=2h\Omega+\Lambda-1\), the front range and its reflection overlap on
\[
(h-1)\Omega+\Lambda
\le\xi\le
(h+1)\Omega-1.
\]
It is enough to prove the identity on one member of each reflection pair.

First, let \(\xi=h\Omega+j\), where \(0\le j<\Lambda\). Then
\[
d-\xi=h\Omega+(\Lambda-1-j).
\]
Both points belong to the top front row, and hence
\[
\begin{aligned}
\sigma(\xi)+\sigma(d-\xi)
&=(K+j)+(K+\Lambda-1-j)\\
&=2K+\Lambda-1
=R.
\end{aligned}
\]

Next, let \(\xi=(h-1)\Omega+j\), where \(\Lambda\le j\le K\). Since \(\Lambda-1=C(\mathtt{1}^{h})\) is the largest binary-side value represented by a word of length \(h\), while \(K=C(\mathtt{1}\mathtt{0}^{h})\) is the next binary-side value, one has \(Z(j)=2^{h}\) for \(\Lambda\le j\le K\). The sector recursion for \(j<K\), together with \(Z(2K)=K+1\) at \(j=K\), gives
\[
Z(K+j)=K+1
\qquad
(\Lambda\le j\le K).
\]
Consequently, \(\pi(K+j)=j-1\). On the other hand,
\[
d-\xi=h\Omega+(K+\Lambda-j),
\]
and therefore
\[
\begin{aligned}
\sigma(\xi)+\sigma(d-\xi)
&=(j-1)+(2K+\Lambda-j)\\
&=2K+\Lambda-1
=R.
\end{aligned}
\]
The first range treats the pairs through their member in the top front row, while the second range treats the remaining pairs through their member in the row immediately below it. Thus these ranges contain at least one member of every reflection pair in the overlap. The identity on the remaining half follows by interchanging \(\xi\) and \(d-\xi\), and it holds on the reflected range by definition. In the exterior ranges, the paired values are \(0\) and \(R\). This proves the identity on \(\Z\).
\end{proof}

The reflected closure completes the construction of the global integer profile \(\sigma:\Z\to\Z\). For \(h\ge1\), its definition is summarized by
\[
\sigma(\xi)=
\begin{cases}
0,
& \xi<0,\\
\pi^{\ell}(K+j),
& \xi=(h-\ell)\Omega+j,\quad 0\le\ell\le h,\quad 0\le j\le K,\\
R-\sigma(d-\xi),
& (h+1)\Omega\le\xi\le d,\\
R,
& \xi>d.
\end{cases}
\]
In the reflected range, \(d-\xi\) belongs to the front range, so the third line uses only values already defined by the second line. For \(h=0\), the same construction gives
\[
\sigma(\xi)=
\begin{cases}
0,&\xi<0,\\
1,&\xi=0,\\
2,&\xi>0.
\end{cases}
\]
In both cases,
\[
\sigma(\xi)+\sigma(d-\xi)=R
\qquad
(\xi\in\Z),
\]
and \(\sigma\) is constant on both exterior half-lines. The next section studies the inner and outer differences of this global integer profile and proves the window-counting theorem.

\section{Window-counting Theorem}\label{sec:window-counting}
We study the inner and outer differences of the global integer profile \(\sigma\). Define
\[
D(\xi):=\sigma(\xi)-\sigma(\xi-1),
\qquad
W(\xi):=\sigma(\xi+K)-\sigma(\xi-\Omega).
\]
Since \(\Omega=K+1\), telescoping gives
\[
W(\xi)=\sum_{\eta=\xi-K}^{\xi+K}D(\eta).
\]
Thus \(W(\xi)\) is the sum of \(D\) over the centered interval \([\xi-K,\xi+K]\cap\Z\). Once \(D\) is shown to be \(\{0,1\}\)-valued, this identity shows that \(W(\xi)\) counts the jumps of \(\sigma\) in this interval.

We prove that
\[
D(\xi)\in\{0,1\}
\]
and
\[
D(\xi)=1
\quad\Longleftrightarrow\quad
W(\xi)\equiv2\pmod3
\]
for every \(\xi\in\Z\).

\subsection{Reflection Reduction}
We first consider \(h=0\). The explicit profile gives
\[
D(\xi)=
\begin{cases}
1,&\xi=0,1,\\
0,&\text{otherwise}.
\end{cases}
\]
Since \(K=1\), one has \(W(\xi)=D(\xi-1)+D(\xi)+D(\xi+1)\), and hence
\[
W(\xi)=
\begin{cases}
2,&\xi=0,1,\\
1,&\xi=-1,2,\\
0,&\text{otherwise}.
\end{cases}
\]
Therefore, the theorem holds for \(h=0\). In the remainder of this section, let \(h\ge1\).

\begin{lemma}[Reflection symmetry of \(D\) and \(W\)]\label{lem:window-reflection}
For every \(\xi\in\Z\),
\[
D(N-\xi)=D(\xi),
\qquad
W(N-\xi)=W(\xi).
\]
\end{lemma}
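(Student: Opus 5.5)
The plan is to derive both identities directly from the global reflection identity of Lemma~\ref{lem:front-reflection},
\[
\sigma(\xi)+\sigma(d-\xi)=R\qquad(\xi\in\Z).
\]
Since \(N=d+1\), the identity will be applied at the points \(N-\xi=d-\xi+1\) and \(N-\xi-1=d-\xi\). This gives
\[
\sigma(N-\xi)=R-\sigma(\xi-1),
\qquad
\sigma(N-\xi-1)=R-\sigma(\xi).
\]
Subtracting these two relations yields
\[
D(N-\xi)=\sigma(N-\xi)-\sigma(N-\xi-1)=\sigma(\xi)-\sigma(\xi-1)=D(\xi),
\]
which is the first assertion. This argument needs no case analysis, because Lemma~\ref{lem:front-reflection} holds on all of \(\Z\), including the exterior half-lines and the seam.

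For \(W\), the plan is to use the telescoped window representation
\[
W(\xi)=\sum_{\eta=\xi-K}^{\xi+K}D(\eta).
\]
Substituting \(\eta\mapsto N-\eta\) maps the window centred at \(N-\xi\) bijectively onto the window \([\xi-K,\xi+K]\), since the window is symmetric about its centre. Hence
\[
W(N-\xi)=\sum_{\eta=\xi-K}^{\xi+K}D(N-\eta)=\sum_{\eta=\xi-K}^{\xi+K}D(\eta)=W(\xi).
\]

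As a cross-check, the same result should follow directly from the definition of \(W\). Applying the reflection identity at the points \(N-\xi+K\) and \(N-\xi-\Omega\), together with \(\Omega=K+1\), gives \(\sigma(N-\xi+K)=R-\sigma(\xi-\Omega)\) and \(\sigma(N-\xi-\Omega)=R-\sigma(\xi+K)\), whose difference is again \(W(\xi)\).

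I expect no real obstacle here. The only point requiring care is the index bookkeeping: \(N=d+1\) rather than \(d\), and \(\Omega-K=1\), so the reflection must be centred correctly to map \(D\) and \(W\) onto themselves.
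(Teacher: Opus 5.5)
Your proof is correct and essentially the paper's. The identity for $D$ is obtained exactly as in the paper from $\sigma(\xi)+\sigma(d-\xi)=R$ with $N=d+1$, and your ``cross-check'' for $W$ (using $N-\xi+K=d-(\xi-\Omega)$ and $N-\xi-\Omega=d-(\xi+K)$) is precisely the paper's argument, while the window-sum route you lead with is an equally valid minor variant.
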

\begin{proof}
By Lemma~\ref{lem:front-reflection} and \(N=d+1\),
\[
\begin{aligned}
D(N-\xi)
&=\sigma(d+1-\xi)-\sigma(d-\xi)\\
&=\bigl(R-\sigma(\xi-1)\bigr)-\bigl(R-\sigma(\xi)\bigr)\\
&=D(\xi).
\end{aligned}
\]
Since \(\Omega=K+1\), one has \(N-\xi+K=d-(\xi-\Omega)\) and \(N-\xi-\Omega=d-(\xi+K)\). Therefore,
\[
\begin{aligned}
W(N-\xi)
&=\sigma(d-(\xi-\Omega))-\sigma(d-(\xi+K))\\
&=\bigl(R-\sigma(\xi-\Omega)\bigr)-\bigl(R-\sigma(\xi+K)\bigr)\\
&=W(\xi).
\end{aligned}
\]
\end{proof}

Since \(N=2h\Omega+\Lambda\),
\[
\left\lfloor\frac{N}{2}\right\rfloor
=
h\Omega+\left\lfloor\frac{\Lambda}{2}\right\rfloor.
\]
It is therefore enough to consider \(\xi\le\lfloor N/2\rfloor\). The fundamental side is the disjoint union of the left exterior \(\xi<0\), the core \(0\le\xi<h\Omega\), and the reflection seam
\[
h\Omega\le\xi\le h\Omega+\left\lfloor\frac{\Lambda}{2}\right\rfloor.
\]
The core consists of the endpoints \(\xi=(h-\ell)\Omega\), where \(1\le\ell\le h\), and the regular-core points \(\xi=(h-\ell)\Omega+j\), where \(1\le\ell\le h\) and \(1\le j\le K\).

\subsection{The Left Exterior}
Let \(\xi<0\). Then \(D(\xi)=0\). Since \(\xi-\Omega<0\), one has \(W(\xi)=\sigma(\xi+K)\). If \(\xi<-K\), then \(W(\xi)=0\). Suppose that \(-K\le\xi<0\), and let \(j=\xi+K\). Then \(0\le j\le K-1\) and
\[
W(\xi)=\sigma(j)=\pi^{h}(K+j).
\]
Since \(K\le K+j\le2K-1\), the highest digit of the fixed-length Mersenne word representing \(K+j\) is \(\mathtt{1}\). Deleting the lower \(h\) digits gives \(\pi^{h}(K+j)=1\). Hence \(D(\xi)=0\) and \(W(\xi)\in\{0,1\}\) throughout the left exterior.

\subsection{Core Analysis}
We next consider the core range \(0\le\xi<h\Omega\). We treat the regular-core points first and then the core endpoints.

\begin{proposition}[Regular-core formula]\label{prop:regular-core-window}
Let \(\xi=(h-\ell)\Omega+j\), where \(1\le\ell\le h\) and \(1\le j\le K\), and let \(\mu=K+j\). Then
\[
D(\xi)=
\begin{cases}
1,&\rho_{\ell}(\mu)=0,\\
0,&\rho_{\ell}(\mu)>0,
\end{cases}
\]
and
\[
W(\xi)
=
3Z(\pi^{\ell}(\mu))
+
\left\lfloor\frac{\rho_{\ell}(\mu)}{2^{\ell}}\right\rfloor
-
D(\xi).
\]
In particular,
\[
D(\xi)=1
\quad\Longleftrightarrow\quad
W(\xi)\equiv2\pmod3.
\]
\end{proposition}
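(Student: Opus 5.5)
The plan is to express both \(D(\xi)\) and \(W(\xi)\) through parent iterates of the two adjacent arguments \(\mu-1\) and \(\mu\). Then Proposition~\ref{prop:quotient-remainder}, Corollary~\ref{cor:pi-row-difference} and Theorem~\ref{thm:diagonal-defect} should give everything directly.

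\emph{Step 1: locate the three relevant points in the front profile.} With \(\xi=(h-\ell)\Omega+j\), \(1\le\ell\le h\), \(1\le j\le K\), the point \(\xi-1=(h-\ell)\Omega+(j-1)\) lies in the same block. Hence \(\sigma(\xi-1)=\pi^{\ell}(\mu-1)\).

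Since \(\Omega=K+1\), we have \(\xi+K=(h-\ell+1)\Omega+(j-1)\), with \(0\le j-1\le K-1\) and row index \(\ell-1\in\{0,\dots,h-1\}\). This point is still in the front range, not the reflected range, so \(\sigma(\xi+K)=\pi^{\ell-1}(\mu-1)\).

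Finally, \(\xi-\Omega=(h-\ell-1)\Omega+j\). If \(\ell\le h-1\), this gives \(\sigma(\xi-\Omega)=\pi^{\ell+1}(\mu)\). If \(\ell=h\), then \(\xi-\Omega<0\) and \(\sigma(\xi-\Omega)=0\). This agrees with \(\pi^{h+1}(\mu)=0\) by Lemma~\ref{lem:fixed-pi}, because \(\mu\le 2K\) has a fixed-length word of length \(h+1\). In all cases,
\[
D(\xi)=\pi^{\ell}(\mu)-\pi^{\ell}(\mu-1),\qquad W(\xi)=\pi^{\ell-1}(\mu-1)-\pi^{\ell+1}(\mu).
\]

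\emph{Step 2: the formula for \(D\).} Since \(\mu=K+j\ge1\), the last-but-one assertion of Proposition~\ref{prop:quotient-remainder} gives \(D(\xi)=1\) if \(\rho_{\ell}(\mu)=0\) and \(D(\xi)=0\) otherwise.

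\emph{Step 3: the formula for \(W\).} We telescope
\[
W(\xi)=\bigl[\pi^{\ell-1}(\mu-1)-\pi^{\ell}(\mu-1)\bigr]+\bigl[\pi^{\ell}(\mu-1)-\pi^{\ell}(\mu)\bigr]+\bigl[\pi^{\ell}(\mu)-\pi^{\ell+1}(\mu)\bigr].
\]
By Corollary~\ref{cor:pi-row-difference}, this equals
\[
Z\bigl(\pi^{\ell-1}(\mu-1)\bigr)-D(\xi)+Z\bigl(\pi^{\ell}(\mu)\bigr).
\]
The diagonal tau defect formula, Theorem~\ref{thm:diagonal-defect}, applies since \(\ell\ge1\) and \(\mu\ge1\). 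It replaces the first term by \(2Z(\pi^{\ell}(\mu))+\lfloor\rho_{\ell}(\mu)/2^{\ell}\rfloor\), which yields the stated formula for \(W\).

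\emph{Step 4: the residue statement.} If \(D(\xi)=1\), then \(\rho_{\ell}(\mu)=0\), so \(W\equiv-1\equiv2\pmod 3\). If \(D(\xi)=0\), then \(\rho_{\ell}(\mu)>0\), and by Proposition~\ref{prop:quotient-remainder} we have \(\pi^{\ell}(\mu)\in C(\overline J)\) and \(\rho_{\ell}(\mu)\le2^{\ell+1}-2\). Thus \(\lfloor\rho_{\ell}(\mu)/2^{\ell}\rfloor\in\{0,1\}\), so \(W\equiv0\) or \(1\pmod 3\), never \(2\).

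The only delicate point is Step 1: checking that \(\xi+K\) stays in the front range (row \(\ell-1\ge0\), never the reflected range) and handling the boundary case \(\ell=h\) for \(\xi-\Omega\). Once these index placements are verified, the rest is formal.
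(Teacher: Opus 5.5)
Your proposal is correct and follows the paper's proof essentially step for step: you identify \(\sigma(\xi-1)\), \(\sigma(\xi+K)\), \(\sigma(\xi-\Omega)\) as parent iterates of \(\mu-1\) and \(\mu\) (including the \(\ell=h\) boundary case), use the block-beginning criterion for \(D\), combine the row-difference identity with the diagonal tau defect for \(W\), and give the same residue argument. Your explicit check that \(\xi+K\) stays in the front range, and your appeal to Lemma~\ref{lem:fixed-pi} for \(\pi^{h+1}(\mu)=0\), only spell out what the paper states briefly.
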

\begin{proof}
By the front formula,
\[
\sigma(\xi)=\pi^{\ell}(\mu),
\qquad
\sigma(\xi-1)=\pi^{\ell}(\mu-1).
\]
Hence
\[
D(\xi)=\pi^{\ell}(\mu)-\pi^{\ell}(\mu-1).
\]
The block-beginning criterion gives the stated formula for \(D(\xi)\).

Moreover, \(\xi+K=(h-(\ell-1))\Omega+(j-1)\), while \(\xi-\Omega=(h-(\ell+1))\Omega+j\). Thus
\[
W(\xi)=\pi^{\ell-1}(\mu-1)-\pi^{\ell+1}(\mu).
\]
For \(\ell=h\), both \(\sigma(\xi-\Omega)\) and \(\pi^{h+1}(\mu)\) are zero. Adding \(D(\xi)\) and using the parent-difference identity, we obtain
\[
\begin{aligned}
W(\xi)+D(\xi)
&=Z(\pi^{\ell-1}(\mu-1))+Z(\pi^{\ell}(\mu))\\
&=3Z(\pi^{\ell}(\mu))
+\left\lfloor\frac{\rho_{\ell}(\mu)}{2^{\ell}}\right\rfloor,
\end{aligned}
\]
where the second equality follows from Theorem~\ref{thm:diagonal-defect}. This proves the formula for \(W(\xi)\).

If \(\rho_{\ell}(\mu)=0\), then \(D(\xi)=1\) and \(W(\xi)\equiv-1\equiv2\pmod3\). If \(\rho_{\ell}(\mu)>0\), then \(\pi^{\ell}(\mu)\) belongs to the binary side, because a truncation block over a nonbinary-side value has length one. Hence the corresponding truncation block is full, and \(1\le\rho_{\ell}(\mu)\le2^{\ell+1}-2\). Hence the floor term is \(0\) or \(1\). In this case \(D(\xi)=0\), so \(W(\xi)\not\equiv2\pmod3\). The equivalence follows.
\end{proof}

\begin{lemma}[Core endpoints]\label{lem:core-endpoints}
Let \(\xi=(h-\ell)\Omega\), where \(1\le\ell\le h\). Then
\[
D(\xi)=1,
\qquad
W(\xi)=3\mathbin{\cdot}2^{h-\ell}-1.
\]
In particular, \(W(\xi)\equiv2\pmod3\).
\end{lemma}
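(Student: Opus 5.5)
The plan is to evaluate the four profile values $\sigma(\xi)$, $\sigma(\xi-1)$, $\sigma(\xi+K)$ and $\sigma(\xi-\Omega)$ directly from the front formula \eqref{eq:front-profile}. Each of them lies either on an edge $j=0$ or $j=K$ of a front block, or in the left exterior. So every value is an endpoint parent value, and Lemma~\ref{lem:endpoint-parent-values} gives it in closed form. No truncation-remainder or diagonal-defect argument should be needed, since the endpoints are exactly where the regular-core analysis of Proposition~\ref{prop:regular-core-window} does not apply.

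\emph{Step 1 (locating the four points).} Write $\xi=(h-\ell)\Omega$ with $1\le\ell\le h$, i.e.\ $j=0$ in row $\ell$.
\begin{itemize}
\item $\xi$ itself gives $\sigma(\xi)=\pi^{\ell}(K)=2^{h+1-\ell}-1$.
\item $\xi+K=(h-\ell)\Omega+K$ lies in the same row with $j=K$. It stays in the front range, so $\sigma(\xi+K)=\pi^{\ell}(2K)=2(2^{h+1-\ell}-1)$.
\item $\xi-1=(h-\ell-1)\Omega+K$ lies in row $\ell+1$ with $j=K$.
\item $\xi-\Omega=(h-\ell-1)\Omega$ lies in row $\ell+1$ with $j=0$.
\end{itemize}

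\emph{Step 2 (the case $\ell+1\le h$).} Here $\sigma(\xi-1)=\pi^{\ell+1}(2K)=2(2^{h-\ell}-1)$ and $\sigma(\xi-\Omega)=\pi^{\ell+1}(K)=2^{h-\ell}-1$.

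\emph{Step 3 (the case $\ell=h$).} Now $\xi=0$, so $\xi-1$ and $\xi-\Omega$ are negative and $\sigma$ vanishes there. The endpoint formulas with exponent $h+1$ also give $\pi^{h+1}(2K)=\pi^{h+1}(K)=0$. Hence the formulas from Step~2 remain valid uniformly for $1\le\ell\le h$.

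\emph{Step 4 (computing $D$ and $W$).} Subtracting gives
\[
D(\xi)=(2^{h+1-\ell}-1)-(2^{h+1-\ell}-2)=1,
\]
and
\[
W(\xi)=2^{h+2-\ell}-2-(2^{h-\ell}-1)=3\cdot2^{h-\ell}-1\equiv2\pmod 3.
\]

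The only point needing care is the bookkeeping of which row and offset each shifted point falls into, including the boundary row $\ell=h$. I expect no real obstacle beyond checking that $\xi+K$ does not cross into the reflected range, which holds because $(h-\ell)\Omega+K<(h+1)\Omega$.
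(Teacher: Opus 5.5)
Your proposal is correct and matches the paper's proof: it uses the same four values $\sigma(\xi)=\pi^{\ell}(K)$, $\sigma(\xi-1)=\pi^{\ell+1}(2K)$, $\sigma(\xi+K)=\pi^{\ell}(2K)$, $\sigma(\xi-\Omega)=\pi^{\ell+1}(K)$ and reads them off from Lemma~\ref{lem:endpoint-parent-values}. Your separate check of the row $\ell=h$ (negative arguments against the $\ell=h+1$ clause of that lemma) and of $\xi+K$ staying in the front range makes explicit what the paper leaves implicit.
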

\begin{proof}
The four relevant profile values are
\[
\begin{aligned}
\sigma(\xi)&=\pi^{\ell}(K),&
\sigma(\xi-1)&=\pi^{\ell+1}(2K),\\
\sigma(\xi+K)&=\pi^{\ell}(2K),&
\sigma(\xi-\Omega)&=\pi^{\ell+1}(K).
\end{aligned}
\]
The result follows from Lemma~\ref{lem:endpoint-parent-values}.
\end{proof}

\subsection{Reflection Seam}
It remains to consider \(\xi=h\Omega+j\), where \(0\le j\le\lfloor\Lambda/2\rfloor\).

\begin{lemma}[Reflection-seam formula]\label{lem:seam-self-detection}
For every point in the reflection seam,
\[
D(\xi)=1,
\qquad
W(\xi)=3\mathbin{\cdot}2^{h}-1.
\]
In particular, \(W(\xi)\equiv2\pmod3\).
\end{lemma}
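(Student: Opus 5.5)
The plan is to compute the four relevant profile values directly, using the top front row (\(\ell=0\)), the row \(\ell=1\) of the front, and the reflected closure. We treat \(j=0\) separately from \(1\le j\le\lfloor\Lambda/2\rfloor\), because for \(j=0\) the point \(\xi+K\) still lies in the front range, whereas for \(j\ge1\) it lies in the reflected range.

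\emph{The inner difference.} For \(1\le j\le\lfloor\Lambda/2\rfloor\), both \(\xi\) and \(\xi-1\) lie in the top front row. Hence \(\sigma(\xi)=K+j\) and \(\sigma(\xi-1)=K+j-1\), so \(D(\xi)=1\). For \(j=0\), we have \(\sigma(h\Omega)=\pi^{0}(K)=K\). The point \(h\Omega-1=(h-1)\Omega+K\) lies in the row \(\ell=1\), so \(\sigma(h\Omega-1)=\pi(2K)=2(2^{h}-1)=K-1\) by Lemma~\ref{lem:endpoint-parent-values}. Again \(D(\xi)=1\).

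\emph{The outer difference.} In all cases \(\xi-\Omega=(h-1)\Omega+j\), so \(\sigma(\xi-\Omega)=\pi(K+j)\).

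For \(j=0\), the point \(\xi+K=h\Omega+K\) is in the top row, so \(\sigma(\xi+K)=2K\). Also \(\pi(K)=2^{h}-1\) by Lemma~\ref{lem:endpoint-parent-values}. Therefore
\[
W=2K-2^{h}+1=3\cdot2^{h}-1.
\]

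For \(j\ge1\), the point \(\xi+K\ge(h+1)\Omega\) lies in the reflected range, and \(\xi+K\le d\) because \(j\le\Lambda/2\). Since \(d-(\xi+K)=(h-1)\Omega+(\Lambda-j)\), we get
\[
\sigma(\xi+K)=R-\pi(K+\Lambda-j).
\]
Hence
\[
W(\xi)=R-\pi(K+j)-\pi(K+\Lambda-j).
\]

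\emph{Reduction to complement symmetry.} Both \(j\) and \(\Lambda-j\) lie in \([0,K-1]=[0,2^{h+1}-2]\). Lemma~\ref{lem:Z-sector} with \(m=h+1\) and \(A_{h+1}=K\) gives
\[
\pi(K+s)=K+s-2^{h}-Z(s)\qquad\text{for }s\in\{j,\Lambda-j\}.
\]
Adding the two instances and applying Corollary~\ref{cor:seam-complementarity}, namely \(Z(j)+Z(\Lambda-j)=2^{h}\), yields
\[
\pi(K+j)+\pi(K+\Lambda-j)=2K+\Lambda-3\cdot2^{h}.
\]
Since \(R=2K+\Lambda-1\), it follows that \(W(\xi)=3\cdot2^{h}-1\), which is \(\equiv2\pmod3\).

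\emph{Main obstacle.} The only delicate step is the bookkeeping of positions. One must verify that \(\xi+K\) crosses into the reflected range exactly when \(j\ge1\), and that the reflected argument \(d-(\xi+K)\) lands in row \(\ell=1\) with offset \(\Lambda-j\in[0,K]\). This uses \(d=2h\Omega+\Lambda-1\) and \(\Omega=K+1\). Once these positions are fixed, the complement symmetry cancels the two \(Z\)-terms exactly.
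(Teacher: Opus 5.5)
Your proof is correct and follows essentially the same route as the paper. You get \(D\) from the top front row, using \(\pi(2K)=K-1\) at \(j=0\), and then \(W\) from the reflected value \(R-\pi(K+\Lambda-j)\), the sector recursion, and Corollary~\ref{cor:seam-complementarity}; the only difference is that you compute \(W\) at \(j=0\) separately via Lemma~\ref{lem:endpoint-parent-values}, whereas the paper uses the global identity \(\sigma(\xi)+\sigma(d-\xi)=R\) of Lemma~\ref{lem:front-reflection}, which already covers \(\xi+K=(h+1)\Omega-1\) and so treats all seam points uniformly.
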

\begin{proof}
For \(j\ge1\), the top front row gives \(D(\xi)=1\). For \(j=0\), one has \(\sigma(h\Omega)=K\) and \(\sigma(h\Omega-1)=\pi(2K)=K-1\), so the same equality holds.

Furthermore, \(\sigma(\xi-\Omega)=\pi(K+j)\). Since
\[
d-(\xi+K)=(h-1)\Omega+\Lambda-j,
\]
the reflection formula gives \(\sigma(\xi+K)=R-\pi(K+\Lambda-j)\). Using \(R=2K+\Lambda-1\) and \(\pi(r)=r-Z(r)\), we obtain
\[
W(\xi)=Z(K+j)+Z(K+\Lambda-j)-1.
\]
Since \(0\le j\le\Lambda-j\le\Lambda\le K-1\), the sector recursion applies to both terms. Corollary~\ref{cor:seam-complementarity} therefore gives
\[
\begin{aligned}
W(\xi)
&=2^{h+1}+Z(j)+Z(\Lambda-j)-1\\
&=3\mathbin{\cdot}2^{h}-1.
\end{aligned}
\]
\end{proof}

\subsection{Global Theorem}
\begin{theorem}[Finite-depth window-counting theorem]\label{thm:window-counting}
For every \(h\ge0\) and every \(\xi\in\Z\),
\[
D(\xi)\in\{0,1\},
\]
and
\[
D(\xi)=1
\quad\Longleftrightarrow\quad
W(\xi)\equiv2\pmod3.
\]
\end{theorem}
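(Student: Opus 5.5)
The plan is to assemble the global theorem from the case analysis already carried out in this section, using the reflection symmetry to reduce to the fundamental side. For \(h=0\), the explicit computation at the start of the section already lists \(D\) and \(W\) on all of \(\Z\). There \(D(\xi)=1\) exactly for \(\xi=0,1\), which are exactly the points with \(W(\xi)=2\); all other values of \(W\) are \(0\) or \(1\). So the theorem holds for \(h=0\), and we may assume \(h\ge1\).

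For \(h\ge1\), I would first invoke Lemma~\ref{lem:window-reflection}. Since \(D(N-\xi)=D(\xi)\) and \(W(N-\xi)=W(\xi)\), both assertions at \(\xi\) are equivalent to the same assertions at \(N-\xi\). It suffices to treat \(\xi\le\lfloor N/2\rfloor\), because every integer \(\xi\) satisfies either \(\xi\le\lfloor N/2\rfloor\) or \(N-\xi\le\lfloor N/2\rfloor\). By the identity \(\lfloor N/2\rfloor=h\Omega+\lfloor\Lambda/2\rfloor\), this fundamental side is the disjoint union of four pieces:
\begin{itemize}
\item[(i)] the left exterior \(\xi<0\);
\item[(ii)] the core endpoints \(\xi=(h-\ell)\Omega\) with \(1\le\ell\le h\);
\item[(iii)] the regular-core points \(\xi=(h-\ell)\Omega+j\) with \(1\le\ell\le h\) and \(1\le j\le K\);
\item[(iv)] the reflection seam \(h\Omega\le\xi\le h\Omega+\lfloor\Lambda/2\rfloor\).
\end{itemize}
I would check once that (ii) and (iii) together exhaust \(0\le\xi<h\Omega\). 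This follows from the unique block representation \(\xi=(h-\ell)\Omega+j\) with \(0\le j\le K\), where \(\ell\ge1\) is forced by \(\xi<h\Omega\).

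Next I would verify the two claims on each piece.
\begin{itemize}
\item[(i)] The left-exterior computation gives \(D(\xi)=0\) and \(W(\xi)\in\{0,1\}\), so \(W(\xi)\not\equiv2\pmod3\) and the equivalence holds with both sides false.
\item[(ii)] Lemma~\ref{lem:core-endpoints} gives \(D=1\) and \(W\equiv2\pmod 3\).
\item[(iii)] Proposition~\ref{prop:regular-core-window} shows that \(D\in\{0,1\}\), according to whether \(\rho_\ell(\mu)=0\), and gives the equivalence directly.
\item[(iv)] Lemma~\ref{lem:seam-self-detection} gives \(D=1\) and \(W=3\cdot2^h-1\equiv2\pmod3\).
\end{itemize}
Combining these four cases with the reflection reduction yields the theorem for every \(\xi\in\Z\).

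The main point needing care is that the pieces really cover the fundamental side and that the reflection reduction is sound. In particular, the seam must extend up to \(\lfloor N/2\rfloor\), and the seam lemma's hypothesis \(j\le\lfloor\Lambda/2\rfloor\) must match exactly. When \(N\) is even, the midpoint \(N/2\) is self-paired and lies in the seam, so it is covered; points beyond \(\lfloor N/2\rfloor\) are handled by symmetry. The substantive work is already contained in the cited lemmas. Of these, the regular-core case, which rests on the diagonal tau defect formula, is the deepest ingredient, but it is used here as a black box.
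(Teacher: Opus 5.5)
Your proposal is correct and follows the paper's own proof. The paper likewise settles \(h=0\) by explicit computation, and for \(h\ge1\) it applies the cited left-exterior, core-endpoint, regular-core and seam results on the fundamental side \(\xi\le\lfloor N/2\rfloor\), then extends to all of \(\Z\) by Lemma~\ref{lem:window-reflection}; your explicit coverage checks only spell out what the paper leaves implicit.
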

\begin{proof}
The result for \(h=0\) follows from the explicit calculation above. Let \(h\ge1\). The required statements hold on the fundamental side by the preceding results. Lemma~\ref{lem:window-reflection} extends them to every \(\xi\in\Z\).
\end{proof}

\begin{corollary}[Jump-counting interpretation]\label{cor:jump-counting}
Define
\[
A:=\{\xi\in\Z:D(\xi)=1\}.
\]
Then
\[
\sigma(\xi)=\#\{\eta\in A:\eta\le\xi\},
\qquad
\#A=R,
\]
and
\[
W(\xi)=\#\{\eta\in A:\xi-K\le\eta\le\xi+K\}.
\]
\end{corollary}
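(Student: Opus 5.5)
The plan is to read everything off Theorem~\ref{thm:window-counting}, together with the constancy of \(\sigma\) on the two exterior half-lines: \(\sigma(\xi)=0\) for \(\xi<0\) and \(\sigma(\xi)=R\) for \(\xi>d\).

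First, since \(D(\xi)\in\{0,1\}\) for every \(\xi\), the function \(D\) is the indicator of \(A\). We have \(D(\xi)=0\) whenever \(\xi<0\) or \(\xi>d\), because \(\sigma\) is constant on both exterior half-lines. Hence \(A\subseteq[0,d]\) is finite.

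Second, fix \(\xi\) and pick any \(\xi_{0}<\min(0,\xi)\). Telescoping gives
\[
\sigma(\xi)=\sigma(\xi_{0})+\sum_{\eta=\xi_{0}+1}^{\xi}D(\eta).
\]
Since \(\sigma(\xi_{0})=0\), this sum equals \(\#\{\eta\in A:\xi_{0}<\eta\le\xi\}\), which is \(\#\{\eta\in A:\eta\le\xi\}\) because \(A\) has no negative elements. For \(\xi\le0\) the sum may be empty, and the formula still holds since \(\sigma(\xi)=0\) for \(\xi<0\). Taking \(\xi>d\) then gives \(\#A=\sigma(\xi)=R\).

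Third, I will use the telescoping identity \(W(\xi)=\sum_{\eta=\xi-K}^{\xi+K}D(\eta)\) from the start of Section~\ref{sec:window-counting}, which relies on \(\Omega=K+1\). Because \(D\) is the indicator of \(A\), this sum counts the elements of \(A\) in \([\xi-K,\xi+K]\).

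The only point needing care is that \(D\) is \(\{0,1\}\)-valued. This is exactly the first part of Theorem~\ref{thm:window-counting}, so I expect no real obstacle.
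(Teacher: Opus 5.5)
Your argument is the paper's argument: Theorem~\ref{thm:window-counting} makes \(D\) the indicator of \(A\). Telescoping from the left exterior gives \(\sigma(\xi)=\#\{\eta\in A:\eta\le\xi\}\), evaluating on the right exterior gives \(\#A=R\), and the identity \(W(\xi)=\sum_{\eta=\xi-K}^{\xi+K}D(\eta)\) gives the window count.

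One intermediate claim is off by one. It is not true that \(D(\xi)=0\) for every \(\xi>d\). By Lemma~\ref{lem:front-reflection}, \(\sigma(d)=R-\sigma(0)\). Also \(\sigma(0)=1\): for \(h\ge1\) one has \(\sigma(0)=\pi^{h}(K)=1\), and for \(h=0\) this is direct. Hence \(D(d+1)=R-(R-1)=1\). Equivalently, \(D(N)=D(0)=1\) by Lemma~\ref{lem:window-reflection}.

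Constancy of \(\sigma\) on \(\xi>d\) only gives \(D(\xi)=0\) when both \(\xi\) and \(\xi-1\) exceed \(d\), that is, for \(\xi\ge d+2\). So the correct inclusion is \(A\subseteq[0,N]\), not \(A\subseteq[0,d]\).

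This slip does not affect your conclusions. Every \(\xi>d\) satisfies \(\xi\ge N\), so \(\{\eta\in A:\eta\le\xi\}\) is still all of \(A\), and \(\#A=\sigma(\xi)=R\) as you state.
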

\begin{proof}
By the theorem, \(D(\xi)\) is \(1\) exactly when \(\xi\in A\), and is \(0\) otherwise. Since \(\sigma\) is zero on the left exterior, telescoping gives the first formula. On the right exterior, \(\sigma(\xi)=R\), so \(\#A=R\). The formula for \(W\) follows from the telescoping identity at the beginning of this section.
\end{proof}

\section{Finite-field Traveling-wave Solution}\label{sec:traveling-wave-solution}
\subsection{Finite-field Traveling-wave Profile}
We reduce the global integer profile modulo \(3\) and apply the window-counting theorem.

\begin{corollary}[Finite-field traveling-wave profile]\label{cor:traveling-wave}
Let \(h\ge0\), and let \(\sigma\) be the global integer profile. Define the \(\F_{3}\)-valued profile \(\widehat S\) by
\[
\widehat S(\xi)\equiv\sigma(\xi)\pmod3.
\]
Then \(\widehat S\) satisfies \eqref{eq:wave-S}. Define
\[
\widehat U(\xi):=\widehat S(\xi)-\widehat S(\xi-K),
\qquad
\widehat U_{n}^{t}:=\widehat U(Kn-\Omega t).
\]
Then \(\widehat U_{n}^{t}\) gives the corresponding finite-field BBS solution. Moreover,
\[
\operatorname{supp}\widehat U\subseteq[0,d+K]\cap\Z,
\]
and the velocity is
\[
\frac{\Omega}{K}=\frac{2^{h+1}}{2^{h+1}-1}.
\]
\end{corollary}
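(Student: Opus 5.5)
The plan is to reduce \eqref{eq:wave-S} to Theorem~\ref{thm:window-counting} through the detection property of the row \(a=0\) of the \(M\)-table. Fix \(\xi\in\Z\). Reduction modulo \(3\) is a ring homomorphism, so the left-hand side of \eqref{eq:wave-S} is the residue of \(D(\xi)=\sigma(\xi)-\sigma(\xi-1)\). Similarly, the argument \(\widehat S(\xi+K)-\widehat S(\xi-\Omega)\) is the residue of \(W(\xi)\). Hence \eqref{eq:wave-S} is equivalent to the single congruence
\[
D(\xi)\equiv M\bigl(0,\,W(\xi)-1\bigr)\pmod 3 .
\]

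We compute the right-hand side from the table. The row \(a=0\) gives \(M(0,0)=0\), \(M(0,1)=1\), \(M(0,2)=0\). Therefore \(M(0,b-1)=1\) if \(b\equiv2\) and \(M(0,b-1)=0\) otherwise, as already noted after \eqref{eq:S-plaquette}. So the right-hand side equals the indicator of \(W(\xi)\equiv2\pmod3\), read in \(\F_3\). Theorem~\ref{thm:window-counting} supplies both facts we need: \(D(\xi)\in\{0,1\}\), so its residue is the integer \(D(\xi)\) itself, and \(D(\xi)=1\) if and only if \(W(\xi)\equiv2\pmod 3\). The two sides are therefore the same \(\{0,1\}\)-valued indicator, which proves \eqref{eq:wave-S}.

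Next we pass to the BBS solution. We set \(\widehat S_n^t:=\widehat S(Kn-\Omega t)\). The coordinate computation preceding \eqref{eq:wave-S} shows that \eqref{eq:S-plaquette} holds at every \((n,t)\), and \eqref{eq:S-plaquette} is \eqref{eq:Yura-S-variable} because \(M(0,-1)=0\). The variable \(\widehat U_n^t=\widehat S_n^t-\widehat S_{n-1}^t=\widehat U(Kn-\Omega t)\) is then given by \eqref{eq:U-from-S}, matching the definition in the statement. The point needing care is the step from the \(S\)-equation back to \eqref{eq:ffBBS-U}. I would use the equivalence recorded in \cite{Yura2014}, checking only the boundary normalization it requires. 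Since \(\sigma\) is \(0\) for \(\xi<0\), we get \(\widehat S_n^t=0\) for \(n\) sufficiently negative. This makes \(\sum_{i\le n-1}(\widehat U_i^t-\widehat U_i^{t+1})\) telescope to \(\widehat S_{n-1}^t-\widehat S_{n-1}^{t+1}\). Substituting this into \eqref{eq:ffBBS-U} and comparing with \eqref{eq:Yura-S-variable} at the shifted vertex recovers the \(U\)-equation. This bookkeeping is the only mild obstacle; everything else is formal.

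Finally, the support and the velocity. The profile \(\sigma\) is constant (\(0\), respectively \(R\)) on \(\xi<0\) and on \(\xi>d\), so \(\widehat S\) is constant there too. Hence \(\widehat U(\xi)=\widehat S(\xi)-\widehat S(\xi-K)\) vanishes when \(\xi<0\), since both arguments are negative, and when \(\xi-K>d\), since both arguments exceed \(d\). This gives \(\operatorname{supp}\widehat U\subseteq[0,d+K]\cap\Z\). The traveling-wave form \(Kn-\Omega t\) means the profile moves \(\Omega/K\) sites per unit time, and \(\Omega/K=2^{h+1}/(2^{h+1}-1)\).
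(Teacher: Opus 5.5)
Your derivation of \eqref{eq:wave-S}, of the support inclusion and of the velocity is the paper's argument. You reduce Theorem~\ref{thm:window-counting} modulo $3$ using that $M(0,b-1)$ is the indicator of $b\equiv2$, and you use that $\sigma$ is constant on both exterior half-lines. For the passage to the $U$-equation the paper only invokes the correspondence of \cite{Yura2014}. Your telescoping of the sum to $\widehat S_{n-1}^{t}-\widehat S_{n-1}^{t+1}$, using $\widehat S=0$ far to the left, is a correct and useful addition to that.

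The one step that would fail is your claim that substituting this into \eqref{eq:ffBBS-U} and comparing with \eqref{eq:Yura-S-variable} at the shifted vertex ``recovers the $U$-equation'' as routine bookkeeping.

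\begin{itemize}
\item The comparison needs $M(a,b)=M(b,a)$ and $M(a+c,b+c)=M(a,b)+c$, both valid over $\F_3$. After using them, \eqref{eq:ffBBS-U} as displayed becomes $\widehat S_{n-1}^{t}-\widehat S_{n}^{t+1}=-M(0,1-b)$, with $b=\widehat S_{n}^{t}-\widehat S_{n-1}^{t+1}$.
\item By contrast, \eqref{eq:Yura-S-variable} at $(n-1,t)$ gives $M(0,b-1)$. These two values agree only when $b=1$ in $\F_3$.
\item Concretely, take a vacuum site far to the left. There the displayed right-hand side of \eqref{eq:ffBBS-U} is $M(1,0)-M(0,2)=1\ne0$, so no finitely supported field satisfies it verbatim.
\end{itemize}

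The form that your substitution does identify with \eqref{eq:Yura-S-variable} is $\widehat U_{n}^{t+1}=M(0,-L)-M\bigl(\widehat U_{n}^{t}-L,\sum_{i\le n-1}(\widehat U_{i}^{t+1}-\widehat U_{i}^{t})\bigr)$. This is the analogue of $\min(L-U,s)=-\max(U-L,-s)$. So either let this step rest on the citation, as the paper does, or carry out your check against this form. As you wrote it, the ``formal'' verification against the displayed equation would not go through.
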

\begin{proof}
We reduce Theorem~\ref{thm:window-counting} modulo \(3\). Since \(M(0,b-1)=1\) exactly when \(b=2\) in \(\F_{3}\), the theorem gives
\[
D(\xi)\equiv M(0,W(\xi)-1)\pmod3.
\]
By the definition of \(\widehat S\),
\[
D(\xi)
\equiv
\widehat S(\xi)-\widehat S(\xi-1)
\pmod3,
\]
and
\[
W(\xi)
\equiv
\widehat S(\xi+K)-\widehat S(\xi-\Omega)
\pmod3.
\]
Therefore, in \(\F_{3}\),
\[
\widehat S(\xi)-\widehat S(\xi-1)
=
M\left(
0,
\widehat S(\xi+K)-\widehat S(\xi-\Omega)-1
\right),
\]
which is \eqref{eq:wave-S}. By the correspondence between the \(U\)-variable equation and the \(S\)-variable equation under \eqref{eq:USG-transform}, established in \cite{Yura2014}, the field \(\widehat U_{n}^{t}\) satisfies \eqref{eq:ffBBS-U} and hence gives the finite-field BBS solution.

Since \(\sigma(\xi)=0\) for \(\xi<0\) and \(\sigma(\xi)=R\) for \(\xi>d\), the profile \(\widehat S\) is constant on both exterior half-lines. Hence \(\widehat U(\xi)=0\) for \(\xi<0\) or \(\xi>d+K\), which proves the support inclusion. Finally, the coordinate \(\xi=Kn-\Omega t\) gives the velocity \(\Omega/K\).
\end{proof}

Figure~\ref{fig:three-soliton-collision} shows the cases \(h=0,1,2\). In each free-propagation region, the corresponding profile has the form
\[
\widehat U_{n}^{t}=\widehat U(Kn-\Omega t)
\]
and travels with velocity \(\Omega/K\). The collision behavior shown in the figure is not used in the proof.

\subsection{Tau-function Realization}
\begin{proposition}[Traveling-wave tau function]\label{prop:traveling-tau}
Define
\[
G(\xi):=0
\qquad
(0\le\xi\le K).
\]
Then
\begin{equation}\label{eq:global-traveling-tau}
G(\xi+\Omega)-G(\xi)=\sigma(\xi)
\end{equation}
determines a unique integer-valued function \(G\) on \(\Z\). Moreover,
\begin{equation}\label{eq:tau-front-arrangement}
G((h-\ell)\Omega+j)=G_{\ell}(K+j)
\end{equation}
for \(0\le\ell\le h\) and \(0\le j\le K\).
\end{proposition}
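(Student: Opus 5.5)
Existence and uniqueness come first. The relation \eqref{eq:global-traveling-tau} has step \(\Omega=K+1\), so each residue class modulo \(\Omega\) meets the initial window \(\{0,1,\ldots,K\}\) in exactly one point. On each residue class we can therefore propagate forward with \(G(\xi+\Omega)=G(\xi)+\sigma(\xi)\) and backward with \(G(\xi-\Omega)=G(\xi)-\sigma(\xi-\Omega)\). This defines \(G\) on all of \(\Z\), and the values are integers because \(\sigma\) is integer-valued. The recursion determines every value from the window data, which gives uniqueness.

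For the front arrangement \eqref{eq:tau-front-arrangement}, I will induct on \(h-\ell\), going down from \(\ell=h\) to \(\ell=0\).

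\textbf{Base case \(\ell=h\).} The claim is \(G(j)=G_{h}(K+j)=0\) for \(0\le j\le K\). For \(0\le\mu\le 2K\), Lemma~\ref{lem:fixed-pi} shows that \(C_{h+1}^{-1}(\mu)\) has length \(h+1\). Hence \(\pi^{a}(\mu)=0\) for \(a\ge h+1\), and so \(G_{h}(\mu)=0\). Since \(K+j\le 2K\), this covers every \(j\) in the window, and the base case agrees with the normalization.

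\textbf{Inductive step.} Assume \eqref{eq:tau-front-arrangement} holds at level \(\ell\ge1\), and set \(\xi=(h-\ell)\Omega+j\). Then \(\xi\) lies in the front range, and \eqref{eq:front-profile} gives \(\sigma(\xi)=\pi^{\ell}(K+j)\). Therefore
\[
G((h-\ell+1)\Omega+j)=G(\xi)+\sigma(\xi)=G_{\ell}(K+j)+\pi^{\ell}(K+j)=G_{\ell-1}(K+j),
\]
where the last equality is the first row-difference identity of Proposition~\ref{prop:tau-row-differences}. Since \((h-\ell+1)\Omega+j=(h-(\ell-1))\Omega+j\), this is exactly the statement at level \(\ell-1\). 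The identity is used only for \(\ell\ge1\), where that proposition applies.

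I expect no real obstacle here. The only points to check are that the front blocks are indexed consistently (the block for row \(\ell\) starts at \((h-\ell)\Omega\) and has length \(\Omega\)) and that the base case relies on the fixed-length bound \(K+j\le 2K\) from Lemma~\ref{lem:fixed-pi}.
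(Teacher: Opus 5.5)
Your proposal is correct and matches the paper's proof: residue-class propagation modulo $\Omega=K+1$ gives existence and uniqueness, and descending induction on $\ell$ gives the front formula. The base case uses $\pi^{a}(K+j)=0$ for $a\ge h+1$; Lemma~\ref{lem:fixed-pi} gives $\pi^{h+1}(K+j)=0$, and $\pi(0)=0$ handles larger $a$. The inductive step combines \eqref{eq:front-profile} with the first row-difference identity of Proposition~\ref{prop:tau-row-differences}.
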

\begin{proof}
Since \(\Omega=K+1\), the interval \(0\le\xi\le K\) contains one representative of each residue class modulo \(\Omega\). Together with these initial values, equation~\eqref{eq:global-traveling-tau} determines \(G\) uniquely in both directions on every residue class.

We prove \eqref{eq:tau-front-arrangement} by descending induction on \(\ell\). For \(\ell=h\), one has \(G(j)=0=G_{h}(K+j)\), because \(\pi^{a}(K+j)=0\) for \(a\ge h+1\). Suppose that \(G((h-\ell)\Omega+j)=G_{\ell}(K+j)\). Then
\[
\begin{aligned}
G((h-(\ell-1))\Omega+j)
&=G((h-\ell)\Omega+j)+\sigma((h-\ell)\Omega+j)\\
&=G_{\ell}(K+j)+\pi^{\ell}(K+j)\\
&=G_{\ell-1}(K+j),
\end{aligned}
\]
where the last equality follows from Proposition~\ref{prop:tau-row-differences}. This proves the front formula.
\end{proof}

Define
\[
\widehat G(\xi)\equiv G(\xi)\pmod3,
\qquad
\widehat G_{n}^{t}:=\widehat G(Kn-\Omega t).
\]
Reducing \eqref{eq:global-traveling-tau} modulo \(3\), we obtain
\[
\widehat S(\xi)=\widehat G(\xi+\Omega)-\widehat G(\xi).
\]
Hence
\[
\widehat S_{n}^{t}=\widehat G_{n}^{t-1}-\widehat G_{n}^{t},
\]
in agreement with \eqref{eq:USG-transform}.

\subsection{The Case \(h=2\)}
\begin{example}[A traveling-wave tau function at depth \(h=2\)]\label{ex:tau-h2}
Let \(h=2\), so that \(K=7\), \(\Omega=8\), \(R=18\), and \(d=36\). Proposition~\ref{prop:traveling-tau} gives \(G(\xi)=0\) for \(0\le\xi\le7\). Its front formula gives
\[
\begin{aligned}
G(j)&=G_{2}(7+j),\\
G(8+j)&=G_{1}(7+j),\\
G(16+j)&=G_{0}(7+j),
\end{aligned}
\qquad
0\le j\le7.
\]
Evaluating these rows, we obtain
\[
\begin{aligned}
(G(\xi))_{\xi=0}^{7}&=(0,0,0,0,0,0,0,0),\\
(G(\xi))_{\xi=8}^{15}&=(1,1,1,1,1,1,1,2),\\
(G(\xi))_{\xi=16}^{23}&=(4,4,4,5,5,5,6,8).
\end{aligned}
\]
The corresponding forward \(8\)-differences are
\[
\begin{aligned}
(\sigma(\xi))_{\xi=0}^{7}&=(1,1,1,1,1,1,1,2),\\
(\sigma(\xi))_{\xi=8}^{15}&=(3,3,3,4,4,4,5,6),\\
(\sigma(\xi))_{\xi=16}^{23}&=(7,8,9,10,11,12,13,14).
\end{aligned}
\]
Using \(G(\xi+8)=G(\xi)+\sigma(\xi)\), the next block is
\[
(G(\xi))_{\xi=24}^{31}=(11,12,13,15,16,17,19,22).
\]
The reflected part is obtained from \(\sigma(\xi)+\sigma(36-\xi)=18\), and \(\sigma(\xi)=18\) for \(\xi>36\). Since \(G\) is zero on \(0\le\xi\le7\) and \(\sigma\) vanishes on the left exterior, \(G(\xi)=0\) for \(\xi<0\). For every \(\xi>d\),
\[
G(\xi+8)-G(\xi)=18.
\]
Consequently, on each residue-class tail contained in the right exterior,
\[
G(\xi)=\frac94\xi+p(\xi),
\]
where \(p\) is \(8\)-periodic on that right-exterior tail. Thus the average slope is \(9/4\). Figure~\ref{fig:tau-h2} shows \(G\) and its forward \(8\)-difference.

For comparison, the standard box--ball one-soliton tau function is \(\max(0,\xi)\) up to normalization \cite{TokihiroEtAl1996}. Both functions are constant on the left. On the right, the box--ball tau function is linear, whereas \(G\) is affine up to an \(8\)-periodic correction. The transition region of \(G\) is a finite staircase determined by the Mersenne tau rows. This is only a qualitative comparison; no limiting relation is asserted.
\end{example}

\begin{figure}[t]
  \centering
  \begin{minipage}[t]{0.48\textwidth}
    \centering
    \includegraphics[width=\textwidth]{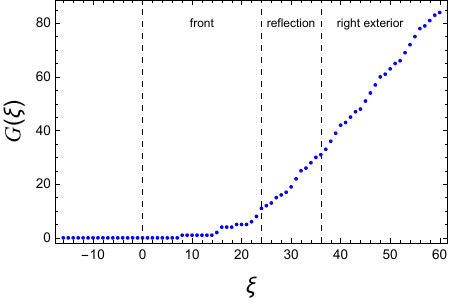}
    \smallskip
    (a)
  \end{minipage}
  \hfill
  \begin{minipage}[t]{0.49\textwidth}
    \centering
    \includegraphics[width=\textwidth]{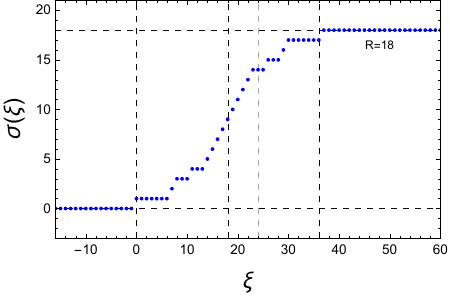}
    \smallskip
    (b)
  \end{minipage}
  \caption{The traveling-wave tau function and its forward \(\Omega\)-difference at depth \(h=2\). (a) \(G(\xi)\); (b) the integer profile \(\sigma(\xi)=G(\xi+8)-G(\xi)\). Here \(K=7\), \(\Omega=8\), \(R=18\), and \(d=36\). The vertical dashed lines separate the front range, the reflected part, and the right exterior.}
  \label{fig:tau-h2}
\end{figure}

\section{Conclusion}
We studied the Mersenne representation through its binary and nonbinary sides. The successor structure of the nonbinary language gives A055938 and proves the conjectured relation with A080578. The binary-side counting function is a shifted form of the Conolly sequence. Thus these sequences arise from a common word structure.

The counting function gives a parent map that is conjugate to deletion of the lowest Mersenne digit. Its iterates organize the nonnegative integers into truncation blocks and provide a quotient-remainder decomposition adapted to the Mersenne representation. The associated Mersenne tau function integrates the parent orbit. Its row differences recover the parent iterates and the counting function, and they give digit-reconstruction and diagonal-defect formulas. At a truncation-block boundary, the predecessor is mapped to the successor value \(y_{r}\), which connects the block structure with the successor orbit.

The finite-depth one-soliton family of the finite-field BBS was established in \cite{Yura2014}. Here we gave a different reconstruction based on the Mersenne representation. The parent iterates determine the front of a global integer-valued traveling-wave profile, and reflection completes the profile on the whole integer line. We then proved an integer window-counting theorem. In this proof, the diagonal tau defect controls the regular core, the endpoint parent values give the required formulas at the core endpoints, and the complement symmetry of \(Z\) treats the reflection seam. Reduction modulo \(3\) recovers the known finite-field traveling-wave solutions. The construction illustrates how the combinatorics of a number representation can enter both the reconstruction of a solution and the proof of its evolution equation.

We also constructed an integer-valued traveling-wave tau function whose forward \(\Omega\)-difference is the integer profile and whose front values are given by the Mersenne tau rows. Its reduction modulo \(3\) gives the corresponding finite-field tau variable.

It remains to determine whether this reconstruction extends to interacting or multisoliton solutions and whether a corresponding integer window-counting mechanism persists through collisions. Another natural direction is to replace the Mersenne weights \(2^{k}-1\) by the \(q\)-integer weights
\[
[k]_{q}=\frac{q^{k}-1}{q-1}.
\]
A natural base-\(3\) analogue of A005187 is A004128 \cite{OEIS}. It would be useful to determine which parts of the successor, parent, truncation, and tau-function structures persist in this setting.

\section*{Acknowledgements}
This work was supported by the Research Institute for Mathematical Sciences, an International Joint Usage/Research Center located in Kyoto University.  This work was also supported by JSPS KAKENHI Grant Number JP23K03233.

\end{document}